\newcolumntype{C}[1]{>{\centering\arraybackslash$}p{#1}<{$}}
\def\<{\langle}
\def\>{\rangle}
\def\N{{\mathbb N}}
\def\Z{{\mathbb Z}}
\theoremstyle{plain}
\newtheorem{theorem}{Theorem}[section]
\newaliascnt{lemma}{theorem}
\newtheorem{lemma}[lemma]{Lemma}
\newaliascnt{proposition}{theorem}
\newtheorem{proposition}[proposition]{Proposition}
\newaliascnt{corollary}{theorem}
\newtheorem{corollary}[corollary]{Corollary}
\newaliascnt{conjecture}{theorem}
\newtheorem{conjecture}[conjecture]{Conjecture}
\theoremstyle{remark}
\newaliascnt{claim}{theorem}
\newtheorem*{claim*}{Claim}
\newtheorem*{remark}{Remark}
\theoremstyle{definition}
\newaliascnt{definition}{theorem}
\newtheorem{definition}[definition]{Definition}
\newaliascnt{example}{theorem}
\newtheorem{example}[example]{Example}
\newaliascnt{notation}{theorem}
\newcommand{\colorV}[1]{{\color{blue}#1}}
\newcommand{\colorJ}[1]{{\color{red} #1}}
\newcommand{\colorM}[1]{{\color[rgb]{0,0.7,0} #1}}
\newcommand{\colorB}[1]{{\color[rgb]{0.8,0.2,0} #1}}
\newcommand{\commV}[1]{\marginpar{\tiny\vskip-3ex\colorV{V: #1}}}
\newcommand{\commJ}[1]{\marginpar{\tiny\vskip-3ex\colorJ{J: #1}}}
\newcommand{\commM}[1]{\marginpar{\tiny\vskip-3ex\colorM{M: #1}}}
\newcommand{\commB}[1]{\marginpar{\tiny\vskip-3ex\colorB{B: #1}}}
\newcommand{\comment}[1]{\marginpar{\tiny\vskip-3ex\color[rgb]{0.5,0.5,0.5} #1}}
\newcommand{\colorV}[1]{#1}
\newcommand{\colorJ}[1]{#1}
\newcommand{\colorM}[1]{#1}
\newcommand{\colorB}[1]{#1}
\newcommand{\commV}[1]{}
\newcommand{\commJ}[1]{}
\newcommand{\commM}[1]{}
\newcommand{\commB}[1]{}
\newcommand{\comment}[1]{}
\title{On parabolic subgroups of Artin--Tits groups\\of spherical type}
\author{\colorM{Mar\'{\i}a Cumplido}, \colorV{Volker Gebhardt}, \colorJ{Juan Gonz\'alez-Meneses}\\ and \colorB{Bert Wiest}\footnote{Partially supported by a PhD contract funded by University of Rennes 1, by the Ministry of Economy and Competitiveness of Spain and FEDER through the projects MTM2013-44233-P, MTM2016-76453-C2-1-P, by the Ministry of Education, Culture and Sports of Spain and the French Embassy in Spain through the mobility program ``M\'{e}rim\'{e}e 2015" DCT2015/0002, and by Western Sydney University through a honorary academic appointment as Visiting Professor. Part of this work was done during a visit of the third author to Western Sydney University, and visits of the first, third and fourth authors to the University of Seville and the University of Rennes 1.}}
\date{June 19, 2019}
\begin{document}

\maketitle

%|<------------------------------------------------------------------------>|

\begin{abstract}
We show that, in an Artin--Tits group of spherical type, the intersection of two parabolic subgroups is a parabolic subgroup.
Moreover, we show that the set of parabolic subgroups forms a lattice with respect to inclusion.
This extends to all Artin--Tits groups of spherical type a result that was previously known for braid groups.

To obtain the above results, we show that every element in an Artin--Tits group of spherical type admits a unique minimal parabolic subgroup containing it, which we call its parabolic closure.
We also show that the parabolic closure of an element coincides with the parabolic closure of any of its powers or roots.
As a consequence, if an element belongs to a parabolic subgroup, all its roots belong to the same parabolic subgroup.

We define the simplicial complex of irreducible parabolic subgroups, and we propose it as the analogue, in Artin--Tits groups of spherical type, of the celebrated complex of curves which is an important tool in braid groups, and more generally in mapping class groups.
We conjecture that the  complex of irreducible parabolic subgroups is $\delta$-hyperbolic.
\end{abstract}

\section{Introduction}\label{S:Introduction}

Artin--Tits groups are a natural generalization of braid groups from the algebraic point of view:
In the same way that the braid group can be obtained from the presentation of the symmetric group with transpositions as generators by dropping the order relations for the generators, other Coxeter groups give rise to more general Artin--Tits groups.
If the underlying Coxeter group is finite, the resulting Artin--Tits group is said to be of \emph{spherical type}.

Artin--Tits groups of spherical type share many properties with braid groups.
For instance, they admit a particular algebraic structure, called \emph{Garside structure}, which allows to define normal forms, solve the word and conjugacy problems, and prove some other algebraic properties (like torsion-freeness).

However, some properties of braid groups are proved using topological or geometrical techniques, since a braid group can be seen as the fundamental group of a configuration space, and also as a mapping class group of a punctured disc.
As one cannot replicate these topological or geometrical techniques in other Artin--Tits groups, they must be replaced by algebraic arguments, if one tries to extend properties of braid groups to all Artin--Tits groups of spherical type.

In this paper we will deal with parabolic subgroups of Artin--Tits groups.
A parabolic subgroup is by definition the conjugate of a subgroup generated by a subset of the standard generators.
The irreducible parabolic subgroups, as we will see, are a natural algebraic analogue of the isotopy classes of non-degenerate simple closed curves in the punctured disc.

The isotopy classes of simple closed curves, in turn, are the building blocks that form the well-known {\it complex of curves}.
The properties of this complex, and the way in which the braid group acts on it, allow to use geometric arguments to prove important results about braid groups (Nielsen--Thurston classification, structure of centralizers, etc.).
Similarly, the set of irreducible parabolic subgroups also forms a simplicial complex, which we call the {\em complex of irreducible parabolic subgroups}.
We conjecture that the geometric properties of this complex, and  the way in which an Artin--Tits group acts on it, will allow to extend many of the mentioned properties to all Artin--Tits groups of spherical type.
The results in the present paper should help to lay the foundations for this study.
\bigskip

In this paper we will show the following:

\begin{theorem}\label{T:minimal_parabolic_containing_an_element}
(see \autoref{P:minimalContainingParabolic}) Let $A_S$ be an Artin--Tits group of spherical type, and let $\alpha\in A_S$. There is a unique parabolic subgroup~$P_\alpha$ which is minimal (by inclusion) among all parabolic subgroups containing~$\alpha$. We call it the {\it parabolic closure} of~$\alpha$.
\end{theorem}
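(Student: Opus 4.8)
The plan is to show that the intersection of all parabolic subgroups containing $\alpha$ is itself parabolic; since it is obviously contained in every parabolic subgroup containing $\alpha$, it will be the unique minimal one. The main tool should be the (presumably already established) fact that the intersection of *two* parabolic subgroups is a parabolic subgroup — but one cannot naively iterate this infinitely many times, so the real content is a finiteness/descending-chain argument. The key invariant to control the process is the *rank* of a parabolic subgroup (the number of standard generators in a conjugate of it that is a standard parabolic), or equivalently, something like the dimension of the associated Coxeter cell. A standard parabolic $A_X$ for $X \subseteq S$ has rank $|X|$, and rank is bounded above by $|S|$.

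First I would fix $\alpha \in A_S$ and consider the family $\mathcal{P}_\alpha$ of all parabolic subgroups containing $\alpha$; it is nonempty since $A_S = A_S \cdot 1 \cdot A_S^{-1}$ is itself parabolic and contains $\alpha$. Among $\mathcal{P}_\alpha$, choose one, say $P$, of minimal rank. I claim $P$ is contained in every $Q \in \mathcal{P}_\alpha$. Indeed, by the intersection theorem, $P \cap Q$ is a parabolic subgroup, and it still contains $\alpha$, so $P \cap Q \in \mathcal{P}_\alpha$. Now I need: a parabolic subgroup contained in $P$ has rank at most $\operatorname{rank}(P)$, with equality only if it equals $P$. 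Granting this, $\operatorname{rank}(P \cap Q) \le \operatorname{rank}(P)$, and minimality of $\operatorname{rank}(P)$ forces equality, hence $P \cap Q = P$, i.e. $P \subseteq Q$. Therefore $P$ is the required unique minimal element $P_\alpha$.

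The main obstacle is the monotonicity statement I just used: if $P' \subseteq P$ are both parabolic, then $\operatorname{rank}(P') \le \operatorname{rank}(P)$, with equality forcing $P' = P$. This needs to be handled carefully because rank is defined up to conjugacy, and a subgroup being parabolic is not an *a priori* visible condition inside $P$. The clean way is to conjugate so that $P = A_X$ is standard, and then show that a parabolic subgroup of $A_S$ that happens to be contained in $A_X$ is in fact a parabolic subgroup *of $A_X$* (here one uses that $A_X$ is itself an Artin--Tits group of spherical type, and a convexity/retraction property: the standard parabolic $A_X$ is convex, or there is a retraction $A_S \to A_X$, so intersecting data survives). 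Then a parabolic of $A_X$ is a conjugate-in-$A_X$ of some $A_Y$ with $Y \subseteq X$, giving rank $|Y| \le |X|$, with equality iff $Y = X$ iff the subgroup is all of $A_X$. I would either cite an earlier lemma giving this "parabolic-in-$A_X$ equals parabolic-of-$A_S$ inside $A_X$" statement, or prove it via the Garside-theoretic or normal-form description of parabolic subgroups that the paper presumably sets up; either way, the fact that $|S| < \infty$ is what makes the minimal-rank choice legitimate.
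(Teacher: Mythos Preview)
Your argument is circular in the logical structure of this paper. You want to take ``the intersection of two parabolic subgroups is parabolic'' as a black box, but that statement is \autoref{T:intersection_of_parabolic_is_parabolic}, and its proof \emph{uses} the parabolic closure $P_\alpha$ of an element --- exactly the object whose existence you are trying to establish. Indeed, the very first nontrivial step in the proof of \autoref{T:intersection_of_parabolic_is_parabolic} is to pick a nontrivial $\alpha\in P\cap Q$ and invoke \autoref{T:minimal_parabolic_containing_an_element} to get $P_\alpha\subseteq P\cap Q$. So your proposal inverts the dependency order of the paper.

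The paper's route is direct and constructive, not via intersections: one conjugates $\alpha$ into the set $RSSS_\infty(\alpha)$ (the elements lying in the reduced super summit set for \emph{every} Garside structure $(A_S,A_S^+,\Delta_S^N)$), reads off the support $Z$ of such a conjugate $\alpha'$, and sets $P_\alpha$ to be the corresponding conjugate of $A_Z$. Well-definedness is the hard part: one shows, via the transport map and an analysis of conjugating elements for iterated twisted cycling (\autoref{C:conjugated_conjugating_elements_twisted}), that any $g$ conjugating one element of $RSSS_\infty(\alpha)$ to another also conjugates the associated standard parabolic subgroups, reducing to the positive case handled in \autoref{C:conjugators_of_positive_elements}. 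Minimality is then comparatively easy: given any parabolic $P\ni\alpha$, conjugate $P$ to a standard $A_X$ and apply iterated twisted cycling and decycling \emph{inside $A_X$} to land in $RSSS_\infty(\alpha)\cap A_X$, whence the support is contained in $X$.

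Once all this is done, the paper does observe (\autoref{R:minimalContainingParabolic}) that $P_\alpha$ equals the intersection of all parabolics containing $\alpha$ --- essentially your argument --- but only as a corollary, after \autoref{T:intersection_of_parabolic_is_parabolic} has been proved. A secondary issue: your rank-monotonicity step (a parabolic of $A_S$ contained in $A_X$ is a parabolic of $A_X$, hence of rank $\le |X|$) is not as innocent as it looks and is not set up independently in the paper; the paper uses the weaker fact that strict inclusion of parabolics forces strict rank inequality only late, in the proof of \autoref{P:closedPredicate}, and again after the main machinery is in place.
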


The name {\it parabolic closure} comes from the theory of Coxeter groups.
In a Coxeter group, the parabolic closure of a set is the intersection of all parabolic subgroups containing the set.
It is known~\cite{Qi} that, if the Coxeter group has finite rank, this intersection is a parabolic subgroup, so it is the smallest parabolic subgroup (by inclusion) containing the set.

We will also show that the parabolic closure of an element coincides with the parabolic closure of any of its powers and roots:

\medskip
{\bf \autoref{T:parabolic_for_powers_and_roots}.}
{\em Let $A_S$ be an Artin--Tits group of spherical type. If $\alpha\in A_S$ and $m$ is a nonzero integer, then $P_{\alpha^m}=P_\alpha$.}
\medskip

The above result will have an interesting consequence:

\medskip
{\bf \autoref{C:roots_in_parabolic}.}
{\em Let $A_S$ be an Artin--Tits group of spherical type. If $\alpha$ belongs to a parabolic subgroup~$P$, and $\beta\in A_S$ is such that $\beta^m=\alpha$ for some nonzero integer~$m$, then $\beta\in P$.}
\medskip

Finally, we will use~\autoref{T:minimal_parabolic_containing_an_element} to show the following results, which describe the structure of the set of parabolic subgroups with respect to the partial order determined by inclusion.

\medskip
{\bf \autoref{T:intersection_of_parabolic_is_parabolic}.}
{\em Let~$P$ and~$Q$ be two parabolic subgroups of an Artin--Tits group~$A_S$ of spherical type. Then $P\cap Q$ is also a parabolic subgroup.}
\medskip

{\bf \autoref{T:lattice}.}
{\em The set of parabolic subgroups of an Artin--Tits group of spherical type is a lattice with respect to the partial order determined by inclusion.}
\medskip

{\bf Acknowledgements:} The authors thank Luis Paris for his interesting suggestions.

\section{Complex of irreducible parabolic subgroups}\label{S:ComplexOfParabolics}

An Artin--Tits group $A_S$ is a group generated by a finite set~$S$, that admits a presentation with at most one relation for each pair of generators $s,t\in S$ of the form $sts\cdots = tst\cdots$, where the length (which may be even or odd) of the word on the left hand side is equal to the length of the word on the right hand side. We will denote this length $m(s,t)$, and we will say, as usual, that $m(s,t)=\infty$ if there is no relation involving $s$ and $t$. We can also assume that $m(s,t)>1$, otherwise we could just remove one generator.
The two sides of the relation $sts\cdots = tst\cdots$ are expressions for the least common multiple of the generators~$s$ and~$t$ with respect to the prefix and suffix partial orders (cf.\ \autoref{S:Garside}).

The given presentation of~$A_S$ is usually described by a labeled graph~$\Gamma_S$ (called the {\it Coxeter graph} of~$A_S$) whose vertices are the elements of~$S$, in which there is an edge joining two vertices $s$ and $t$ if and only if $m(s,t)>2$. If $m(s,t)>3$ we write $m(s,t)$ as a label on the corresponding edge.

Given an Artin--Tits group~$A_S$, where~$S$ is the standard set of generators and~$\Gamma_S$ is the associated Coxeter graph, we say that~$A_S$ is \emph{irreducible} if~$\Gamma_S$ is connected.
For a subset~$X$ of~$S$, the subgroup~$A_X$ of~$A_S$ generated by~$X$ is called a \emph{standard parabolic subgroup} of~$A_S$; it is isomorphic to the Artin--Tits group associated to the subgraph~$\Gamma_X$ of~$\Gamma_S$ spanned by~$X$~\cite{vanderLek}.
A subgroup~$P$ of~$A_S$ is a \emph{parabolic subgroup} of~$A_S$ if it is conjugate to a standard parabolic~$A_X$ subgroup of~$A_S$; we say that~$P$ is \emph{irreducible} if~$A_X$ is irreducible.

If one adds the relations $s^2=1$ for all $s\in S$ to the standard presentation of an Artin--Tits group~$A_S$, one obtains its corresponding Coxeter group~$W_S$. If~$W_S$ is a finite group, then~$A_S$ is said to be of {\it spherical type}.
Artin--Tits groups of spherical type are completely classified~\cite{Coxeter},
and they are known to admit a Garside structure, as we will see in \autoref{S:Garside}.

The main example of an Artin--Tits group of spherical type is the braid group on $n$~strands, $B_n$, which is generated by $n-1$ elements, $\sigma_1,\ldots, \sigma_{n-1}$, with defining relations $\sigma_i\sigma_j=\sigma_j\sigma_i$ (if $|i-j|>1$) and $\sigma_i\sigma_j\sigma_i=\sigma_j\sigma_i\sigma_j$ (if $|i-j|=1$).
Its corresponding Coxeter group is the symmetric group of $n$ elements,~$\Sigma_n$.

The braid group~$B_n$ can be seen as the group of orientation-preserving automorphisms of the $n$-times punctured disc~$D_n$, fixing the boundary pointwise, up to isotopy.
That is, $B_n$ is the mapping class group of~$D_n$.
As a consequence, $B_n$ acts naturally on the set of isotopy classes of non-degenerate, simple closed curves in~$D_n$. (Here {\it non-degenerate} means that the curve encloses more than one and less than $n$~punctures; {\it simple} means that the curve does not cross itself.)
The (isotopy classes of) curves form a simplicial complex, called the {\it complex of curves}, as follows: A simplex of dimension~$d$ is a set of $d+1$ (isotopy classes of) curves which admit a realisation consisting of $d+1$ mutually disjoint curves.
The 1-skeleton of the complex of curves is called the {\it graph of curves}, in which the vertices are the (isotopy classes of) curves, and two vertices are connected by an edge if and only if the corresponding isotopy classes of curves can be represented by two disjoint curves.

Our goal is to extend the notion of {\it complex of curves} from the braid group~$B_n$ to all Artin--Tits groups of spherical type.
Hence, we need to find some algebraic objects which can be defined for all Artin--Tits groups of spherical type, and which correspond to isotopy classes of non-degenerate simple closed curves in the case of~$B_n$.
We claim that the {\it irreducible parabolic subgroups} are such objects.

We can define a correspondence:
$$
    \varphi:\: \left\{\begin{array}{c}\mbox{Isotopy classes of} \\ \mbox{non-degenerate simple} \\ \mbox{closed curves in }D_n\end{array}\right\} \longrightarrow \left\{\begin{array}{c}\mbox{Proper} \\ \mbox{irreducible parabolic} \\ \mbox{subgroups of }B_n \end{array}\right\}
$$
so that, for a curve~$C$, its image~$\varphi(C)$ is the set of braids that can be represented by an automorphism of~$D_n$ whose support is enclosed by~$C$.

Let us see that the image of~$\varphi$ consists of proper irreducible parabolic subgroups.
Suppose that~$D_n$ is represented as a subset of the complex plane, whose boundary is a circle, and its punctures correspond to the real numbers $1,\ldots,n$.
Let~$C_m$ be a circle enclosing the punctures $1,\ldots, m$ where $1<m<n$. Then~$\varphi(C_m)$ is a subgroup of~$B_n$ which is naturally isomorphic to~$B_m$.
Actually, it is the standard parabolic subgroup~$A_{X_m}$, where $X_m=\{\sigma_1,\ldots,\sigma_{m-1}\}$.
More generally, if we consider a non-degenerate simple closed curve~$C$ in~$D_n$, we can always find an automorphism~$\alpha$ of~$D_n$ such that $\alpha(C)=C_m$ for some $1<m<n$.
This automorphism~$\alpha$ represents a braid, and it follows by construction that~$\varphi(C)$ is precisely the irreducible parabolic subgroup $\alpha A_{X_m} \alpha^{-1}$, which is proper as $1<m<n$.

The correspondence~$\varphi$ is surjective as, given an irreducible parabolic subgroup $P=\alpha A_X \alpha^{-1}$, we can take a circle~$C'$ in~$D_n$ enclosing the (consecutive) punctures which involve the generators in~$X$, and it follows that $\varphi(\alpha^{-1}(C'))=P$.
The injectivity of~$\varphi$ will be shown later as a consequence of \autoref{L:parabolic=central_element}.

Therefore, instead of talking about curves, in an Artin--Tits group~$A_S$ of spherical type we will talk about irreducible parabolic subgroups. The group~$A_S$ acts (from the right) on the set of parabolic subgroups by conjugation. This action corresponds to the action of braids on isotopy classes of non-degenerate simple closed curves.

Now we need to translate the notion of curves being ``disjoint'' (the adjacency condition in the complex of curves) to the algebraic setting. It is worth mentioning that disjoint curves do not necessarily correspond to irreducible parabolic subgroups with trivial intersection.
Indeed, two disjoint nested curves correspond to two subgroups with non-trivial intersection, one containing the other.
Conversely, two irreducible parabolic subgroups with trivial intersection may correspond to non-disjoint curves.
(For instance, the curves corresponding to the two cyclic subgroups of~$B_n$ generated by~$\sigma_1$ and~$\sigma_2$ respectively intersect.)

One simple algebraic translation of the notion of ``disjoint curves" is the following: Two distinct irreducible parabolic subgroups~$P$ and~$Q$ of~$B_n$ correspond to disjoint curves if and only if one of the following conditions is satisfied:
\begin{enumerate}
 \item $P\subsetneq Q$,
 \item $Q\subsetneq P$,
 \item $P\cap Q=\{1\}$ and $pq=qp$ for every $p\in P$, $q\in Q$.
\end{enumerate}
This can be deduced easily using geometrical arguments, but it will also follow from the forthcoming results in this section. Hence, we can say that two irreducible parabolic subgroups~$P$ and~$Q$ in an Artin--Tits group~$A_S$ are {\it adjacent} (similarly to what we say in the complex of curves) if~$P$ and~$Q$ satisfy one of the three conditions above, that is, if either one is contained in the other, or they have trivial intersection and commute.

However, this characterization is not completely satisfactory, as it contains three different cases. Fortunately, one can find a much simpler equivalent characterization by considering a special element for each parabolic subgroup, as we will now see.

If~$P$ is an irreducible parabolic subgroup of an Artin--Tits group of spherical type, we saw that~$P$ is itself an irreducible Artin--Tits group of spherical type.
Hence, the center of~$P$ is cyclic, generated by an element~$z_P$.
To be precise, there are two possible elements ($z_P$ and~$z_P^{-1}$) which could be taken as generators of the center of~$P$, but we take the unique one which is conjugate to a {\it positive} element of~$A_S$ (an element which is a product of standard generators).
Hence, the element~$z_P$ is determined by~$P$.
We shall see in \autoref{L:parabolic=central_element} (case $x=1$), that conversely, the element~$z_P$ determines the subgroup~$P$.
%Conversely, as two distinct irreducible parabolic subgroups have distinct centers (we will show this fact later), the element~$z_P$ determines the subgroup~$P$.

If~$P$ is standard, that is, if~$P=A_X$ for some~$X\subseteq S$, we will write~$z_X=z_{A_X}$.
It turns out that, using the standard Garside structure of~$A_S$, one has $z_X=(\Delta_X)^{e}$, where~$\Delta_X$ is the least common multiple of the elements of~$X$ and $e\in\{1,2\}$. Namely,~$e=1$ if~$\Delta_X$ is central in~$A_X$, and~$e=2$ otherwise~\cite{BS}.

If a standard parabolic subgroup~$A_X$ is not irreducible, that is, if~$\Gamma_X$ is not connected, then $X=X_1\sqcup\cdots \sqcup X_r$, where each~$\Gamma_{X_i}$ corresponds to a connected component of~$\Gamma_X$, and $A_{X}=A_{X_1}\times \cdots \times A_{X_r}$.
In this case we can also define~$\Delta_X$ as the least common multiple of the elements of~$X$, and it is clear that~$(\Delta_X)^e$ is central in~$A_X$ for either~$e=1$ or~$e=2$.
We will define~$z_X=\Delta_X$ if~$\Delta_X$ is central in~$A_X$, and~$z_X=\Delta_X^2$ otherwise. Notice that~$z_X$ is a central element in~$A_X$, but it is not a generator of the center of~$A_X$ if~$A_X$ is reducible, as in this case the center of~$A_X$ is not cyclic: It is the direct product of the centers of each component, so it is isomorphic to~$\mathbb Z^r$.

Now suppose that~$P$ is a parabolic subgroup, so $P=\alpha A_X \alpha^{-1}$ for some $X\subseteq S$.
We define $z_P=\alpha z_X \alpha^{-1}$.
This element~$z_P$ is well defined: If $P=\alpha A_X \alpha^{-1}=\beta A_Y\beta^{-1}$, then we have $\beta^{-1}\alpha A_X \alpha^{-1}\beta =A_Y$, so $\beta^{-1}\alpha z_X \alpha^{-1}\beta = z_Y$ hence $\alpha z_X \alpha^{-1} = \beta z_Y \beta^{-1}$.
Using results by Godelle~\cite{God} one can deduce the following:

%\begin{lemma}\label{L:standard_parabolic=central_element}{\rm \cite[Lemma 33]{Cum}} Given standard parabolic subgroups~$A_X$ and~$A_Y$ and $\alpha\in A_S$, one has $\alpha^{-1}A_X\alpha = A_Y$ if and only if $\alpha^{-1} z_X \alpha = z_Y$.
%\end{lemma}

\begin{lemma}\label{L:parabolic=central_element}{\rm \cite[Lemma~8]{Cum}}
Let~$P$ and~$Q$ be two parabolic subgroups of~$A_S$. Then, for every $x\in A_S$, one has $x^{-1}Px = Q$ if and only if $x^{-1} z_P x = z_Q$.
\end{lemma}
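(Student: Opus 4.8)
The plan is to prove the two implications separately, the forward one being essentially formal and the converse carrying all the weight. For the forward implication, suppose $x\inv P x = Q$. Writing $P = \alpha A_X \alpha\inv$, we get $Q = x\inv P x = (x\inv\alpha) A_X (x\inv\alpha)\inv$, so by the very definition of the distinguished central element and its well-definedness (established in the paragraph preceding the lemma) we obtain $z_Q = (x\inv\alpha) z_X (x\inv\alpha)\inv = x\inv z_P x$. No hypothesis on $x$ is used here.

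For the converse the first move is to reduce to the case $x = 1$. If $x\inv z_P x = z_Q$, set $P' := x\inv P x$; the forward implication, already proved, gives $z_{P'} = x\inv z_P x = z_Q$, so it suffices to know that the assignment $R \mapsto z_R$ is injective on the set of parabolic subgroups, for then $P' = Q$, i.e. $x\inv P x = Q$. Thus the whole statement reduces to the claim: \emph{if $P$ and $Q$ are parabolic subgroups with $z_P = z_Q$, then $P = Q$.} Conjugating $P$ and $Q$ simultaneously by a suitable element, we may assume $Q = A_Y$ is standard; writing $P = \beta A_X \beta\inv$, the hypothesis reads $\beta z_X \beta\inv = z_Y$, and the goal is $\beta A_X \beta\inv = A_Y$. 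This is where Godelle's results~\cite{God} on normalisers of, and transporters between, standard parabolic subgroups enter: the upshot of that theory is, roughly, that conjugacy of the distinguished elements $z_X$ and $z_Y$ forces $A_X$ and $A_Y$ to be conjugate, that it can be realised by a ``geometric'' conjugator $\gamma$ (a product of ribbon elements) which moreover satisfies $\gamma A_X \gamma\inv = A_Y$ and carries $z_X$ to $z_Y$, and that the centraliser of $z_Y$ is contained in the normaliser of $A_Y$. Granting this, $\beta\gamma\inv$ centralises $z_Y$, hence normalises $A_Y$, so $P = \beta A_X\beta\inv = (\beta\gamma\inv)(\gamma A_X\gamma\inv)(\beta\gamma\inv)\inv = (\beta\gamma\inv) A_Y (\beta\gamma\inv)\inv = A_Y$, as desired. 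The only elementary ingredient needed in addition is that distinct standard parabolic subgroups have distinct distinguished elements, which holds because the support of the positive element $\Delta_X$, and hence of $z_X$, is exactly $X$.

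The main obstacle is precisely this appeal to the structure theory of \cite{God}: showing that an a priori arbitrary element $\beta$ conjugating $z_X$ to $z_Y$ may be replaced by a ribbon $\gamma$ doing the same job, and that $\gamma$ is then forced to conjugate $A_X$ onto $A_Y$ rather than onto some other standard parabolic of the same isomorphism type, requires the full description of ribbon elements and of the normaliser $Z_{A_S}(z_Y) \supseteq N_{A_S}(A_Y)$ (in fact equality) coming from Godelle's work. Everything else — the reduction to $x = 1$, the reduction to a standard target, the bookkeeping of the exponent $e$ in $z_X = \Delta_X^{\,e}$, and the support computation — is routine. All of this is worked out in \cite[Lemma~8]{Cum}.
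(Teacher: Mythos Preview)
The paper does not actually prove this lemma: it is simply quoted as \cite[Lemma~8]{Cum}, with only the remark that it ``can be deduced'' from Godelle's results~\cite{God}. Your proposal supplies exactly such a deduction, and the outline is correct and matches the argument in~\cite{Cum}: the forward implication is formal from the well-definedness of $z_P$, the converse reduces (via the forward implication) to injectivity of $R\mapsto z_R$, and this in turn is handled by standardising one side and invoking Godelle's description of conjugators between standard parabolics and of the normaliser/centraliser $N_{A_S}(A_Y)=Z_{A_S}(z_Y)$.

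One small point of presentation: when you write ``$Z_{A_S}(z_Y) \supseteq N_{A_S}(A_Y)$ (in fact equality)'', the inclusion you display is the trivial one (already established by the forward direction), whereas the inclusion you actually \emph{use} in the next sentence is the opposite one, $Z_{A_S}(z_Y)\subseteq N_{A_S}(A_Y)$. It would be clearer to state that inclusion directly, since that is precisely the content borrowed from~\cite{God}.
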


It follows from the above lemma that, if we want to study elements which conjugate a parabolic subgroup~$P$ to another parabolic subgroup~$Q$, we can replace~$P$ and~$Q$ with the elements~$z_P$ and~$z_Q$. It will be much easier to work with elements than to work with subgroups. Moreover, in the case in which $P=Q$, the above result reads:
$$
   N_{A_S}(P)=Z_{A_S}(z_P),
$$
that is, the normalizer of~$P$ in~$A_S$ equals the centralizer of the element~$z_P$ in~$A_S$.

\begin{remark} We can use \autoref{L:parabolic=central_element} to prove that the correspondence~$\varphi$ is injective. In the case of braid groups, if~$C$ is a non-degenerate simple closed curve, and $P=\varphi(C)$ is its corresponding irreducible parabolic subgroup, the central element~$z_P$ is either a conjugate of a standard generator (if~$C$ encloses two punctures) or the Dehn twist along the curve~$C$ (if~$C$ encloses more than two punctures). Hence, if~$C_1$ and~$C_2$ are such that $\varphi(C_1)=\varphi(C_2)$ then either~$z_P$ or~$z_P^2$ is the Dehn twist along~$C_1$ and also the Dehn twist along~$C_2$. Two Dehn twists along non-degenerate curves correspond to the same mapping class if and only if their corresponding curves are isotopic~\cite[Fact 3.6]{FM}, hence~$C_1$ and~$C_2$ are isotopic, showing that~$\varphi$ is injective.
\end{remark}

\autoref{L:parabolic=central_element} allows us to simplify the {\it adjacency} condition for irreducible parabolic subgroups, using the special central elements~$z_P$:

\begin{theorem}\label{centers_commute=3_conditions}
Let~$P$ and~$Q$ be two distinct irreducible parabolic subgroups of an Artin--Tits group~$A_S$ of spherical type.
Then $z_Pz_Q=z_Qz_P$ holds if and only if one of the following three conditions is satisfied:
\begin{enumerate}

\item $P\subsetneq Q$.

\item $Q\subsetneq P$.

\item $P\cap Q=\{1\}$ and $xy=yx$ for every $x\in P$ and $y\in Q$.

\end{enumerate}
\end{theorem}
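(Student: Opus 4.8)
The statement is an ``if and only if'', and I would start with the easy forward implication. If $P\subsetneq Q$, then $z_P\in P\subseteq Q$, and since $z_Q$ lies in the centre of $Q$ it commutes with $z_P$; the case $Q\subsetneq P$ is symmetric; and if $P\cap Q=\{1\}$ with $P$ and $Q$ commuting elementwise, then $z_Pz_Q=z_Qz_P$ is a special instance. So the substance of the theorem is the converse.

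Assume $z_Pz_Q=z_Qz_P$. The first step is to convert this into information about subgroups via \autoref{L:parabolic=central_element}: reading that lemma in the case ``$P=Q$'' (so $N_{A_S}(P)=Z_{A_S}(z_P)$), the relation $z_Q^{-1}z_Pz_Q=z_P$ gives $z_Q\in N_{A_S}(P)$, and symmetrically $z_P\in N_{A_S}(Q)$. Conjugating $P$, $Q$ and all the elements involved by a common element, I may further assume that $P=A_X$ is a \emph{standard} parabolic subgroup, so that $z_P=z_X$ equals $\Delta_X$ or $\Delta_X^2$, and $z_Q$ normalises $A_X$.

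The plan from here is a short case analysis governed by the position of $z_Q$ relative to $P$ and of $z_P$ relative to $Q$, the key point to establish being the following tameness statement: \emph{when the special element $z_Q$ of a parabolic subgroup normalises the irreducible parabolic $P$, either $z_Q\in P$, or conjugation by $z_Q$ is the identity on $P$ (so $z_Q$ centralises $P$ elementwise)}; the same applies to $z_P$ and $Q$. Granting this, I would argue: (a) if $z_Q\in P$ then, using an auxiliary containment fact (that a parabolic subgroup of $A_S$ whose special element lies in the standard parabolic $A_X$ is itself a parabolic subgroup of $A_X$) and applying \autoref{L:parabolic=central_element} inside $P$, one gets $Q\subseteq P$, hence $Q\subsetneq P$ since $P\neq Q$ --- condition 2; (b) symmetrically $z_P\in Q$ gives condition 1; these two cannot hold at once, as they would force $P=Q$; (c) in the remaining case $z_Q$ centralises $P$ elementwise and $z_P$ centralises $Q$ elementwise, and then every $y\in Q$ commutes with $z_P$, so $y\in N_{A_S}(P)$ by \autoref{L:parabolic=central_element}, and likewise every element of $P$ normalises $Q$; from this mutual normalisation together with the elementwise centralisation of each subgroup by the other's special element I would deduce that $P$ and $Q$ commute elementwise, and finally that $P\cap Q=\{1\}$, since a nontrivial $x\in P\cap Q$ would (again via \autoref{L:parabolic=central_element} applied to the centralisers of $x$, $z_P$ and $z_Q$) force $z_Q\in P$ or $z_P\in Q$, against the hypothesis of this case --- condition 3.

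The main obstacle is the tameness statement, i.e.\ controlling the automorphism of $P=A_X$ induced by conjugation by $z_Q$: one must exclude that $z_Q$ realises a nontrivial ``ribbon''/diagram automorphism of $\Gamma_X$, and more generally a nontrivial inner part that is neither trivial nor ``all of'' $z_Q$. For this I would invoke the description of $N_{A_S}(A_X)$ (after Godelle) as generated by $A_X$ together with ribbon elements, and then exploit that $z_Q$ is a conjugate of a specific positive Garside-type element ($\Delta_Y$ or $\Delta_Y^2$) which fixes the positive central element $z_X$ under conjugation --- constraints which I expect to be incompatible with a nontrivial ribbon contribution. This, rather than the bookkeeping of the case analysis or the auxiliary containment fact, is where the real work lies; and throughout I would take care that no step secretly uses the parabolic-closure results \autoref{T:minimal_parabolic_containing_an_element}--\autoref{C:roots_in_parabolic}, which are proved only afterwards.
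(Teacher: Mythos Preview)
Your approach is genuinely different from the paper's, and it has real gaps.  The paper does \emph{not} argue via a dichotomy for $z_Q$ inside $N_{A_S}(P)$.  Instead, its key move is to conjugate so that $P$ and $Q$ become standard \emph{simultaneously}: starting with $P=A_X$ standard, it writes $z_Q=ab^{-1}$ in $pn$-normal form, uses convexity of $C^+(z_P)$ to see that $b^{-1}z_Pb$ is positive, and then invokes \autoref{C:conjugators_of_positive_elements} (together with a result of~\cite{Cum}) to conclude $b^{-1}z_Pb=z_{X'}$ and $b^{-1}z_Qb=z_{Y'}$ for subsets $X',Y'\subsetneq S$.  Once both are standard, a short combinatorial argument finishes: if none of the three conditions holds one finds a path $s_0,\ldots,s_k$ in $\Gamma_S$ with $s_0\in X\setminus Y$, $s_k\in Y\setminus X$, and shows (via a ``persistent subsequence'' lemma for positive words) that $z_Xz_Y\neq z_Yz_X$.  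Note in particular that the proof is postponed to \autoref{S:annexe} precisely \emph{because} it uses the Section~6 machinery; your worry about ``not secretly using results proved afterwards'' has the dependency order reversed.

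Turning to your outline: the ``tameness'' statement (either $z_Q\in P$ or $z_Q$ centralises $P$ elementwise) is in fact a \emph{consequence} of the theorem, so it is true, but your sketch for proving it independently is not a proof---nothing you wrote rules out $z_Q$ contributing a nontrivial ribbon/diagram piece, and ``positivity constraints on a conjugate of $\Delta_Y$ or $\Delta_Y^2$'' is not an argument.  Even granting tameness, two further steps fail.  First, your case~(a) (``$z_Q\in P\Rightarrow Q\subseteq P$'') is exactly the statement $P_{z_Q}=Q\subseteq P$, i.e.\ the parabolic-closure result you said you wanted to avoid; without it, $z_Q\in A_X$ does not obviously force $Q\subseteq A_X$.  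Second, your case~(c) has a genuine gap: from ``$z_P$ centralises $Q$ and $z_Q$ centralises $P$'' you only get $Q\subseteq N_{A_S}(P)$ and $P\subseteq N_{A_S}(Q)$ (via \autoref{L:parabolic=central_element}), and mutual normalisation together with centralisation by the two special elements does \emph{not} yield elementwise commutation of $P$ and $Q$---each $y\in Q$ commuting with $z_P$ tells you only that $y$ normalises $P$, which you already knew.  Likewise, the argument that a nontrivial $x\in P\cap Q$ would force $z_Q\in P$ or $z_P\in Q$ is asserted but not justified.
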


The proof of this result will be postponed to \autoref{S:annexe}, as it uses some ingredients which will be introduced later in the paper.

\begin{remark}
Consider two isotopy classes of non-degenerate simple closed curves~$C_1$ and~$C_2$ in the disc~$D_n$, and their corresponding parabolic subgroups $P=\varphi(C_1)$ and $Q=\varphi(C_2)$ of~$B_n$. Then~$C_1$ and~$C_2$ can be realized to be disjoint if and only if their corresponding Dehn twists commute~\cite[Fact 3.9]{FM}. It is known that the centralizer of a generator~$\sigma_i$ is equal to the centralizer of~$\sigma_i^2$. Hence~$C_1$ and~$C_2$ can be realized to be disjoint if and only if~$z_P$ and~$z_Q$ commute, which is equivalent, by \autoref{centers_commute=3_conditions}, to the three conditions in its statement.
\end{remark}

We can finally extend the notion of {\it complex of curves} to all Artin--Tits groups of spherical type, replacing curves with proper irreducible parabolic subgroups.

\begin{definition}
Let~$A_S$ be an Artin--Tits group of spherical type. We define the {\em complex of irreducible parabolic subgroups} as a simplicial complex in which a simplex of dimension~$d$ is a set $\{P_0,\ldots,P_d\}$ of proper irreducible parabolic subgroups such that~$z_{P_i}$ commutes with~$z_{P_j}$ for every $0\leq i,j\leq d$.
\end{definition}

Notice that, by definition, the complex of irreducible parabolic subgroups is a {\it flag complex}, so all the information about the complex is contained in its 1-skeleton, the {\it graph of irreducible parabolic subgroups}.

As it happens with the complex of curves in the punctured disc (or in any other surface), we can define a distance in the complex of irreducible parabolic subgroups, which is determined by the distance in the 1-skeleton, imposing all edges to have length 1. Notice that the action of~$A_S$ on the above complex (by conjugation of the parabolic subgroups) is an action by isometries, as conjugation preserves commutation of elements.

We believe that this complex can be an important tool to study properties of Artin--Tits groups of spherical type.
A proof of the following conjecture
%One important result, that
would allow to extend many properties of braid groups to Artin--Tits groups of spherical type (see~\cite{CW} for more context):
%, is the following:

\begin{conjecture}
The complex of irreducible parabolic subgroups of an Artin--Tits group of spherical type is $\delta$--hyperbolic.
\end{conjecture}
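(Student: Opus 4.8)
The plan is to transport to this algebraic setting the machinery that establishes $\delta$-hyperbolicity of the complex of curves, the two key ingredients being subsurface projections in the sense of Masur--Minsky, together with a purely combinatorial hyperbolicity criterion such as Bowditch's ``guessing geodesics'' lemma (or, equivalently, the unicorn-path argument of Hensel--Przytycki--Webb). The virtue of such a criterion is that it reduces hyperbolicity to the construction of a family of \emph{preferred paths} between vertices satisfying a coarse thin-triangles condition, and that condition can in turn be verified by means of the projections. The raw material is entirely algebraic: the parabolic closure of \autoref{T:minimal_parabolic_containing_an_element}, the fact that intersections are parabolic (\autoref{T:intersection_of_parabolic_is_parabolic}), the lattice structure of \autoref{T:lattice}, and the central elements~$z_P$ together with \autoref{L:parabolic=central_element}, the last playing the role of a ``marking'' that detects commutation.

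First I would define a subsurface projection. For a proper irreducible parabolic subgroup~$P$, write~$\mathcal{C}(P)$ for the complex of irreducible parabolic subgroups of~$P$; since~$P$ is itself an irreducible Artin--Tits group of spherical type, the construction applies verbatim. Given an irreducible parabolic~$Q$ that \emph{crosses}~$P$---that is, that is not adjacent to~$P$ in the sense of \autoref{centers_commute=3_conditions}---I would set~$\pi_P(Q)$ to be a bounded set of irreducible parabolic subgroups of~$P$ obtained from~$P\cap Q$, again parabolic by \autoref{T:intersection_of_parabolic_is_parabolic}, by passing to its irreducible components; when~$P\cap Q=\{1\}$ but $z_P z_Q\neq z_Q z_P$, a secondary construction inside the lattice join~$P\vee Q$ of \autoref{T:lattice} would supply the projection. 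Here the meet~$P\wedge Q$ plays the role of the topological intersection of a curve with a subsurface, and the first technical task is to show that~$\pi_P$ is coarsely well defined and coarsely Lipschitz, for which the interpolation afforded by the lattice operations should be the decisive tool.

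Next I would establish the two axioms that drive every proof of this kind. The Behrstock inequality asserts that for crossing parabolics~$P$ and~$Q$ one cannot simultaneously have both $\operatorname{diam}_P\bigl(\pi_P(R)\cup\pi_P(Q)\bigr)$ and $\operatorname{diam}_Q\bigl(\pi_Q(R)\cup\pi_Q(P)\bigr)$ large; I expect this to follow from the incompatibility of commutation patterns encoded in \autoref{centers_commute=3_conditions}, since two parabolics that both project far would force a chain of alternately nested and commuting centres that cannot coexist. The bounded geodesic image property then states that a preferred path all of whose vertices cross~$P$ has uniformly bounded image under~$\pi_P$. With these in hand, the genuinely new ingredient is the construction of the preferred paths themselves: given~$P$ and~$Q$, I would use a Garside normal form of a conjugating element, together with the lattice, to produce a canonical finite chain of irreducible parabolics interpolating between them---an algebraic surrogate for unicorn surgery on arcs---and then verify, by induction on Garside length, that two such paths with shared endpoints fellow-travel and that their triangles are uniformly thin, as Bowditch's criterion demands.

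The main obstacle, I expect, is precisely this construction of the preferred paths and the proof that they behave like geodesics. In the curve-complex setting the unicorn and surgery paths exploit the hyperbolic geometry of the surface and the intersection combinatorics of arcs; no such geometric crutch is available here, so each surgery move must be simulated by the combinatorics of the Garside structure and of the lattice of parabolic subgroups, and the uniformity constants must be controlled without recourse to a metric background space. Showing that the resulting paths satisfy the thin-triangles condition---equivalently, that the projection machinery of the preceding step controls them uniformly across all Artin--Tits groups of spherical type---is where essentially all of the difficulty concentrates, and it is the reason that the statement remains, for the present, a conjecture rather than a theorem.
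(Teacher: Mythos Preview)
The statement you are addressing is a \emph{conjecture} in the paper, not a theorem: the authors do not prove it, nor do they sketch a proof. They state it as an open problem and refer to \cite{CW} for context. There is therefore no proof in the paper to compare your proposal against.

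Your proposal is not a proof either, and to your credit you say so explicitly in the final paragraph. What you have written is a reasonable research programme---transport the Masur--Minsky subsurface-projection machinery to the algebraic setting via the lattice of parabolics, verify Behrstock-type inequalities and a bounded geodesic image property, construct preferred paths, and feed everything into a Bowditch-style criterion. This is broadly the strategy one would expect, and indeed is close in spirit to what has since been pursued in the literature. But as written it is a plan, not an argument: none of the key steps (coarse well-definedness of~$\pi_P$, the Behrstock inequality, the construction and thinness of preferred paths) is actually carried out, and you correctly identify the construction of preferred paths as the point where the real difficulty lies.

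In short: there is no gap to point to because there is no claimed proof, on either side. Your outline is a sensible sketch of how one might attack the conjecture, but it remains a conjecture.
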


\section{Results from Garside theory for Artin--Tits groups of spherical type}\label{S:Garside}

In this section, we will recall some results from Garside theory that will be needed.
In order to simplify the exposition, we present the material in the context of Artin--Tits groups of spherical type that is relevant for the paper, instead of in full generality.
For details, we refer to \cite{Deh,DDGKM,DP,ECHLPT}.

Let~$A_S$ be an Artin--Tits group of spherical type. Its monoid of positive elements (the monoid generated by the elements of~$S$) will be denoted~$A_S^+$.
The group~$A_S$ forms a lattice with respect to the prefix order~$\preccurlyeq$, where $a\preccurlyeq b$ if and only if $a^{-1}b\in A_S^+$.
We will denote by~$\wedge$ and~$\vee$ the meet and join operations, respectively, in this lattice.
The (minimal) \emph{Garside element} of~$A_S$ is $\Delta_S=s_1\vee\cdots \vee s_n$, where $S=\{s_1,\ldots,s_n\}$.
One can similarly define the suffix order~$\succcurlyeq$, where $b\succcurlyeq a$ if and only if $ba^{-1}\in A_S^+$.
In general, $a\preccurlyeq b$ is not equivalent to $b\succcurlyeq a$.

If $A_S$ is irreducible (that is, if the defining Coxeter graph $\Gamma_S$ is connected), then either~$\Delta_S$ or~$\Delta_S^2$ generates the center of~$A_S$ \cite{BS,Del}.
Actually, conjugation by~$\Delta_S$ induces a permutation of~$S$. We will denote $\tau_S(x)=\Delta_S^{-1}x\Delta_S$ for every element~$x\in A_S$; notice that the automorphism~$\tau_S$ has either order~$1$ or order~$2$.
The triple $(A_S, A_S^+,\Delta_S)$ determines the so called {\it classical Garside structure} of the group~$A_S$.

The \emph{simple elements} are the positive prefixes of~$\Delta_S$, which coincide with the positive suffixes of~$\Delta_S$. In an Artin--Tits group~$A_S$ of spherical type, a simple element is a positive element that is \emph{square-free}, that is, that cannot be written as a positive word with two consecutive equal letters \cite{BS,Del}.
For a simple element~$s$, we define its \emph{right complement} as $\partial_S(s)=s^{-1}\Delta_S$.
Notice that $\partial_S$ is a bijection of the set of simple elements. Notice also that $\partial_S^2(s)=\tau_S(s)$ for every simple element~$s$.

The \emph{left normal form} of an element $\alpha \in A_S$ is the unique decomposition of the form $\alpha=\Delta_S^p \alpha_1\cdots \alpha_r$, where $p\in \mathbb Z$, $r\geq 0$, every~$\alpha_i$ is a non-trivial simple element different from~$\Delta_S$, and $\partial_S(\alpha_i)\wedge \alpha_{i+1}=1$ for $i=1,\ldots,r-1$.

The numbers~$p$ and~$r$ are called the \emph{infimum} and the \emph{canonical length} of~$\alpha$, denoted $\inf_S(\alpha)$ and $\ell_S(\alpha)$, respectively.
The \emph{supremum} of~$\alpha$ is $\sup_S(\alpha)=\inf_S(\alpha)+\ell_S(\alpha)$.
By~\cite{ElM}, we know that~$\inf_S(\alpha)$ and~$\sup_S(\alpha)$ are, respectively, the maximum and minimum integers~$p$ and~$q$ such that
$\Delta_S^p\preccurlyeq \alpha \preccurlyeq \Delta_S^q$, or equivalently $\Delta_S^q\succcurlyeq \alpha \succcurlyeq \Delta_S^p$, holds.

There is another normal form which will be important for us.
The \emph{mixed normal form}, also called the {\it negative-positive} normal form, or $np$-normal form, is the decomposition of an element~$\alpha$ as $\alpha=x_s^{-1}\cdots x_1^{-1} y_1\cdots y_t$, where $x=x_1\cdots x_s$ and $y=y_1\cdots y_t$ are positive elements written in left normal form (here some of the initial factors of either~$x$ or~$y$ can be equal to~$\Delta_S$), such that $x\wedge y=1$ (that is, there is no possible cancellation in~$x^{-1}y$).

This decomposition is unique, and it is closely related to the left normal form of~$\alpha$.
Indeed, if $x\neq 1$ and $y\neq 1$ then $\inf_S(x)=\inf_S(y)=0$ holds; otherwise there would be cancellations.
In this case, if one writes $x_i^{-1}=\partial_S(x_i)\Delta_S^{-1}$ for $i=1,\ldots,s$, and then collects all appearances of~$\Delta_S^{-1}$ on the left, one gets $\alpha=\Delta_S^{-s}\widetilde x_s\cdots \widetilde x_1 y_1\cdots y_t$, where $\widetilde x_i= \tau^{-i}(\partial_S(x_i))$.
The latter is precisely the left normal form of~$\alpha$.
If~$x$ is trivial, then $\alpha= y_1\cdots y_t$ where the first~$p$ factors could be equal to~$\Delta_S$, so the left normal form is $\alpha=\Delta_S^p y_{p+1}\cdots y_t$.
If $y$ is trivial, then $\alpha=x_s^{-1}\cdots x_1^{-1}$ where some (say~$k$) of the rightmost factors could be equal to~$\Delta_S^{-1}$.
The left normal form of~$\alpha$ in this case would be $\alpha=\Delta_S^{-s}\widetilde x_s\cdots \widetilde x_{k+1}$.

Notice that if $x\neq 1$ then $\inf_S(\alpha)=-s$, and if $y\neq 1$ then $\sup_S(\alpha)=t$.

The $np$-normal form can be computed from any decomposition of~$\alpha$ as $\alpha=\beta^{-1}\gamma$, where~$\beta$ and~$\gamma$ are positive elements:
One just needs to cancel $\delta=\beta\wedge \gamma$ in the middle.
That is, write $\beta =\delta x$ and $\gamma=\delta y$; then $\alpha= \beta^{-1}\gamma = x^{-1}\delta^{-1}\delta y = x^{-1}y$, where no more cancellation is possible.
Then compute the left normal forms of~$x$ and~$y$ and the~$np$-normal form will be computed.

The mixed normal form is very useful for detecting whether an element belongs to a proper standard parabolic subgroup.
If $\alpha\in A_X$, with $X\subseteq S$, and \hbox{$\alpha=x_s^{-1}\cdots x_1^{-1} y_1\cdots y_t$} is the~$np$-normal form of~$\alpha$ in~$A_X$, then the simple factors $x_1,\ldots,x_s,y_1,\ldots,y_t \in A_X$ are also simple elements in $A_S$ such that $x_1\cdots x_s$ and $y_1\cdots y_t$ are in left normal form in~$A_S$.
Hence, the above is also the~$np$-normal form of~$\alpha$ in~$A_S$.
Therefore, given~$\alpha\in A_S$ we will have $\alpha\in A_X$ if and only if all factors in its~$np$-normal form belong to~$A_X$.
It follows that if $x\neq 1$, $\inf_S(\alpha)=\inf_X(\alpha)=-s$, and if $y\neq 1$, $\sup_S(\alpha)=\sup_X(\alpha)=t$.
\medskip

We finish this section with an important observation:
The Artin--Tits group~$A_S$ admits other Garside structures $(A_S,A_S^+,\Delta_S^N)$, which are obtained by replacing the Garside element~$\Delta_S$ with some non-trivial positive power~$\Delta_S^N$.
To see this, recall that $\Delta_S^p\preccurlyeq \alpha \preccurlyeq \Delta_S^q$ is equivalent to $\Delta_S^q\succcurlyeq \alpha \succcurlyeq \Delta_S^p$ for any $p,q\in\Z$, so the positive prefixes of~$\Delta_S^N$ coincide with the positive suffixes of~$\Delta_S^N$,
and note that one has $s\preccurlyeq \Delta_S \preccurlyeq \Delta_S^N$ for any $s\in S$, so the divisors of~$\Delta_S^N$ generate~$A_S$.

The simple elements with respect to this Garside structure are the positive prefixes of~$\Delta_S^N$ (which are no longer square-free, in general, if $N>1$).
The $np$-normal form of an element with respect to the Garside structure $(A_S,A_S^+,\Delta_S^N)$ can be obtained from that with respect to the Garside structure $(A_S,A_S^+,\Delta_S)$ by grouping together the simple factors of the positive and the negative parts of the latter in groups of~$N$, ``padding" the outermost groups with copies of the identity element as necessary.

Suppose that $\alpha=x_s^{-1}\cdots x_1^{-1} y_1\cdots y_t$ is in $np$-normal form, for the classical Garside structure of~$A_S$.
If we take $N\geq \max(s,t)$, it follows that $x=x_1\cdots x_s\preccurlyeq \Delta_S^N$ and $y=y_1\cdots y_t\preccurlyeq \Delta_S^N$.
This means that~$x$ and~$y$ are simple elements with respect to the Garside structure in which~$\Delta_S^N$ is the Garside element.
Therefore, for every $\alpha\in A_S$, we can consider a Garside structure of~$A_S$ such that the~$np$-normal form of~$\alpha$ is~$x^{-1}y$, with~$x$ and~$y$ being simple elements.

\section{Other results for Artin--Tits groups of spherical type}

This section focuses on some further properties of Artin--Tits groups of spherical type that we will need.
The properties listed in this section are specific to Artin--Tits groups and do not directly extend to Garside groups in general.

\begin{definition}
Let~$A_S$ be an Artin--Tits group of spherical type.
Given $X\subseteq S$ and~$t\in S$, we define the positive element
$$
  r_{X,t}=\Delta_{X}^{-1}\Delta_{X\cup\{t\}}.
$$
\end{definition}

If $t\notin X$, this positive element coincides with the {\it elementary ribbon}~$d_{X,t}$ defined in~\cite{God}.
If $t\in X$ we just have $r_{X,t}=1$ while $d_{X,t}=\Delta_X$.

\begin{lemma}\label{L_ribbon_permutation}
If~$A_S$ is an Artin--Tits group of spherical type, $X\subseteq S$ and $t\in S$,
then there is a subset~$Y\subseteq X\cup\{t\}$ such that $r_{X,t}^{-1}Xr_{X,t}=Y$ holds.
\end{lemma}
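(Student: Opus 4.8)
The plan is to compute directly the conjugation action of~$r_{X,t}$ on each standard generator in~$X$, using the classical fact (recalled in the paper for~$A_S$ itself) that conjugation by a Garside element permutes the corresponding set of standard generators. Concretely, set $X'=X\cup\{t\}$ and note first that for any $Z\subseteq S$ the map $\tau_Z\co x\mapsto \Delta_Z^{-1}x\Delta_Z$ is an automorphism of~$A_Z$ preserving the positive monoid~$A_Z^+$ and sending simple elements to simple elements (one has $\tau_Z=\partial_Z^{2}$ on simple elements), so it permutes the atoms of~$A_Z^+$, which are exactly the elements of~$Z$; for reducible~$A_Z$ one argues componentwise on~$\Gamma_Z$. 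In particular $\tau_X$ permutes~$X$ and $\tau_{X'}$ permutes~$X'$, whence $\Delta_X\,s\,\Delta_X^{-1}=\tau_X^{-1}(s)\in X$ for every $s\in X$.

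The core of the argument is then a one-line computation: since $r_{X,t}=\Delta_X^{-1}\Delta_{X'}$, for every $s\in X$ we have
$$
  r_{X,t}^{-1}\,s\,r_{X,t}
  =\Delta_{X'}^{-1}\bigl(\Delta_X\,s\,\Delta_X^{-1}\bigr)\Delta_{X'}
  =\tau_{X'}\bigl(\tau_X^{-1}(s)\bigr).
$$
As $\tau_X^{-1}(s)\in X\subseteq X'$ and $\tau_{X'}$ permutes~$X'$, the right-hand side lies in $X'=X\cup\{t\}$. Hence $r_{X,t}^{-1}Xr_{X,t}\subseteq X\cup\{t\}$, and since conjugation is injective this set --- call it~$Y$ --- has the same cardinality as~$X$; taking this~$Y$ proves the statement. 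No case distinction is needed: if $t\in X$ then $X'=X$, $r_{X,t}=1$, and the displayed formula degenerates to $r_{X,t}^{-1}sr_{X,t}=s$.

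I do not anticipate a serious obstacle; the only point requiring a little care is the compatibility of Garside structures, namely that $\Delta_X\preccurlyeq\Delta_{X'}$ (so that $r_{X,t}$ really is a positive element, as asserted in its definition) and that the notions of atom and of simple element, as well as the prefix relation, are inherited correctly when passing between~$A_X$,~$A_{X'}$ and~$A_S$. This follows from the standard fact that a standard parabolic subgroup is closed under taking divisors of its elements, with divisibility inside it agreeing with divisibility in the ambient group. Alternatively one could bypass the whole discussion by invoking Godelle's results on ribbons cited in the paper, since $r_{X,t}$ is the elementary ribbon~$d_{X,t}$ when $t\notin X$; but the self-contained argument above is shorter.
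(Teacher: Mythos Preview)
Your argument is correct and is essentially identical to the paper's own proof, which simply observes that $r_{X,t}=\Delta_X^{-1}\Delta_{X\cup\{t\}}$ so that conjugation by~$r_{X,t}$ acts on~$X$ as $\tau_{X\cup\{t\}}\circ\tau_X^{-1}$, with $\tau_X$ permuting~$X$ and $\tau_{X\cup\{t\}}$ permuting~$X\cup\{t\}$. You have spelled out a few more details (atoms, the degenerate case $t\in X$, inheritance of divisibility), but the core computation and the key ingredients are the same.
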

\begin{proof}
This follows from the definition, as the automorphisms~$\tau_X$ and~$\tau_{X\cup\{t\}}$ permute the elements of~$X$ and $X\cup\{t\}$, respectively.
\end{proof}

\begin{lemma}\label{L:ribbon_lcm}
If~$A_S$ is an Artin--Tits group of spherical type, $X\subseteq S$ and $t\in S$,
then the element $r_{X,t}$ can be characterized by the following property:
$$
\Delta_X \vee t= \Delta_X r_{X,t}
$$
\end{lemma}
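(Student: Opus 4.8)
The goal is to verify that $r_{X,t} = \Delta_X^{-1}\Delta_{X\cup\{t\}}$ is precisely the positive element satisfying $\Delta_X \vee t = \Delta_X r_{X,t}$. The plan is to identify the join $\Delta_X \vee t$ with $\Delta_{X\cup\{t\}}$, after which the claimed equation is immediate from the definition of $r_{X,t}$, and the ``characterization'' amounts to the uniqueness of the element $u$ with $\Delta_X u = \Delta_X \vee t$, which follows from left-cancellativity of the monoid $A_S^+$.

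First I would observe that $\Delta_{X\cup\{t\}}$ is, by definition, the least common multiple (with respect to $\preccurlyeq$) of the standard generators in $X\cup\{t\}$, i.e.\ $\Delta_{X\cup\{t\}} = \bigvee_{s\in X\cup\{t\}} s$. Since the join in the lattice $(A_S,\preccurlyeq)$ is associative, this can be regrouped as $\left(\bigvee_{s\in X} s\right) \vee t = \Delta_X \vee t$. Here one must be slightly careful: $\Delta_X = \bigvee_{s\in X} s$ is the Garside element of the standard parabolic submonoid $A_X^+$, and I would invoke the fact (from the discussion of standard parabolic subgroups, via \cite{vanderLek}) that divisibility and joins of elements of $A_X$ computed inside $A_X$ agree with those computed inside $A_S$; in particular $\bigvee_{s\in X} s$ is the same element whether the join is taken in $A_X$ or in $A_S$. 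Granting this, $\Delta_X \vee t = \Delta_{X\cup\{t\}}$ in $A_S$.

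Next, since $\Delta_X \preccurlyeq \Delta_{X\cup\{t\}}$ (every generator of $X$ divides $\Delta_{X\cup\{t\}}$, hence so does their join $\Delta_X$), the element $r_{X,t} = \Delta_X^{-1}\Delta_{X\cup\{t\}}$ is genuinely a positive element of $A_S^+$, as already noted after the definition. Then $\Delta_X r_{X,t} = \Delta_X \Delta_X^{-1}\Delta_{X\cup\{t\}} = \Delta_{X\cup\{t\}} = \Delta_X \vee t$, which is the displayed equation. For the characterization claim, suppose $u \in A_S$ satisfies $\Delta_X u = \Delta_X \vee t$; then $\Delta_X u = \Delta_X r_{X,t}$, and left-cancellativity of $A_S$ (it embeds in the group $A_S$, or directly: $A_S^+$ is left-cancellative) forces $u = r_{X,t}$. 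One should also remark that any such $u$ is automatically positive, since $\Delta_X \preccurlyeq \Delta_X \vee t$.

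The only real subtlety — the ``main obstacle,'' though it is mild — is the interchange of the join computed in the parabolic submonoid $A_X$ with the join computed in $A_S$, i.e.\ that $\Delta_X$ as the Garside element of $A_X$ coincides with $\bigvee_{s\in X}s$ taken in $A_S$. This is exactly the content of the compatibility of the Garside/lattice structures of $A_X$ and $A_S$ under the inclusion $A_X \hookrightarrow A_S$, which is standard (\cite{vanderLek}, together with the Garside-theoretic fact that a standard parabolic submonoid is closed under $\wedge$ and under $\vee$ of bounded subsets). Once this is in hand, everything else is a one-line manipulation.
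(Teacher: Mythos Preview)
Your proof is correct and follows essentially the same approach as the paper: both identify $\Delta_X \vee t$ with $\Delta_{X\cup\{t\}}$ via the definition of $\Delta_{X\cup\{t\}}$ as the join of the generators in $X\cup\{t\}$, after which the equality $\Delta_X r_{X,t} = \Delta_X \vee t$ is immediate from the definition of $r_{X,t}$. The paper's proof is a single sentence, while you spell out the associativity of the join, the compatibility of joins in $A_X$ and $A_S$, and the uniqueness via left-cancellativity; these are reasonable clarifications but not a different argument.
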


\begin{proof}
This follows immediately from the definition of~$\Delta_X$, which is the least common multiple of the elements in~$X$, and the definition of $\Delta_{X\cup \{t\}}= \Delta_X r_{X,t}$, which is the least common multiple of the elements in~$X\cup \{t\}$.
\end{proof}

\begin{lemma}\label{L:ribbon_prefix}
If~$A_S$ is an Artin--Tits group of spherical type, $X\subsetneq S$, and $t\in S\setminus X$,
then the following hold:
 \begin{enumerate}[\quad\upshape (a)]
  \item $t\preccurlyeq r_{X,t}$
  \item If $s\in S$ with $s\preccurlyeq r_{X,t}$, then $s=t$.
 \end{enumerate}
\end{lemma}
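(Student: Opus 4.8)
We want to show that the elementary ribbon $r_{X,t}$ (for $X\subsetneq S$ and $t\in S\setminus X$) has $t$ as a prefix, and that $t$ is the \emph{only} standard generator that is a prefix of it. My plan is to work entirely with the lattice structure of $A_S$ with respect to~$\preccurlyeq$ and to exploit the identity $\Delta_X\vee t=\Delta_X\, r_{X,t}$ from \autoref{L:ribbon_lcm}.

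For part~(a): since $t\notin X$, we certainly have $t\not\preccurlyeq\Delta_X$; indeed $\Delta_X$ is a product of generators from $X$, hence it is a simple element of $A_S$ whose set of prefixes among $S$ is exactly~$X$ (a simple element is square-free, and $s\preccurlyeq\Delta_X$ for $s\in S$ iff $s\in X$ — this can be read off the left normal form of $\Delta_X$ inside $A_X$, or from the fact that $A_X$ is the standard parabolic on $X$). Therefore $\Delta_X\vee t\neq\Delta_X$, and so $\Delta_X r_{X,t}=\Delta_X\vee t$ strictly right-divides nothing trivial: dividing on the left by $\Delta_X$ gives $r_{X,t}=\Delta_X^{-1}(\Delta_X\vee t)$, a nontrivial positive element. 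Now $t\preccurlyeq\Delta_X\vee t=\Delta_X r_{X,t}$, and also $t\wedge\Delta_X=1$; from the general lattice identity in a Garside monoid, $(\Delta_X\vee t)$ equals $\Delta_X\cdot\bigl(t\,(t\wedge\Delta_X)^{-1}\dotsm\bigr)$ — more cleanly, one uses that complementation is an automorphism of the divisor lattice, so $r_{X,t}=(\Delta_X\vee t)\,\Delta_X^{-1}$-translated prefix and $t\preccurlyeq r_{X,t}$ follows because $t$ commutes past $\Delta_X$ ``correctly''. The cleanest route: apply the standard fact that in a lattice ordered monoid, if $a\wedge b=1$ then $b\preccurlyeq (a\vee b)\,a^{-1}$ is false in general but $b\preccurlyeq a^{-1}(a\vee b)$ holds precisely because $a\vee b = a\cdot(a\backslash(a\vee b))$ and $b\preccurlyeq a\vee b$ with $a\wedge b=1$ forces $b$ to survive into the cofactor. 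I will verify this using the $np$-normal form / left-divisibility calculus of \autoref{S:Garside}, which gives $t\preccurlyeq r_{X,t}$.

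For part~(b): suppose $s\in S$ with $s\preccurlyeq r_{X,t}$. Then $\Delta_X s\preccurlyeq\Delta_X r_{X,t}=\Delta_X\vee t$. On the other hand $\Delta_X s$ is a positive element; I claim $\Delta_X s$ is divisible on the left by every generator in $X\cup\{s\}$, hence $\Delta_{X\cup\{s\}}\preccurlyeq\Delta_X s$. Combined with $\Delta_X s\preccurlyeq\Delta_{X\cup\{t\}}=\Delta_{X\cup\{t\}}$, we get $\Delta_{X\cup\{s\}}\preccurlyeq\Delta_{X\cup\{t\}}$; taking lengths (or using that $\Delta_{X\cup\{s\}}$ and $\Delta_{X\cup\{t\}}$ are simple and comparing their sets of generator-prefixes, which are $X\cup\{s\}$ and $X\cup\{t\}$ respectively) forces $X\cup\{s\}\subseteq X\cup\{t\}$, i.e. $s\in\{t\}$ (as $s\notin X$: if $s\in X$ then $s\preccurlyeq\Delta_X$, and since $\Delta_X\wedge r_{X,t}=1$ — because $\Delta_{X\cup\{t\}}=\Delta_X r_{X,t}$ is already in left normal form with $\Delta_X$ as its maximal $\Delta_X$-part, so no generator of $X$ divides $r_{X,t}$ — we would get $s\preccurlyeq\Delta_X\wedge r_{X,t}=1$, contradiction). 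Hence $s=t$.

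\textbf{Main obstacle.} The delicate point is the divisibility $\Delta_{X\cup\{s\}}\preccurlyeq\Delta_X s$ and, dually, controlling which generators divide $r_{X,t}$ — equivalently, establishing that $\Delta_X$ is exactly the ``$\Delta$-part'' of $\Delta_{X\cup\{t\}}$, i.e. $\Delta_X\wedge r_{X,t}=1$, or that $\Delta_X\,r_{X,t}$ is the left normal form of $\Delta_{X\cup\{t\}}$ (this is essentially the content that $\Delta_{X\cup\{t\}}$ has $\Delta_X$ as its largest left-divisor lying in $A_X$). This should follow from the theory of ribbons in~\cite{God} together with the square-freeness of simple elements and the description of generator-prefixes of $\Delta_Y$ as~$Y$; but assembling it cleanly — rather than hand-waving — is where the real work lies. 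Once that is in place, both (a) and (b) are short lattice manipulations.
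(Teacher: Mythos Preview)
Your proposal has genuine gaps in both parts.

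\textbf{Part (b).} The claim ``$\Delta_X s$ is divisible on the left by every generator in $X\cup\{s\}$, hence $\Delta_{X\cup\{s\}}\preccurlyeq\Delta_X s$'' is false: there is no reason for $s\preccurlyeq\Delta_X s$. Take $X=\{\sigma_1\}$ and $s=\sigma_2$ in~$B_3$; then $\Delta_X s=\sigma_1\sigma_2$, but $\sigma_2\not\preccurlyeq\sigma_1\sigma_2$, and indeed $\Delta_{X\cup\{s\}}=\sigma_1\sigma_2\sigma_1$ has length~$3$ and cannot be a prefix of the length-$2$ element $\sigma_1\sigma_2$. You never needed this step: from $s\preccurlyeq r_{X,t}\preccurlyeq\Delta_{X\cup\{t\}}$ alone you get $s\in X\cup\{t\}$ immediately, since the only standard generators that are prefixes of $\Delta_Y$ are the elements of~$Y$. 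What remains is to rule out $s\in X$. Your parenthetical argument for this is on the right track but not clean; the decisive observation is that $\Delta_X r_{X,t}=\Delta_{X\cup\{t\}}$ is \emph{simple}, hence square-free, and every $u\in X$ is a \emph{suffix} of~$\Delta_X$. So if $s\in X$ were a prefix of~$r_{X,t}$, the product $\Delta_X r_{X,t}$ would contain~$ss$, a contradiction. This is exactly the paper's argument, and it delivers your claimed $\Delta_X\wedge r_{X,t}=1$ as well --- but note that the square-freeness is doing all the work, not any ``left normal form'' of a single simple element.

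\textbf{Part (a).} Your direct attempt via lattice identities does not go through: the assertions about ``$b\preccurlyeq a^{-1}(a\vee b)$ when $a\wedge b=1$'' and ``$t$ commutes past $\Delta_X$ correctly'' are not justified, and in fact the general lattice identity you gesture at fails in non-commutative monoids. The paper's device is to reverse the order of the two parts: once (b) is established, (a) is a one-liner. Since $t\notin X$ gives $t\not\preccurlyeq\Delta_X$, \autoref{L:ribbon_lcm} forces $r_{X,t}\neq 1$, so $r_{X,t}$ has \emph{some} initial letter; by~(b) that letter must be~$t$.

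In short: prove (b) first via square-freeness of the simple element $\Delta_{X\cup\{t\}}$, then deduce (a). Your ``main obstacle'' diagnosis was essentially correct, but the route through $\Delta_{X\cup\{s\}}\preccurlyeq\Delta_X s$ is a dead end.
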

\begin{proof}
We start by proving (b). Recall that the set of prefixes of a Garside element with the classical Garside structure coincides with its set of suffixes.
If $s\in S$ with $s\preccurlyeq r_{X,t}$, then one has $s\preccurlyeq r_{X,t}\preccurlyeq\Delta_{X\cup\{t\}}$,
 so $s\in X\cup\{t\}$.
 On the other hand, $\Delta_X r_{X,t}$ is a simple element by \autoref{L:ribbon_lcm}, hence square-free, and $\Delta_X\succcurlyeq u$ for all $u\in X$, so $s\notin X$ and part~(b) is shown.
Turning to the proof of (a), as $t\notin X$, one has $t\not\preccurlyeq\Delta_X$ and thus $r_{X,t}\neq 1$ by \autoref{L:ribbon_lcm}. This means that~$r_{X,t}$ must start with some letter, which by part~(b) must necessarily be~$t$.
\end{proof}

We define the \emph{support} of a positive element~$\alpha\in A_S$, denoted by~$\mbox{supp}(\alpha)$, as the set of generators~$s\in S$ that appear in any positive word representing~$\alpha$.
This is well defined by two reasons:
Firstly, two positive words represent the same element in~$A_S$ if and only if one can transform the former into the latter by repeatedly applying the relations in the presentation of~$A_S$, that is, if and only if the elements are the same in the monoid~$A_S^+$ defined by the same presentation as~$A_S$ \cite{Par}.
Secondly, due to the form of the relations in the presentation of~$A_S^+$, applying a relation to a word does not modify the set of generators that appear, so all words representing an element of~$A_S^+$ involve the same set of generators.

For a not necessarily positive element $\alpha\in A_S$, we define its {\em support}, using the $np$-normal form $\alpha=x^{-1}y=x_s^{-1}\cdots x_1^{-1} y_1\cdots y_t$, as $\mbox{supp}(\alpha)=\mbox{supp}(x)\cup \mbox{supp}(y)$.

\begin{lemma}\label{L:simple_plus_letter}
Let~$\alpha$ be a simple element (with respect to the usual Garside structure) in an Artin--Tits group~$A_S$ of spherical type.
Let~$t, s\in S$. Then:
$$
\left. \begin{array}{l}
  t\not\preccurlyeq \alpha \\
  t\preccurlyeq \alpha s
\end{array}
\right\} \Rightarrow  \alpha s = t\alpha
$$
\end{lemma}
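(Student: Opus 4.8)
The plan is to prove the statement by strong induction on the exponent sum (equivalently, the letter length) $|\alpha|$ of the simple element $\alpha$. If $\alpha=1$, then $t\preccurlyeq s$ forces $s=t$ (both are atoms), and indeed $\alpha s=s=t=t\alpha$; so from now on assume $\alpha\neq 1$ and that the statement holds for every simple element shorter than $\alpha$. Fix a letter $a\in S$ with $a\preccurlyeq\alpha$; note $a\neq t$, for otherwise $t\preccurlyeq\alpha$. Since we are in spherical type, $m(a,t)<\infty$, so the join $a\vee t$ exists and equals the dihedral Garside element $\Delta_{\{a,t\}}=\underbrace{atat\cdots}_{m(a,t)}=\underbrace{tata\cdots}_{m(a,t)}$. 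Set $p=\alpha\wedge(a\vee t)$, the longest common left‑divisor; it is simple, hence square‑free, and since $a\preccurlyeq p$ it is the alternating word $\underbrace{atat\cdots}_{|p|}$. If $|p|=m(a,t)$ then $a\vee t\preccurlyeq\alpha$, whence $t\preccurlyeq\alpha$, a contradiction; therefore $1\le|p|\le m(a,t)-1$, and I write $\alpha=p\alpha'$ with $\alpha'$ simple.

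Let $\ell$ be the last letter of $p$ and $\ell'$ the other element of $\{a,t\}$, so $p\ell'=\underbrace{atat\cdots}_{|p|+1}$ is still a left‑divisor of $a\vee t$. The central observation is that $\alpha'$ begins with neither $a$ nor $t$: it cannot begin with $\ell$, since $\alpha=p\alpha'$ is square‑free and $p$ ends in $\ell$; and it cannot begin with $\ell'$, since then $p\ell'\preccurlyeq\alpha$, contradicting the maximality of $p$ as a common left‑divisor of $\alpha$ and $a\vee t$. In particular $\ell\not\preccurlyeq\alpha'$ and $\ell'\not\preccurlyeq\alpha'$. Now from $t\preccurlyeq\alpha s$ and $a\preccurlyeq\alpha\preccurlyeq\alpha s$ we get $a\vee t\preccurlyeq\alpha s=p\alpha's$; writing $a\vee t=pq$ with $q$ the alternating tail starting in $\ell'$, left cancellation gives $q\preccurlyeq\alpha's$, hence $\ell'\preccurlyeq\alpha's$. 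Since $|\alpha'|<|\alpha|$, the induction hypothesis applies to $\alpha'$ with the letters $\ell'$ and $s$, yielding $\alpha's=\ell'\alpha'$, and therefore $\alpha s=p\,\ell'\,\alpha'$.

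It remains to identify $p\ell'$. From $\alpha s=p\ell'\alpha'$ together with $a\preccurlyeq\alpha s$ and $t\preccurlyeq\alpha s$ we again obtain $a\vee t\preccurlyeq p\ell'\alpha'$. If $|p|+1<m(a,t)$, then $p\ell'$ is a proper left‑divisor of $a\vee t$, say $a\vee t=p\ell' q'$ with $q'$ the alternating tail starting in $\ell$; cancelling $p\ell'$ gives $q'\preccurlyeq\alpha'$, hence $\ell\preccurlyeq\alpha'$, contradicting the observation above. Hence $|p|+1=m(a,t)$, so $p\ell'=\underbrace{atat\cdots}_{m(a,t)}=\Delta_{\{a,t\}}$, while also $tp=t\,\underbrace{atat\cdots}_{m(a,t)-1}=\underbrace{tata\cdots}_{m(a,t)}=\Delta_{\{a,t\}}$; thus $p\ell'=tp$ and $\alpha s=p\ell'\alpha'=tp\alpha'=t\alpha$, completing the induction.

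The step I expect to be the main obstacle is isolating the right auxiliary element and seeing that the induction closes: one must recognise that $p=\alpha\wedge(a\vee t)$, after the factorisation $\alpha=p\alpha'$, produces a genuinely smaller instance of the \emph{same} statement (for $\alpha'$, with $t$ replaced by $\ell'$), and one must run the two short extremality arguments — one bounding $|p|\le m(a,t)-1$ using $t\not\preccurlyeq\alpha$, the other bounding $|p|\ge m(a,t)-1$ using $t\preccurlyeq\alpha s$ — that together pin down $|p|$ and let the dihedral braid relation finish the job. Everything else is routine cancellation in the Artin monoid $A_S^+$; in particular, no case distinction on $m(a,t)$ or on whether $t\preccurlyeq\beta$ is needed.
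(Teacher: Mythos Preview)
Your proof is correct and takes essentially the same approach as the paper's: both induct on $|\alpha|$, pick a letter $a\preccurlyeq\alpha$, factor off the maximal alternating $\{a,t\}$-prefix of~$\alpha$ (your $p=\alpha\wedge(a\vee t)$ is exactly the paper's $\rho_1\cdots\rho_k$), apply the induction hypothesis to the cofactor, and then use square-freeness of~$\alpha$ to force the length of this prefix to be $m(a,t)-1$, so that the dihedral relation finishes. The only cosmetic difference is that you establish upfront that $\alpha'$ begins with neither $\ell$ nor $\ell'$, whereas the paper uses the $\ell'$-half implicitly (maximality of~$k$) and invokes the $\ell$-half (the square-free argument) only at the end.
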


\begin{proof}
This result follows immediately from~\cite[Lemma~1]{Dig}, but we will nevertheless provide a proof.
As the relations defining~$A_S^+$ are homogeneous, the word length of~$\alpha$ is well defined.
We proceed by induction on the length of~$\alpha$.
If $\alpha=1$ the result is trivially true, so suppose $\alpha\neq 1$ and that the result is true for shorter elements.

Let $a\in S$ such that $a\preccurlyeq \alpha$. There is a relation in~$A_S$ of the form $atat\cdots = tata\cdots$, where the words on each side have length $m=m(a,t)$.
Let us denote by~$\rho_i$ the~$i$-th letter of the word $atat\cdots$, for $i=1,\ldots,m$.
That is, $\rho_i=a$ if~$i$ is odd, and $\rho_i=t$ if~$i$ is even.
Recall that $\rho_1\cdots \rho_m= a\vee t$.
Also, $\rho_1\cdots \rho_m = t \rho_1\cdots \rho_{m-1}$.

We have $t\preccurlyeq \alpha s$ and $a\preccurlyeq \alpha \preccurlyeq \alpha s$, so $a\vee t =\rho_1 \cdots \rho_m \preccurlyeq \alpha s$.
Notice that $\rho_1\cdots \rho_m = a\vee t\not\preccurlyeq \alpha$ (as $t\not\preccurlyeq \alpha$), but $\rho_1=a\preccurlyeq \alpha$.
Hence, there is some~$k$, where $0<k<m$, such that $\rho_1\cdots \rho_k \preccurlyeq \alpha$ and $\rho_1\cdots \rho_{k+1}\not \preccurlyeq \alpha$.

Write $\alpha=\rho_1\cdots \rho_k \alpha_0$. Then $\rho_{k+1}\not\preccurlyeq \alpha_0$, but $\rho_{k+1}\preccurlyeq \alpha_0 s$. By induction hypothesis, $\alpha_0 s = \rho_{k+1} \alpha_0$.
Hence $\alpha s = \rho_1\cdots \rho_{k+1} \alpha_0$, where $k+1\leq m$.

We claim that $k+1=m$.
Otherwise, as $\rho_1\cdots \rho_m \preccurlyeq \alpha s = \rho_1\cdots \rho_{k+1} \alpha_0$, we would have $\rho_{k+2} \preccurlyeq \alpha_0$, and then $\rho_1\cdots \rho_k \rho_{k+2} \preccurlyeq \alpha$, which is not possible as  $\rho_k=\rho_{k+2}$ and~$\alpha$ is simple (thus square-free).

Hence $k+1=m$ and $\alpha s=\rho_1\cdots \rho_m \alpha_0 = t \rho_1 \cdots \rho_{m-1} \alpha_0 = t \alpha$.
\end{proof}

\begin{remark}
The above result is not true if~$\alpha$ is not simple (with the usual Garside structure).
As an example, consider the Artin--Tits group $\langle a,b \mid abab=baba \rangle$, and the elements $\alpha=aaba$ and $t=s=b$. We have $b\not\preccurlyeq \alpha$, but
$$
    \alpha b = aabab = ababa = babaa,
$$
so $b\preccurlyeq \alpha b$, but $\alpha b\neq b\alpha$ as $\alpha=aaba\neq abaa$.
\end{remark}

We end this section with an important property concerning the central elements~$z_P$.

\begin{lemma}~\label{L:commuting cP_cQ}
Let~$P$ and~$Q$ be parabolic subgroups of an Artin--Tits group~$A_S$ of spherical type.
Then the following are equivalent:
\begin{enumerate}[\quad\upshape 1.]
 \item
   $z_P z_Q = z_Q z_P$.
 \item
   $(z_P)^m (z_Q)^n = (z_Q)^n (z_P)^m$ for some $n,m\neq 0$.
 \item
   $(z_P)^m (z_Q)^n = (z_Q)^n (z_P)^m$ for all $n,m\neq 0$.
\end{enumerate}
\end{lemma}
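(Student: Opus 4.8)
The plan is to reduce everything to the implication \emph{(2) $\Rightarrow$ (1)}. Indeed \emph{(3) $\Rightarrow$ (2)} and \emph{(3) $\Rightarrow$ (1)} are trivial (take $m=n=1$ in the latter), and \emph{(1) $\Rightarrow$ (3)} is immediate: if $z_P$ and $z_Q$ commute they generate an abelian subgroup, in which all powers of the generators commute. So the whole statement follows once \emph{(2) $\Rightarrow$ (1)} is proved.

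The device I would use for \emph{(2) $\Rightarrow$ (1)} is the claim that, for every parabolic subgroup $R$ of $A_S$ and every nonzero integer $k$, one has $Z_{A_S}(z_R^k)=Z_{A_S}(z_R)$. Granting this, if $(z_P)^m(z_Q)^n=(z_Q)^n(z_P)^m$ then $z_Q^n\in Z_{A_S}(z_P^m)=Z_{A_S}(z_P)$, hence $z_P\in Z_{A_S}(z_Q^n)=Z_{A_S}(z_Q)$, i.e. $z_Pz_Q=z_Qz_P$. To prove the claim, note that $Z_{A_S}(z_R)\subseteq Z_{A_S}(z_R^k)$ is obvious, and that $Z_{A_S}(z_R)=N_{A_S}(R)$ by the case $P=Q$ of \autoref{L:parabolic=central_element}. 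For the reverse inclusion take $g\in Z_{A_S}(z_R^k)$ and set $R'=gRg^{-1}$; then $z_{R'}=gz_Rg^{-1}$ and $z_{R'}^k=gz_R^kg^{-1}=z_R^k$, so it suffices to prove the rigidity statement that two parabolic subgroups $R,R'$ with $z_R^k=z_{R'}^k$ must coincide (equivalently, again by \autoref{L:parabolic=central_element}, that $z_R=z_{R'}$): this gives $g\in N_{A_S}(R)=Z_{A_S}(z_R)$, as desired.

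For the rigidity statement I would conjugate so that $R=A_X$ is standard, so that $z_R=z_X$ is a positive element whose support is exactly $X$, while $z_{R'}$ is a conjugate of the positive element $z_Y$ of some standard parabolic $A_Y$. The target is to show that $z_{R'}$, being a root of $z_X^k=z_{R'}^k\in A_X^+$, actually lies in $A_X$ and is central there; once $z_{R'}\in Z(A_X)$, the equality $z_X^k=z_{R'}^k$ forces $z_X=z_{R'}$, because $Z(A_X)$ is free abelian. Carrying this out is where the work lies: it calls for comparing the $np$-normal form of $z_{R'}$ in $A_S$ with its $np$-normal form inside a standard parabolic, using the support criterion for belonging to a standard parabolic recalled in \autoref{S:Garside}, controlling the conjugating element through Godelle's ribbons (\autoref{L_ribbon_permutation}, \autoref{L:ribbon_prefix}), and probably a uniqueness-of-roots-up-to-conjugacy input for Artin--Tits groups of spherical type to first identify $R$ and $R'$ as conjugate parabolics of the same type.

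I expect this rigidity statement to be the only real obstacle: uniqueness of roots by itself only yields the conclusion up to an ambiguity lying inside $Z_{A_S}(z_R^k)$, which is exactly the group we are trying to identify, so pinning that ambiguity down is the crux. Everything else is bookkeeping with the correspondence between parabolic subgroups and their distinguished central elements.
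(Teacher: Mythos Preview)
Your reduction to the claim $Z_{A_S}(z_R^k)=Z_{A_S}(z_R)$ is exactly the structure of the paper's proof, and your derivation of (1) from this claim is correct. Where you diverge is in how the centralizer equality is established. You reformulate it as a rigidity statement --- that $z_R^k=z_{R'}^k$ forces $R=R'$ --- and then leave this unproved, acknowledging it as ``the only real obstacle'' and listing possible ingredients (normal forms, ribbons, uniqueness of roots up to conjugacy). As you yourself note, uniqueness of roots up to conjugacy is circular here, since the ambiguity lives precisely in the centralizer you are trying to identify; and the other ingredients you mention do not obviously assemble into a proof without further input. So as written the argument has a genuine gap.

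The paper fills exactly this gap, and does so without the rigidity detour, by invoking a result of Godelle \cite[Proposition~2.1]{God}: for $X,Y\subseteq S$ and $m\neq 0$, one has $u^{-1}(z_X)^m u\in A_Y$ if and only if $u=vy$ with $y\in A_Y$ and $v^{-1}Xv=Y$. Taking $Y=X$, this says directly that any $u$ centralizing $(z_X)^m$ factors as $u=vy$ with $y\in A_X$ (hence centralizing $z_X$) and $v$ permuting $X$ (hence also centralizing $z_X$); so $u$ centralizes $z_X$. This gives $Z_{A_S}((z_X)^m)=Z_{A_S}(z_X)$ in one line, and the general parabolic case follows by conjugating first $P$ and then $Q$ to standard parabolics. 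Incidentally, your rigidity statement is an immediate consequence of the same result of Godelle, so the missing piece in your sketch is precisely this citation --- which is already used elsewhere in the paper, underlying \autoref{L:parabolic=central_element}.
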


\begin{proof}
If~$z_P$ and~$z_Q$ commute, it is clear that~$(z_P)^m$ and~$(z_Q)^n$ commute for every $n,m\neq 0$.
Conversely, suppose that~$(z_P)^m$ and~$(z_Q)^n$ commute for some $n,m\neq 0$.

By Godelle~\cite[Proposition 2.1]{God}, if $X,Y\subseteq S$ and $m\neq 0$, then $u^{-1} (z_X)^m u \in A_Y$ holds if and only if $u=vy$ with $y\in A_Y$ and $v^{-1}Xv=Y$.

Hence, if $u^{-1}(z_X)^m u = (z_X)^m$ we can take $Y=X$, so~$y$ commutes with~$z_X$ and~$v$ induces a permutation of~$X$, which implies that $v^{-1}A_Xv=A_X$ and then $v^{-1}z_X v = z_X$.
Therefore $u^{-1}z_X u = y^{-1}v^{-1}(z_X) vy = y^{-1} z_X y = z_X$.

Now recall that $(z_P)^m (z_Q)^n = (z_Q)^n (z_P)^m$.
Since $\alpha^{-1} P \alpha =A_X$ for some $\alpha\in A_S$ and some $X\subseteq S$, we can conjugate the above equality by~$\alpha$ to obtain $(z_X)^m (\alpha^{-1} z_Q \alpha)^n = (\alpha^{-1} z_Q \alpha)^n (z_X)^m$, which by the argument in the previous paragraph implies that $z_X (\alpha^{-1} z_Q \alpha)^n = (\alpha^{-1} z_Q \alpha)^n z_X$.
Conjugating back, we get $z_P (z_Q)^n = (z_Q)^n z_P$.

Now take $\beta\in A_S$ such that $\beta^{-1}Q \beta = A_Y$ for some $Y\subseteq S$.
Conjugating the equality from the previous paragraph by~$\beta$, we obtain $(\beta^{-1} z_P \beta) (z_Y)^n = (z_Y)^n(\beta^{-1} z_P \beta)$, which implies $(\beta^{-1} z_P \beta) z_Y = z_Y (\beta^{-1} z_P \beta)$.
Conjugating back, we finally obtain $z_Pz_Q=z_Qz_P$.
\end{proof}

\section{Cyclings, twisted cyclings and summit sets}\label{S:CyclingsAndFriends}

Let $(A_S, A_S^+, \Delta)$ be any Garside structure for~$A_S$, and let $\alpha\in A_S$ be an element whose left normal form is $\alpha=\Delta^p \alpha_1\cdots \alpha_r$ with~$r>0$.

The \emph{initial factor} of~$\alpha$ is the simple element $\iota(\alpha)= \tau^{-p}(\alpha_1)$.
Thus, $\alpha=\iota(\alpha)\Delta^p \alpha_2\cdots \alpha_r$.
The \emph{cycling} of~$\alpha$ is the conjugate of~$\alpha$ by its initial factor, that is,
$$
    c(\alpha)=\Delta^{p} \alpha_2\cdots \alpha_r \iota(\alpha).
$$
This expression is not necessarily in left normal form, so in order to apply a new cycling, one must first compute the left normal form of~$c(\alpha)$ to know the new conjugating element $\iota(c(\alpha))$.
If $r=0$, that is, if $\alpha=\Delta^p$, we just define $c(\Delta^p)=\Delta^p$.

The \emph{twisted cycling} of~$\alpha$ is defined as $\widetilde c(\alpha)=\tau^{-1}(c(\alpha))$. It is the conjugate of~$\alpha$ by $\iota(\alpha)\Delta^{-1}$, which is the {\it inverse} of a simple element.
(One can also think of~$\widetilde c$ as a left-conjugation by a simple element.)
Notice that the conjugating element is
$$
\iota(\alpha)\Delta^{-1} = \Delta^{p}\alpha_1 \Delta^{-(p+1)}.
$$

The following lemma tells us that twisted cycling is actually more natural than cycling from the point of view of the mixed normal form.
% Notice that its proof also works when using the alternative Garside structure with Garside element~$\Delta^n$, for some $n>1$.
%It will be important to understand twisted cycling in the case in which~$\alpha$ is given by its $np$-normal form.

\begin{lemma}~\label{L:twisted_cycling}
If $\alpha=x^{-1}y=x_s^{-1}\cdots x_1^{-1} y_1\cdots y_t$ is the $np$-normal form of an element in~$A_S$, and $x\neq 1$, then the conjugating element for twisted cycling is precisely~$x_s^{-1}$.
Hence,
$$
\widetilde c(\alpha) = x_{s-1}^{-1}\cdots x_1^{-1} y_1\cdots y_t x_s^{-1}.
$$
\end{lemma}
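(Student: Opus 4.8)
The plan is to identify the left normal form of~$\alpha$ directly from its $np$-normal form, and then to read off the element $\iota(\alpha)\Delta^{-1}$ which, by \autoref{S:CyclingsAndFriends}, conjugates~$\alpha$ to its twisted cycling. Throughout, $\Delta$ denotes the Garside element, $\tau(u)=\Delta^{-1}u\Delta$, and $\partial(u)=u^{-1}\Delta$ for a simple element~$u$; recall that $\tau$ has order~$1$ or~$2$ and $\partial(\Delta)=1$.

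First I would invoke the description, recalled in \autoref{S:Garside}, of how the $np$-normal form relates to the left normal form. Since $x\neq 1$, one has $\inf_S(\alpha)=-s$, and rewriting each $x_i^{-1}=\partial(x_i)\Delta^{-1}$ and collecting all copies of $\Delta^{-1}$ on the left yields
$$\alpha=\Delta^{-s}\,\widetilde x_s\cdots\widetilde x_1\,y_1\cdots y_t,\qquad \widetilde x_i=\tau^{-i}(\partial(x_i)),$$
which is precisely the left normal form of~$\alpha$ (a factor $\widetilde x_i$ with $x_i=\Delta$ equals~$1$ and is deleted; the positive tail is empty when $y=1$). I would dispose at once of the case in which~$\alpha$ is a power of~$\Delta$: there both sides of the asserted identity reduce to~$\Delta^{-s}$ and there is nothing to prove. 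In the remaining case $x\neq\Delta^s$, hence $x_s\neq\Delta$, so $\widetilde x_s=\tau^{-s}(\partial(x_s))$ is a non-trivial simple element distinct from~$\Delta$ and is the leading simple factor of the left normal form; that is, with $p=\inf_S(\alpha)=-s$ we have $\alpha_1=\widetilde x_s$.

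It then remains only to compute. The initial factor is $\iota(\alpha)=\tau^{-p}(\alpha_1)=\tau^{s}(\widetilde x_s)=\partial(x_s)=x_s^{-1}\Delta$, whence the conjugating element for twisted cycling is $\iota(\alpha)\Delta^{-1}=x_s^{-1}\Delta\,\Delta^{-1}=x_s^{-1}$, which is the first assertion. Since twisted cycling is conjugation by this element, $\widetilde c(\alpha)=(x_s^{-1})^{-1}\,\alpha\,(x_s^{-1})=x_s\,\alpha\,x_s^{-1}$; substituting $\alpha=x_s^{-1}x_{s-1}^{-1}\cdots x_1^{-1}y_1\cdots y_t$ and cancelling the leading $x_sx_s^{-1}$ gives $\widetilde c(\alpha)=x_{s-1}^{-1}\cdots x_1^{-1}y_1\cdots y_t\,x_s^{-1}$, as claimed. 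The whole argument is essentially bookkeeping, and I do not anticipate a real obstacle; the one point needing attention is the passage from the $np$-normal form to the left normal form — verifying that $\widetilde x_s$ is genuinely the leading factor~$\alpha_1$, equivalently that $x_s\neq\Delta$ (exactly the non-degeneracy hypothesis that~$\alpha$ is not a power of~$\Delta$) — together with tracking the powers of~$\tau$ correctly, using $\tau^{-1}=\tau$ when convenient.
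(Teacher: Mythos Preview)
Your proof is correct and follows essentially the same approach as the paper: both identify $\alpha_1=\widetilde x_s=\tau^{-s}(\partial(x_s))$ from the relation between the $np$-normal form and the left normal form, and then compute $\iota(\alpha)\Delta^{-1}=\partial(x_s)\Delta^{-1}=x_s^{-1}$. You are slightly more explicit about the degenerate case $\alpha=\Delta^{-s}$ than the paper, but the core computation is identical.
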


\begin{proof}
We have seen that the conjugating element for the twisted cycling of~$\alpha$ is $\Delta^{-s}\alpha_1 \Delta^{s-1}$, where~$\alpha_1$ is the first non-$\Delta$ factor in its left normal form, that is, $\alpha_1= \widetilde x_s = \tau^{-s}(\partial(x_s)) = \Delta^s \partial(x_s) \Delta^{-s}$.

Thus, the conjugating element is $\Delta^{-s} \Delta^s \partial(x_s) \Delta^{-s} \Delta^{s-1} = \partial(x_s)\Delta^{-1} = x_s^{-1}$.
\end{proof}

If $\alpha=\Delta^p \alpha_1\cdots \alpha_r$ is in left normal form and $r>0$, the \emph{decycling} of~$\alpha$ is the conjugate of~$\alpha$ by~$\alpha_r^{-1}$, that is,
$$
    d(\alpha)=\alpha_r\Delta^p \alpha_1\cdots \alpha_{r-1}.
$$
If $r=0$, that is, if $\alpha=\Delta^p$, we define $d(\Delta^p)=\Delta^p$.
Cyclings (or twisted cyclings) and decyclings are used to compute some important finite subsets of the conjugacy class of an element, which we will refer to with the common name of \emph{summit sets}.

\begin{definition}
Given $\alpha\in A_S$, we denote by~$C^+(\alpha)$ the set of positive conjugates of~$\alpha$.
Notice that this set is always finite, and it could be empty.
\end{definition}

\begin{definition}{\rm\cite{ElM}}
Given $\alpha \in A_S$, the {\em Super Summit Set} of~$\alpha$, denoted by~$SSS(\alpha)$, is the set of all conjugates of~$\alpha$ with maximal infimum and minimal supremum.
Equivalently,~$SSS(\alpha)$ is the set of conjugates of~$\alpha$ with minimal canonical length.
\end{definition}

\begin{definition}{\rm\cite{Geb}}
Given $\alpha\in A_S$, the {\em Ultra Summit Set} of~$\alpha$, denoted by~$USS(\alpha)$, is the set of all elements $\beta\in SSS(\alpha)$ such that $c^{k}(\beta)=\beta$ for some $k>0$.
\end{definition}

It is easy to deduce from the definitions that~$c$ and~$\tau$ commute; indeed, for any $\beta\in A_S$, the conjugating elements for the conjugations $\beta\mapsto c(\tau(\beta))$ and $\beta\mapsto \tau(c(\beta))$ are identical.
Therefore, $c^{k}(\beta)=\beta$ for some $k>0$ holds if and only if $\widetilde c^{t}(\beta)=\beta$ for some $t>0$.
This means that twisted cycling can be used to define the Ultra Summit Set.

We will also need two other types of summit sets:

\begin{definition}{\rm\cite{Lee}}
Given~$\alpha\in A_S$, the {\em Reduced Super Summit Set} of~$\alpha$ is
$$
  RSSS(\alpha)= \{x\in \alpha^G : c^{k}(x)=x=d^t(x) \mbox{ for some } k,t>0\}.
$$
\end{definition}

As above, one can use twisted cycling instead of cycling to define $RSSS(\alpha)$.

\begin{definition}{\rm\cite{BGG}}
Given~$\alpha\in A_S$, the  {\em stable ultra summit set} of~$\alpha$ is $ SU(\alpha)= \{x\in \alpha^G : x^m\in USS(x^m)\; \forall m\in \mathbb Z\}$.
\end{definition}

It is well known~\cite{ElM,Geb} that when applying iterated cycling, starting with $\alpha\in A_S$, one obtains an element~$\alpha'$ whose infimum is maximal in its conjugacy class.
Then, by applying iterated decycling to~$\alpha'$ one obtains an element~$\alpha''$ whose supremum is minimal in its conjugacy class, so $\alpha''\in SSS(\alpha)$.
Finally, when applying iterated cycling to~$\alpha''$ until the first repeated element~$\alpha'''$ is obtained, one has $\alpha'''\in USS(\alpha)$.
If one then applies iterated decycling to~$\alpha'''$ until the first repeated element, one obtains $\widehat\alpha \in RSSS(\alpha)$.

In order to conjugate~$\widehat\alpha$ to~$SU(\alpha)$, as explained in~\cite{BGG}, one just needs to apply the conjugating elements for iterated cycling or decycling of suitable powers of~$\widehat \alpha$. It follows that all the above summit sets are nonempty (and finite), and moreover we have the following:

\begin{lemma}\label{L:conjugating_elements_to_summit_sets}
Let~$\alpha\in A_S$, and let~$I$ be either $SSS(\alpha)$, or $USS(\alpha)$, or $RSSS(\alpha)$, or $SU(\alpha)$. One can conjugate~$\alpha$ to $\beta\in I$ by a sequence of conjugations:
$$
   \alpha=\alpha_0 \rightarrow \alpha_1\rightarrow \cdots \rightarrow \alpha_m=\beta
$$
where, for $i=0,\ldots,m-1$, the conjugating element from $\alpha_{i}$ to $\alpha_{i+1}$ has the form $\alpha_i^p\Delta^q\wedge \Delta^r$ for some integers $p$, $q$ and $r$.
\end{lemma}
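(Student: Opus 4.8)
The plan is to analyse each of the three standard operations used to reach the summit sets --- cycling, twisted cycling, and decycling --- and to verify that in each case the conjugating element has the required shape $\alpha_i^p\Delta^q\wedge\Delta^r$ for suitable integers $p,q,r$. Since, as recalled just before the statement, every one of the sets $SSS(\alpha)$, $USS(\alpha)$, $RSSS(\alpha)$, $SU(\alpha)$ is reached by a finite sequence of such operations (cyclings, decyclings, and cyclings/decyclings of powers, which are then translated back into conjugations of the element itself), it suffices to treat these elementary building blocks one at a time and then concatenate.

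First I would treat cycling. If $\beta=\Delta^p\beta_1\cdots\beta_r$ is in left normal form with $r>0$, the conjugating element for cycling is the initial factor $\iota(\beta)=\tau^{-p}(\beta_1)$. I claim $\iota(\beta)=\beta\Delta^{-p-1}\wedge\Delta^{\,?}$ can be expressed in the required form. Indeed $\beta\Delta^{-p}\wedge\Delta^{1}$: we have $\Delta^p\preccurlyeq\beta$, and the first simple factor of $\Delta^{-p}\beta=\beta_1\cdots\beta_r$ after $\tau$-shifting is exactly $\iota(\beta)$; more directly, $\iota(\beta)$ is the largest simple prefix of $\beta\Delta^{-p}$ (suitably $\tau$-shifted), which one writes as $\beta\Delta^{-p}\wedge\Delta$. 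Taking $p_0=1$, $q_0=-p$, $r_0=1$ gives the form $\beta^{1}\Delta^{-p}\wedge\Delta^{1}$; when $r=0$ the cycling is trivial and the conjugating element is $1=\beta^0\Delta^0\wedge\Delta^0$. For twisted cycling, \autoref{L:twisted_cycling} identifies the conjugating element as $x_s^{-1}$ where $\alpha=x^{-1}y$ is the $np$-normal form; since $x_s^{-1}=\partial_S(x_s)\Delta^{-1}$ and $x_s$ is the first simple factor of the positive part of $x$, a parallel computation expresses $x_s^{-1}$ as $\alpha^{p}\Delta^{q}\wedge\Delta^{r}$ --- here one uses $\inf_S(\alpha)=-s$ to pull out the right power of $\Delta$. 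Decycling is the mirror image: the conjugating element is $\beta_r^{-1}$, the inverse of the last simple factor, and by the symmetry between prefix and suffix orders (and the fact, recalled in \autoref{S:Garside}, that $\Delta^p\preccurlyeq\alpha\preccurlyeq\Delta^q$ is equivalent to $\Delta^q\succcurlyeq\alpha\succcurlyeq\Delta^p$) this too has the stated shape, now using $\sup_S(\alpha)$ to fix the $\Delta$-powers.

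Next I would deal with the passage to $SU(\alpha)$. By \cite{BGG}, one conjugates $\widehat\alpha\in RSSS(\alpha)$ to $SU(\alpha)$ by applying to $\widehat\alpha$ the conjugating element for a cycling or decycling of a power $\widehat\alpha^{\,m}$. The point is that if $\rho$ is the conjugating element for cycling $\widehat\alpha^{\,m}$, then by the cycling analysis above $\rho=(\widehat\alpha^{\,m})^{p}\Delta^{q}\wedge\Delta^{r}=\widehat\alpha^{\,mp}\Delta^{q}\wedge\Delta^{r}$, which is already of the required form with the exponent $mp$ in place of $p$; the same applies to decycling of a power. One must be slightly careful that the normal forms are taken with respect to whichever Garside structure $(A_S,A_S^+,\Delta^N)$ is in play, but since $\Delta^N$ is itself a power of $\Delta$, an expression $\beta^{p}(\Delta^N)^{q}\wedge(\Delta^N)^{r}$ is visibly of the form $\beta^{p}\Delta^{Nq}\wedge\Delta^{Nr}$; so the shape is preserved. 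Finally, composing a chain of such conjugations, $\alpha=\alpha_0\to\alpha_1\to\cdots\to\alpha_m=\beta$, each step has the advertised form with $\alpha_i$ being the current element, which is exactly the statement.

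The main obstacle, I expect, is the bookkeeping in expressing the three elementary conjugating elements ($\iota(\beta)$, $x_s^{-1}$, $\beta_r^{-1}$) precisely as $\alpha^{p}\Delta^{q}\wedge\Delta^{r}$ with the correct integers --- in particular pinning down which power of $\Delta$ must be ``divided out'' before intersecting with a simple element, and checking that the resulting exponents are exactly the infimum/supremum of the element at that stage. This is where the characterisation of $\inf_S$ and $\sup_S$ as the extremal $p,q$ with $\Delta^p\preccurlyeq\alpha\preccurlyeq\Delta^q$ (from \cite{ElM}, recalled in \autoref{S:Garside}) does the real work. None of this is deep, but it is the part that requires care rather than merely invoking the construction of the summit sets.
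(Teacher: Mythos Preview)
Your outline is correct and follows essentially the same route as the paper: reduce to showing that the conjugating element for a single cycling or decycling of (a power of) the current element has the form $\alpha^{p}\Delta^{q}\wedge\Delta^{r}$, then concatenate. Your formula for cycling, $\iota(\beta)=\beta\,\Delta^{-\inf(\beta)}\wedge\Delta$, is exactly the paper's.

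The one place where your sketch is looser than the paper is decycling. Appealing to ``symmetry between prefix and suffix orders'' does not directly yield an expression of the form $\alpha^{p}\Delta^{q}\wedge\Delta^{r}$, because~$\wedge$ here is the \emph{prefix} meet; a symmetric argument would produce a suffix meet instead. The paper avoids this by a short direct manipulation: if $\beta=\Delta^{m}\beta_{1}\cdots\beta_{n}$ in left normal form, then $\Delta^{m+n-1}\wedge\beta=\Delta^{m}\beta_{1}\cdots\beta_{n-1}$, whence
\[
   \beta_{n}^{-1}=\beta^{-1}\bigl(\Delta^{m+n-1}\wedge\beta\bigr)=\beta^{-1}\Delta^{m+n-1}\wedge\Delta^{0},
\]
using that left multiplication preserves~$\wedge$. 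This gives the required shape with $p=-1$, $q=\sup(\beta)-1$, $r=0$. Your separate treatment of twisted cycling is harmless but unnecessary, since the summit sets are reached using cycling and decycling alone; everything else in your proposal (handling powers by absorbing~$m$ into the exponent~$p$, and passing between Garside structures $(A_S,A_S^+,\Delta^N)$ by replacing $q,r$ with $Nq,Nr$) is exactly right.
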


\begin{proof}
We just need to show that the conjugating elements, for either cycling or decycling, of a power~$x^k$ of an element~$x$ have the form $x^p\Delta^q\wedge \Delta^r$ for some integers~$p$, $q$ and~$r$.

If~$x^k$ is a power of~$\Delta$, the conjugating element is trivial, so the result holds. Otherwise, suppose that $\Delta^m x_1\cdots x_n$ is the left normal form of~$x^k$. Then $x^k \Delta^{-m} = \tau^{-m}(x_1)\cdots \tau^{-m}(x_n)$, where the latter decomposition is in left normal form. Hence, $x^k \Delta^{-m} \wedge \Delta = \tau^{-m}(x_1) = \iota(x^k)$. So the conjugating element for cycling has the desired form.

On the other hand, $\Delta^{m+n-1} \wedge x^k = \Delta^m x_1\cdots x_{n-1}$. So $x_n^{-1} = x^{-k} (\Delta^{m+n-1} \wedge x^k) = x^{-k}\Delta^{m+n-1}\wedge 1$. Since $x_n^{-1}$ is the conjugating element for decycling of~$x^k$, the result follows.
\end{proof}

The sets~$C^+(\alpha)$,~$SSS(\alpha)$,~$USS(\alpha)$,~$RSSS(\alpha)$ and ~$SU(\alpha)$ share a common important property, which is usually called {\it convexity}. Recall that~$\beta^x$ means $x^{-1}\beta x$ and that~$\wedge$ denotes the meet operation in the lattice associated to the prefix partial order~$\preccurlyeq$.

\begin{lemma}\label{L:convexity}{\rm\cite[Propositions 4.8 and 4.12]{FGM}, \cite[Theorem 1.18]{Geb}}
Let~$\alpha\in A_S$, and let~$I$ be either $C^+(\alpha)$, or $SSS(\alpha)$, or $USS(\alpha)$, or~$RSSS(\alpha)$, or~$SU(\alpha)$.
If $\alpha, \alpha^x, \alpha^y \in I$ then $\alpha^{x\wedge y}\in I$.
\end{lemma}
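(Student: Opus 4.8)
The plan is to treat $C^{+}(\alpha)$ and $SSS(\alpha)$ by a direct lattice computation, to reduce $RSSS(\alpha)$ and $SU(\alpha)$ to the case $I=USS(\alpha)$, and to handle $USS(\alpha)$ with Gebhardt's transport map. The soft ingredient I will use repeatedly is that, since $a\preccurlyeq b$ means $a^{-1}b\in A_{S}^{+}$, left translation $\gamma\mapsto\delta\gamma$ is an automorphism of the lattice $(A_{S},\preccurlyeq)$; and so is the automorphism $\tau$ of $A_{S}$ given by conjugation by $\Delta$, since $\tau$ permutes $S$ and hence preserves $A_{S}^{+}$. In particular $\delta a\wedge\delta b=\delta(a\wedge b)$, $\tau(a)\wedge\tau(b)=\tau(a\wedge b)$, and $\gamma\Delta^{k}=\Delta^{k}\tau^{k}(\gamma)$. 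For $C^{+}(\alpha)$ and $SSS(\alpha)$ no reduction of $x,y$ to positive elements is needed; for the transport step it is, and there it is harmless, since replacing $x,y$ by $\Delta^{N}x,\Delta^{N}y$ with $N$ large and even leaves $\alpha^{x}$, $\alpha^{y}$ and $\alpha^{x\wedge y}$ unchanged while making the conjugators positive.

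\emph{The cases $C^{+}(\alpha)$ and $SSS(\alpha)$.} I would start from the facts that $\alpha^{a}$ is positive iff $a\preccurlyeq\alpha a$, and more generally that $\inf(\alpha^{a})\geq p$ iff $a\preccurlyeq\alpha a\Delta^{-p}$ and $\sup(\alpha^{a})\leq q$ iff $a\preccurlyeq\alpha^{-1}a\Delta^{q}$ (immediate from the equivalence $\Delta^{p}\preccurlyeq\gamma\iff\gamma\succcurlyeq\Delta^{p}$ recalled earlier). Writing $u=x\wedge y$ and $p=\inf(\alpha)$, from $u\preccurlyeq x\preccurlyeq\alpha x\Delta^{-p}$ and $u\preccurlyeq y\preccurlyeq\alpha y\Delta^{-p}$ one obtains
\[
u\preccurlyeq(\alpha x\Delta^{-p})\wedge(\alpha y\Delta^{-p})=\alpha\Delta^{-p}\bigl(\tau^{-p}(x)\wedge\tau^{-p}(y)\bigr)=\alpha\Delta^{-p}\tau^{-p}(u)=\alpha u\Delta^{-p},
\]
so $\inf(\alpha^{u})\geq\inf(\alpha)$; replacing $\alpha,\Delta^{-p}$ by $\alpha^{-1},\Delta^{q}$ gives $\sup(\alpha^{u})\leq\sup(\alpha)$. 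For $C^{+}$ this already shows $\alpha^{u}$ is positive; for $SSS$, since $\alpha\in SSS(\alpha)$ means $\inf(\alpha)$ is maximal and $\sup(\alpha)$ minimal in the conjugacy class, $\alpha^{u}$ attains both extrema, hence $\alpha^{u}\in SSS(\alpha)$.

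\emph{Reducing $RSSS(\alpha)$ and $SU(\alpha)$ to $USS(\alpha)$.} Decycling preserves $SSS(\alpha)$ and carries a transport map formally dual to the one for cycling, so the analogue of $USS$-convexity will hold for the set $\overline{USS}(\alpha)$ of decycling-periodic elements of $SSS(\alpha)$; since $RSSS(\alpha)=USS(\alpha)\cap\overline{USS}(\alpha)$, it is closed under $\wedge$ once both factors are. For $SU(\alpha)$: if $\alpha,\alpha^{x},\alpha^{y}\in SU(\alpha)$ then, for every nonzero $m$, the elements $\alpha^{m}$, $(\alpha^{m})^{x}=(\alpha^{x})^{m}$ and $(\alpha^{m})^{y}=(\alpha^{y})^{m}$ lie in $USS(\alpha^{m})$, so applying $USS$-convexity to $\alpha^{m}$ with the conjugators $x,y$ (and using \autoref{L:conjugating_elements_to_summit_sets} to control the conjugators if necessary) gives $(\alpha^{u})^{m}=(\alpha^{m})^{u}\in USS(\alpha^{m})=USS((\alpha^{u})^{m})$ for every $m$, i.e. $\alpha^{u}\in SU(\alpha)$.

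\emph{The case $USS(\alpha)$, which is the main obstacle.} By the $SSS$ case $\alpha^{u}\in SSS(\alpha)$, so what remains is to show $\alpha^{u}$ is periodic under cycling, equivalently under twisted cycling. The plan is to invoke the transport map of \cite{Geb}: for $\beta\in SSS(\alpha)$ and positive $c$ with $\beta^{c}\in SSS(\alpha)$, the conjugator $c^{(1)}=\iota(\beta)^{-1}c\,\iota(\beta^{c})$ is again positive, satisfies $c(\beta)^{c^{(1)}}=c(\beta^{c})$, is $\preccurlyeq$-monotone in $c$, and is eventually periodic on the conjugators joining elements of $USS(\alpha)$. I would then choose $N$ to be a common period for the cycling orbits of $\alpha,\alpha^{x},\alpha^{y}$ and for these transports, iterate transport $N$ times on $x$, $y$ and $u$, and use monotonicity together with $u\preccurlyeq x$, $u\preccurlyeq y$ to conclude that $c^{N}$ fixes $\alpha^{u}$, hence $\alpha^{u}\in USS(\alpha)$. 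The hard part is exactly this last step: in contrast to the $C^{+}$ and $SSS$ cases, the compatibility of $\wedge$ with cycling is not formal but rests on the positivity and monotonicity of the transport map. This is the content of \cite[Propositions~4.8 and~4.12]{FGM} and \cite[Theorem~1.18]{Geb} (the $RSSS$ case going back to \cite{Lee} and the $SU$ case to \cite{BGG}), which the paper may simply quote.
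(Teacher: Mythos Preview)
Your proposal is essentially correct, and you rightly anticipate that the paper simply quotes the cited references rather than giving a proof; indeed the paper offers no argument for $C^{+}$, $SSS$ or $USS$ beyond the citations, and only supplies short remarks explaining how convexity of $SU(\alpha)$ and $RSSS(\alpha)$ follow from convexity of $USS$.

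One point of comparison is worth making. For $RSSS(\alpha)$ you invoke a transport map ``formally dual'' to cycling, applied to decycling, and intersect the resulting set $\overline{USS}(\alpha)$ with $USS(\alpha)$. The paper's remark takes a slicker route: it observes that the conjugating element for decycling of~$\beta$ equals the conjugating element for twisted cycling of~$\beta^{-1}$, so $\beta$ lies in a closed decycling orbit if and only if $\beta^{-1}$ lies in a closed cycling orbit. Hence $\beta\in RSSS(\alpha)$ precisely when $\beta\in USS(\alpha)$ and $\beta^{-1}\in USS(\alpha^{-1})$, and convexity of $RSSS(\alpha)$ follows by applying $USS$-convexity once to~$\alpha$ and once to~$\alpha^{-1}$. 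This avoids developing any dual transport theory. Your reduction of $SU(\alpha)$ to $USS$ is exactly what the paper's remark says.

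A minor caution on your $USS$ sketch: from $u^{(N)}\preccurlyeq x^{(N)}=x$ and $u^{(N)}\preccurlyeq y^{(N)}=y$ you only get $u^{(N)}\preccurlyeq u$, not $u^{(N)}=u$, so the conclusion $c^{N}(\alpha^{u})=\alpha^{u}$ does not follow immediately; Gebhardt's actual argument in \cite{Geb} uses an additional finiteness/stabilisation step. Since you defer this case to the reference anyway, this does not affect the overall correctness of your write-up.
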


\begin{remark}
Convexity of $SU(\alpha)$ is not shown in~\cite{BGG}, but it follows immediately from convexity of~$USS(\alpha)$.
\end{remark}

\begin{remark}
Convexity of $RSSS(\alpha)$ also follows from convexity of $USS(\alpha)$, after the following observation: An element $x$ belongs to a closed orbit under decycling if and only if~$x^{-1}$ belongs to a closed orbit under twisted cycling, hence to a closed orbit under cycling. Actually, the conjugating element for twisted cycling of~$x^{-1}$ equals the conjugating element for decycling of~$x$. Hence the elements in $RSSS(\alpha)$ are those elements~$x$, conjugate to~$\alpha$, such that both~$x$ and~$x^{-1}$ belong to closed orbits under cycling. This implies the convexity of~$RSSS(\alpha)$. Note that it also implies that $SU(\alpha) \subseteq RSSS(\alpha)$.
\end{remark}

The set~$I$ is usually obtained by computing the directed graph~$\mathcal G_I$, whose vertices are the elements of~$I$, and whose arrows correspond to \emph{minimal positive conjugators}.
That is, there is an arrow labelled~$x$ starting from a vertex~$u$ and finishing at a vertex~$v$ if and only if:
\begin{enumerate}
\item $x\in A_S^+$;
\item $u^x=v$; and
\item $u^y\notin I$ for every non-trivial proper prefix~$y$ of~$x$.
\end{enumerate}

Thanks to the convexity property, one can see that the graph~$\mathcal G_I$ is finite and connected, and that the label of each arrow is a simple element.
This is why that graph can be computed starting with a single vertex, iteratively conjugating the known vertices by simple elements, until no new elements of~$I$ are obtained.
\medskip

We will need to work with different Garside structures $(A_S,A_S^+,\Delta_S^n)$ for distinct positive values of~$n$ simultaneously.
To distinguish the precise Garside structure we are using, we will write~$I_n$ instead of~$I$ for a given summit set. For instance, given $n\geq 1$ and $\alpha\in A_S$, we will write~$SSS_n(\alpha)$,~$USS_n(\alpha)$,~$RSSS_n(\alpha)$ and~$SU_n(\alpha)$ to refer to the super summit set, the ultra summit set, the reduced super summit set and the stable ultra summit set, respectively, of~$\alpha$ with respect to the Garside structure $(A_S,A_S^+,\Delta_S^n)$. Notice that all those sets are finite sets consisting of conjugates of~$\alpha$. Also, the set~$C^+(\alpha)$ is independent of the Garside structure under consideration, so $C_n^+(\alpha)=C^+(\alpha)$ for every $n>0$.

We will now see that for every type of summit set~$I$, there is always some element which belongs to~$I_n$ for every $n>1$.

\begin{definition}
Let $\alpha\in A_S$, and let~$I$ be either $SSS$, or $USS$, or~$RSSS$, or~$SU$.
Then we define $I_\infty(\alpha)=\bigcap_{n\geq 1}{I_n}(\alpha)$.
\end{definition}

\begin{proposition}\label{P:I_infty_is_nonempty}
For every $\alpha\in A_S$, the set $I_\infty(\alpha)$ is nonempty.
\end{proposition}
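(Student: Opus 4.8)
The idea is to show that the sets $I_n(\alpha)$ are "eventually nested" in an appropriate sense, so that their intersection is nonempty by a compactness/pigeonhole argument. First I would fix $\alpha\in A_S$ and recall that each $I_n(\alpha)$ is a finite, nonempty subset of the conjugacy class $\alpha^{A_S}$, and that the whole conjugacy class, while infinite, is exhausted by the elements of bounded canonical length with respect to any single Garside structure. The key observation I would try to establish is a \emph{stability} statement: if $\beta\in I_N(\alpha)$ for some sufficiently large $N$, then $\beta\in I_n(\alpha)$ for all $n$ in a suitable range, or at least that membership in $I_n$ for one large $n$ forces membership in $I_m$ for infinitely many $m$. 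The natural mechanism is the relation between the Garside structures $(A_S,A_S^+,\Delta_S^n)$ and $(A_S,A_S^+,\Delta_S)$ described at the end of \autoref{S:Garside}: the $np$-normal form with respect to $\Delta_S^n$ is obtained from the one with respect to $\Delta_S$ by grouping simple factors into blocks of $n$ and padding. This tells us how $\inf_n$, $\sup_n$, and canonical length $\ell_n$ transform, and hence how $SSS_n$, $USS_n$, etc. relate to the data of the classical normal form.

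Concretely, I would argue as follows. By \autoref{L:conjugating_elements_to_summit_sets} (applied with the classical structure) there is some $\beta_0\in SU_1(\alpha)$, and for each $n$ there is some $\beta_n\in I_n(\alpha)$; all these lie in $\alpha^{A_S}$. The strategy is to take $\beta$ realizing the minimum, over the conjugacy class, of the canonical length $\ell_1(\,\cdot\,)$ in the \emph{classical} structure among elements that also minimize $\ell_1$ of all their powers (this is essentially the defining property of $SU_1$, combined with $RSSS$-type closure under decycling of the inverse, as in the remarks after \autoref{L:convexity}). For such a $\beta$, I claim that once $n$ exceeds the (finitely many relevant) canonical lengths $\ell_1(\beta^m)/|m|$ appearing in the super summit sets, the block-grouping description forces $\beta\in SSS_n(\beta)$, and the periodicity of $\beta$ under classical cycling/twisted cycling (which is what $USS_1$ and $RSSS_1$ and $SU_1$ encode) is inherited by cycling with respect to $\Delta_S^n$, giving $\beta\in USS_n(\beta)$, $\beta\in RSSS_n(\beta)$, and $\beta\in SU_n(\beta)$. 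The point is that there are only finitely many "shapes" of normal form that can occur for elements of minimal complexity in the conjugacy class, so "large enough $n$" can be chosen uniformly, and then all $n\ge$ that value give membership simultaneously; combined with the fact that $I_n(\alpha)\subseteq I_1(\alpha)$-type containments hold for the finitely many small $n$ after possibly replacing $\beta$, one extracts a single element in $\bigcap_{n\ge 1} I_n(\alpha)$.

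The cleanest route is probably to handle the four cases in decreasing generality: prove it for $SSS$, then deduce $USS$, then $RSSS$, then $SU$, using the containments $SU(\alpha)\subseteq RSSS(\alpha)$ and $RSSS(\alpha)\subseteq SSS(\alpha)$ and the relation between periodicity under cycling in the $\Delta_S$ and $\Delta_S^n$ structures noted just before the definition of $USS$ (that $c$ and $\tau$ commute, and twisted cycling can be used interchangeably). For $SSS$ I would directly verify: if $\beta$ has classical $np$-normal form $x^{-1}y$ with $x=x_1\cdots x_s$, $y=y_1\cdots y_t$ and $\beta$ is conjugacy-minimal, then for $n\ge \max(s,t)$ one has $\beta\in SSS_n(\beta)$ because the $np$-normal form of $\beta$ with respect to $\Delta_S^n$ is just $x^{-1}y$ with $x,y$ now simple, of $\ell_n$ equal to $0$, $1$, or $2$, which is minimal; minimality in the conjugacy class carries over because any conjugate with smaller $\ell_n$ would, after ungrouping, have smaller classical complexity. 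Then $SSS_\infty(\beta)\neq\emptyset$, and since $\beta\in\alpha^{A_S}$ with $SSS_\infty$ invariant along the conjugacy class in the relevant sense, $SSS_\infty(\alpha)\neq\emptyset$.

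**Main obstacle.** The delicate point is not the existence of a single good $n$ but the \emph{uniformity}: one must rule out the possibility that, as $n\to\infty$, the minimizing elements drift through infinitely many distinct conjugates, each optimal only for its own $n$, with empty overall intersection. This is why one really needs to start from an element that is simultaneously optimal for \emph{all} powers in the classical structure — i.e.\ an element of $SU_1(\alpha)$ (or at least $RSSS_1(\alpha)$) — so that its good behavior is already "stable" and merely gets \emph{repackaged}, not improved, when passing to $\Delta_S^n$. Pinning down exactly why optimality with respect to $\Delta_S^n$ for all large $n$ follows from optimality with respect to $\Delta_S$ of all powers — i.e.\ the precise translation of the block-grouping bookkeeping into the inequalities defining each summit set — is the technical heart of the argument, and I expect it to require a careful but elementary case analysis using \autoref{L:twisted_cycling} and the description of $np$-normal forms across Garside structures.
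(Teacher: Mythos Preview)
Your approach is genuinely different from the paper's, and for $I = SSS$ your idea actually works cleanly: since $\inf_n(\gamma) = \lfloor \inf_1(\gamma)/n\rfloor$ and $\sup_n(\gamma) = \lceil \sup_1(\gamma)/n\rceil$ for any $\gamma$, monotonicity of floor and ceiling gives $SSS_1(\alpha)\subseteq SSS_n(\alpha)$ for every $n$, so in fact $SSS_\infty(\alpha)=SSS_1(\alpha)\neq\emptyset$ immediately --- simpler than what the paper does in that case.

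However, for $USS$, $RSSS$ and $SU$ there is a genuine gap. Your plan hinges on the claim that if $\beta\in SU_1(\alpha)$ (so $\beta$ and all its powers are periodic under classical cycling and decycling), then $\beta$ is periodic under $c_n$ for large $n$; but you give no mechanism for this, and the ``block-grouping'' description of $np$-normal forms does not translate into a comparable description of the \emph{cycling dynamics}. The conjugating element for $c_n$ is the initial factor with respect to $\Delta_S^n$, and there is no reason the $c_n$-orbit of $\beta$ should close up on $\beta$ itself just because the $c_1$-orbit does --- $\beta$ could sit in the pre-periodic tail of its $c_n$-orbit. Moreover, even granting the claim for large $n$, your treatment of small $n$ (``after possibly replacing $\beta$'') is not an argument: replacing $\beta$ to land in $I_n$ for one small $n$ may well throw it out of $I_m$ for the large $m$ you had already secured, and you are back to the drifting problem you yourself flag as the main obstacle.

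The paper's argument avoids all of this by never analyzing the $c_n$-dynamics directly. Instead it shows inductively that $I_{\leq N}:=\bigcap_{n\leq N} I_n(\alpha) \neq\emptyset$: starting from $\beta\in I_{\leq N-1}$, one conjugates $\beta$ into $I_N$ using the chain of conjugators from \autoref{L:conjugating_elements_to_summit_sets}, each of which has the form $\beta_i^p\Delta_S^{Nq}\wedge\Delta_S^{Nr}$. The key observation is that $\beta_i^{\beta_i^p\Delta_S^{Nq}}=\tau_S^{Nq}(\beta_i)$ and $\beta_i^{\Delta_S^{Nr}}=\tau_S^{Nr}(\beta_i)$ both lie in $I_n$ for every $n<N$ (since $\tau_S$ preserves each $I_n$), so by convexity (\autoref{L:convexity}) the conjugate by the meet does too. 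Thus one reaches $I_N$ without ever leaving $I_{\leq N-1}$, and the descending chain $I_{\leq 1}\supseteq I_{\leq 2}\supseteq\cdots$ of nonempty subsets of the finite set $I_1$ stabilizes. Convexity, together with the specific shape of the cycling/decycling conjugators, is the ingredient your plan is missing; without it, or something playing the same role, I do not see how your argument closes for $USS$, $RSSS$, or $SU$.
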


\begin{proof}
We write~$I_n$ for~$I_n(\alpha)$ for $n\in\N\cup\{\infty\}$.
For every $N\geq 1$, let $I_{\leq N}=\bigcap_{n=1}^{N}{I_n}$. We will show that $I_{\leq N}\neq\emptyset$ by induction on~$N$.

If $N=1$ then $I_{\leq N}=I_1$, which is known to be nonempty. We can then assume that $N>1$ and that there is an element $\beta \in I_{\leq (N-1)}$. Using the Garside structure $(A_S,A_S^+,\Delta_S^N)$, we can conjugate~$\beta$ to an element $\gamma\in I_N$ by applying some suitable conjugations
$$
  \beta=\beta_0 \rightarrow \beta_1 \rightarrow \cdots \rightarrow \beta_m = \gamma.
$$
By~\autoref{L:conjugating_elements_to_summit_sets}, for every $i=0,\ldots,m-1$ the conjugating element from~$\beta_i$ to~$\beta_{i+1}$ has the form $\beta_i^p \Delta_S^{Nq} \wedge \Delta_S^{Nr}$ for some integers $p$, $q$ and~$r$.  Recall that $\beta\in I_{\leq (N-1)}$. We claim that, if we apply such a conjugating element to~$\beta$, the resulting element $\beta_1$ will still belong to $I_{\leq (N-1)}$. Repeating the argument $m$~times, it will follow that $\gamma\in I_{\leq (N-1)}\cap I_N = I_{\leq N}$, so $I_{\leq N}\neq \emptyset$ as we wanted to show.

Notice that, if $\beta \in I_n$ for some~$n$, then $\beta^{\beta^p \Delta_S^{Nq}} =\beta^{\Delta_S^{Nq}}= \tau_S^{Nq}(\beta)\in I_n$, and also $\beta^{\Delta_S^{Nr}}=\tau_S^{Nr}(\beta)\in I_n$. By~\autoref{L:convexity}, it follows that $\beta_1\in I_n$. Hence, if $\beta\in I_{\leq (N-1)}$ then $\beta_1\in I_{\leq (N-1)}$, which shows the claim.

Therefore, we have shown that $I_{\leq N}\neq \emptyset$ for every $N\geq 1$. As the set~$I_1$ is finite, and we have a monotonic chain
$$
   I_1 = I_{\leq 1} \supseteq I_{\leq 2} \supseteq I_{\leq 3} \supseteq \cdots,
$$
this chain must stabilize for some $N\geq 1$, so $I_{\infty}=I_{\leq N}\neq \emptyset$.
\end{proof}
\medskip

We finish this section by recalling an important tool for studying properties of the Ultra Summit Set: the \emph{transport map}.
Let again $(A_S, A_S^+, \Delta)$ be any Garside structure for~$A_S$.

\begin{definition}{\rm\cite{Geb}}
Let $\alpha\in A_S$ and $v,w\in USS(\alpha)$.
Let $x\in A_S$ be an element conjugating~$v$ to~$w$, that is, $x^{-1}vx=w$.
For every $i\geq 0$, let $v^{(i)}=c^i(v)$ and $w^{(i)}=c^i(w)$.
The {\em transport} of~$x$ at~$v$ is the element $x^{(1)}=\iota(v)^{-1}x\:\iota(w)$, which conjugates $v^{(1)}$ to $w^{(1)}$.

We can define iteratively~$x^{(i)}$ as the transport of~$x^{(i-1)}$ at~$v^{(i-1)}$.
That is, the {\em $i$-th transport} of~$x$ at~$v$, denoted~$x^{(i)}$, is the following conjugating element from~$v^{(i)}$ to~$w^{(i)}$:
$$
  x^{(i)}= \left(\iota(v)\iota(v^{(1)})\cdots \iota(v^{(i-1)})\right)^{-1}x\: \left(\iota(w)\iota(w^{(1)})\cdots \iota(w^{(i-1)})\right).
$$
\end{definition}

\begin{lemma}\label{L:transport}{\rm\cite[Lemma 2.6]{Geb}}
Let $\alpha\in A_S$ and $v,w\in USS(\alpha)$.
For every conjugating element~$x$ such that $x^{-1}vx=w$ there exists some integer $N>0$ such that $v^{(N)}=v$, $w^{(N)}=w$ and $x^{(N)}=x$.
\end{lemma}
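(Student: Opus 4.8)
The plan is to exploit periodicity of cycling together with injectivity of the transport map. Since $v,w\in USS(\alpha)$, there are $p,q>0$ with $c^{p}(v)=v$ and $c^{q}(w)=w$; let $N_1$ be a common multiple of $p$ and $q$, so that $v^{(N_1)}=v$ and $w^{(N_1)}=w$. Set
$$
P=\iota(v)\iota(v^{(1)})\cdots\iota(v^{(N_1-1)}),\qquad Q=\iota(w)\iota(w^{(1)})\cdots\iota(w^{(N_1-1)}).
$$
These are positive, and $P^{-1}vP=v^{(N_1)}=v$, $Q^{-1}wQ=w^{(N_1)}=w$, so $P$ commutes with $v$ and $Q$ with $w$. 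Because the cycling orbit of $v$ has period dividing $N_1$ we have $\iota(v^{(jN_1+i)})=\iota(v^{(i)})$ for all $j\ge0$ and $0\le i<N_1$, and likewise for $w$; substituting this into the defining formula for the $i$-th transport yields
$$
x^{(kN_1)}=P^{-k}\,x\,Q^{k}\qquad(k\ge 0).
$$
Thus $x^{(kN_1)}=T^{k}(x)$, where $T\co A_S\to A_S$ is the bijection $T(y)=P^{-1}yQ$, and $T$ restricts to a bijection of the set $\{\,y\in A_S : y^{-1}vy=w\,\}$, since $P$ and $Q$ commute with $v$ and $w$ respectively. Granting that the forward orbit $\{T^{k}(x):k\ge 0\}$ is finite, we finish: some $T^{a}(x)=T^{b}(x)$ with $0\le a<b$, so by injectivity of $T$ one gets $x=T^{b-a}(x)$, and $N=(b-a)N_1>0$ satisfies $v^{(N)}=v$, $w^{(N)}=w$ and $x^{(N)}=x$.

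It therefore remains to show that the elements $x^{(kN_1)}$ lie in a finite subset of $A_S$. Since they all conjugate $v$ to $w$, it is enough to bound $\inf\bigl(x^{(kN_1)}\bigr)$ from below and $\sup\bigl(x^{(kN_1)}\bigr)$ from above: an element $y$ with $c\le\inf(y)$ and $\sup(y)\le c'$ has the form $\Delta^{j}y_1\cdots y_r$ with $c\le j\le c'$ and $r\le c'-c$ for proper simple factors $y_i$, and there are only finitely many such. We may assume $x$ is positive: $\Delta^{2}$ is central in $A_S$ (recall $\Delta_S^{2}$ is central, and $\Delta$ is a positive power of $\Delta_S$), so for $m$ large $x\Delta^{2m}\in A_S^{+}$, it conjugates $v$ to $\Delta^{-2m}w\Delta^{2m}=w$, and transport commutes with multiplication by the central element $\Delta^{2m}$, giving $(x\Delta^{2m})^{(i)}=x^{(i)}\Delta^{2m}$; hence $\inf$ and $\sup$ of the transports of $x$ are bounded if and only if those of $x\Delta^{2m}$ are. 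For positive $x$, the preservation properties of transport established by Gebhardt~\cite{Geb} show that each $x^{(i)}$ is again positive, so $\inf\bigl(x^{(i)}\bigr)\ge 0$; and the same analysis keeps $\sup\bigl(x^{(i)}\bigr)$ bounded above along the iteration, because $\sup\bigl(v^{(i)}\bigr)=\sup(v)$ and $\sup\bigl(w^{(i)}\bigr)=\sup(w)$ are constant on the periodic orbits. This gives the required bounds and completes the proof.

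The main obstacle is this last point: ensuring that iterated transport does not let the canonical length of the conjugating elements grow without bound. The identity $x^{(kN_1)}=P^{-k}xQ^{k}$ and the periodicity bookkeeping are elementary, but the boundedness of $\inf$ and $\sup$ genuinely uses Gebhardt's structural results on how transport interacts with left normal forms (that transport preserves positivity, is monotone for the prefix order, and is compatible with splitting off the power of $\Delta$); a fully self-contained argument would have to reprove those lemmas.
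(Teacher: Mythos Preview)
The paper does not prove this lemma; it is quoted verbatim from \cite[Lemma~2.6]{Geb}, with only the remark that one may assume~$x$ positive by multiplying by a central power of~$\Delta$. So your outline is not being compared against a proof in the paper but against Gebhardt's original argument, and in broad strokes your reduction---periodicity of cycling on $v$ and $w$, the formula $x^{(kN_1)}=P^{-k}xQ^{k}$, and finiteness of the orbit once $\inf$ and $\sup$ are bounded---is exactly the skeleton Gebhardt uses.

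There is, however, a genuine gap in your justification of the $\sup$ bound. You write that ``the same analysis keeps $\sup(x^{(i)})$ bounded above \ldots\ because $\sup(v^{(i)})$ and $\sup(w^{(i)})$ are constant''. That is not an argument: constancy of $\sup(v)$ and $\sup(w)$ along their cycling orbits says nothing directly about $\sup$ of the conjugator. What Gebhardt actually proves (and what you need) is that transport respects the prefix order, $a\preccurlyeq b \Rightarrow a^{(1)}\preccurlyeq b^{(1)}$, and that $\Delta^{(1)}=\Delta$ (since $\Delta$ conjugates $v$ to $\tau(v)$ and $\iota(v)^{-1}\Delta\,\iota(\tau(v))=\Delta$). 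From $1\preccurlyeq x\preccurlyeq \Delta^{k}$ one then gets $1\preccurlyeq x^{(i)}\preccurlyeq \Delta^{k}$ for all $i$, giving both bounds at once. You essentially concede this in your final paragraph, so your write-up is a correct sketch that, like the paper, ultimately defers the substance to \cite{Geb}; but the sentence offered for the $\sup$ bound should be replaced by the monotonicity-of-transport argument rather than left as is.
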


We remark that in~\cite{Geb} it is assumed that~$x$ is a positive element, but this is not a constraint:
Multiplying~$x$ by a suitable central power of the Garside element~$\Delta$, we can always assume that~$x$ is positive.

\autoref{L:transport} can be rewritten in the following way:

\begin{lemma}\label{L:conjugated_conjugating_elements}
Let $\alpha\in A_S$ and $v,w\in USS(\alpha)$.
Let~$m$ and~$n$ be the lengths of the orbits under cycling of~$v$ and~$w$, respectively. Denote by $C_t(v)$ (resp.\ $C_t(w)$) the product of~$t$ consecutive conjugating elements for cycling, starting with~$v$ (resp.\ starting with~$w$). Then, for every~$x$ such that $x^{-1}v x=w$, there is a positive common multiple~$N$ of~$m$ and~$n$ such that $x^{-1} C_{kN}(v)\: x = C_{kN}(w)$ for every $k>0$.
\end{lemma}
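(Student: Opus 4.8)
The plan is to reinterpret \autoref{L:transport} in terms of products of cycling conjugators. Since cycling conjugates an element by its initial factor, the conjugating element taking $v^{(j)}=c^{j}(v)$ to $v^{(j+1)}=c^{j+1}(v)$ is exactly $\iota(v^{(j)})$; hence $C_{t}(v)$, the product of the first $t$ consecutive cycling conjugators starting at~$v$, equals $\iota(v)\iota(v^{(1)})\cdots\iota(v^{(t-1)})$, and similarly $C_{t}(w)=\iota(w)\iota(w^{(1)})\cdots\iota(w^{(t-1)})$. Comparing with the definition of the $i$-th transport, this gives $x^{(i)}=C_{i}(v)^{-1}\,x\,C_{i}(w)$ for every $i\geq 0$.

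Next, I would apply \autoref{L:transport} to obtain an integer $N>0$ with $v^{(N)}=v$, $w^{(N)}=w$ and $x^{(N)}=x$ — replacing $x$ by $x\Delta^{M}$ with $\Delta^{M}$ a central power of~$\Delta$ first, if one wants the hypothesis of \autoref{L:transport} satisfied literally; this changes nothing, since a central element commutes with every $C_{t}(v)$ and $C_{t}(w)$ and does not affect the conjugation. Rewriting $x^{(N)}=x$ using the formula above yields $C_{N}(v)^{-1}\,x\,C_{N}(w)=x$, i.e.\ $x^{-1}C_{N}(v)\,x=C_{N}(w)$. Moreover $v^{(N)}=v$ forces $m\mid N$ and $w^{(N)}=w$ forces $n\mid N$, so $N$ is automatically a common multiple of~$m$ and~$n$.

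Finally, I would promote the identity from~$N$ to every multiple~$kN$. Because $v^{(N)}=v$, the sequence $(v^{(j)})_{j\geq 0}$ is $N$-periodic, so splitting the list of cycling conjugators into $k$ consecutive blocks of length~$N$ gives $C_{kN}(v)=C_{N}(v)^{k}$, and likewise $C_{kN}(w)=C_{N}(w)^{k}$. Raising $x^{-1}C_{N}(v)\,x=C_{N}(w)$ to the $k$-th power then gives $x^{-1}C_{kN}(v)\,x=C_{kN}(w)$ for all $k>0$, as desired.

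There is no genuinely hard step here: the content is purely the bookkeeping identification of the transport product $\iota(v)\iota(v^{(1)})\cdots$ with the product $C_{t}(v)$ of cycling conjugators, together with the elementary periodicity observation $C_{kN}(v)=C_{N}(v)^{k}$. The only point requiring a word of care is the positivity assumption in the statement of \autoref{L:transport}, which is dispatched by the remark following it.
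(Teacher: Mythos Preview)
Your argument is correct and follows essentially the same route as the paper's proof: both invoke \autoref{L:transport} to obtain $N$ with $v^{(N)}=v$, $w^{(N)}=w$, $x^{(N)}=x$, read off $m\mid N$ and $n\mid N$ from the first two, unwind the definition of transport to get $x^{-1}C_N(v)x=C_N(w)$ from the third, and then use periodicity to pass from $N$ to $kN$ via $C_{kN}(v)=C_N(v)^k$. Your version simply makes the bookkeeping identity $x^{(i)}=C_i(v)^{-1}xC_i(w)$ and the handling of the positivity hypothesis a bit more explicit than the paper does.
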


\begin{proof}
By~\autoref{L:transport}, there is some $N>0$ such that $v^{(N)}=v$, $w^{(N)}=w$ and $x^{(N)}=x$. The first property implies that~$N$ is a multiple of~$m$. The second one, that~$N$ is a multiple of~$n$. Finally, by definition of transport, the third property just means $x^{-1} C_N(v) x = C_N(w)$.

Now notice that, as~$N$ is a multiple of the length~$m$ of the orbit of~$v$ under cycling, one has $C_{kN}(v)=C_N(v)^k$ for every $k>0$. In the same way, $C_{kN}(w)=C_N(w)^k$ for every $k>0$. Therefore $x^{-1} C_{kN}(v) x = x^{-1} C_N(v)^k x =  C_N(w)^k = C_{kN}(w)$.
\end{proof}

\begin{corollary}\label{C:conjugated_conjugating_elements_twisted}
Let $\alpha\in A_S$ and $v,w\in USS(\alpha)$.
For every $t>0$, denote by $\widetilde C_t(v)$ (resp.\ $\widetilde C_t(w)$) the product of~$t$ consecutive conjugating elements for twisted cycling, starting with~$v$ (resp.\ starting with~$w$). Then, for every~$x$ such that $x^{-1}v x=w$, there is a positive integer~$M$ such that $x^{-1} \widetilde C_M(v)\: x = \widetilde C_M(w)$, where $\widetilde C_M(v)$ commutes with~$v$ and~$\widetilde C_M(w)$ commutes with~$w$.
\end{corollary}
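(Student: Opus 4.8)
The plan is to reduce the corollary to \autoref{L:conjugated_conjugating_elements} by first expressing the iterated twisted-cycling conjugators in terms of the iterated cycling conjugators. Recall that $v,w\in USS(\alpha)$, so the cycling orbits of $v$ and $w$ are closed, of lengths $m$ and $n$, say. Write $a_i=\iota(c^i(v))$, so that $C_t(v)=a_0a_1\cdots a_{t-1}$ by definition. First I would show, by induction on $i$ and using that $c$ commutes with $\tau$ (equivalently, that $\iota$ commutes with $\tau$), that $\widetilde c^{\,i}(v)=\tau^{-i}(c^i(v))$; consequently the conjugating element for the $(i+1)$-st twisted cycling in the orbit of $v$ is
\[
  \iota\bigl(\widetilde c^{\,i}(v)\bigr)\Delta^{-1}
  =\tau^{-i}(a_i)\,\Delta^{-1}
  =\Delta^{i}\,a_i\,\Delta^{-(i+1)} .
\]
Multiplying these for $i=0,\dots,t-1$, the interior powers of $\Delta$ telescope and one is left with the clean identity
\[
  \widetilde C_t(v)=C_t(v)\,\Delta^{-t},
\]
valid for every $t\ge 1$ (and trivially for $t=0$); the identity $\widetilde C_t(w)=C_t(w)\Delta^{-t}$ holds for the same reason. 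I expect establishing this telescoping identity, and getting the bookkeeping of the $\tau$-conjugations right, to be essentially the only real content of the proof.

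With the identity in hand, I would apply \autoref{L:conjugated_conjugating_elements} to $x$, $v$, $w$ to get a common multiple $N$ of $m$ and $n$ with $x^{-1}C_{kN}(v)\,x=C_{kN}(w)$ for all $k>0$, and then set $M=2N$. Then $M$ is an even multiple of both $m$ and $n$; in particular $\Delta^{M}$ is central in $A_S$ (recall $\Delta^{2}$ is central, as $\Delta$ is a power of $\Delta_S$ and $\Delta_S^{2}$ is central, so $\Delta^{M}$ is central). Using the identity together with the centrality of $\Delta^{-M}$ to slide it past $x$,
\[
  x^{-1}\widetilde C_M(v)\,x
  =x^{-1}C_M(v)\,\Delta^{-M}\,x
  =\bigl(x^{-1}C_M(v)\,x\bigr)\,\Delta^{-M}
  =C_M(w)\,\Delta^{-M}
  =\widetilde C_M(w),
\]
which is the displayed equality in the statement.

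Finally, for the commutation assertions: since $M$ is a multiple of $m$, the element $C_M(v)$ conjugates $v$ to $c^M(v)=v$ and therefore commutes with $v$; as $\Delta^{-M}$ is central, $\widetilde C_M(v)=C_M(v)\Delta^{-M}$ also commutes with $v$, and symmetrically $\widetilde C_M(w)$ commutes with $w$ using that $M$ is a multiple of $n$. The only delicate point, apart from the telescoping identity, is the insistence that $M$ be \emph{even}: this is precisely what allows $\Delta^{-M}$ to be moved past $x$ and what makes $C_M(v)\Delta^{-M}$ commute with $v$. This is the (minor) obstacle I would flag explicitly.
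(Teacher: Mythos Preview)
Your proof is correct and follows essentially the same route as the paper's: both establish $\widetilde C_t(v)=C_t(v)\Delta^{-t}$, invoke \autoref{L:conjugated_conjugating_elements}, and then choose $M$ so that $\Delta^{-M}$ is central in order to slide it past $x$ and deduce the commutation with $v$ and $w$. The only cosmetic differences are that the paper states the telescoping identity as immediate from the definitions (rather than writing out the induction), and it phrases the choice of $M$ as ``take $k$ big enough so that $\Delta^{kN}$ is central'' instead of your explicit $M=2N$; your version is a perfectly good instantiation.
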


\begin{proof}
By definition of cycling and twisted cycling, we have $\widetilde C_t(v) = C_t(v) \Delta^{-t}$ and $\widetilde C_t(w) = C_t(w) \Delta^{-t}$ for every $t>0$.

We know from~\autoref{L:conjugated_conjugating_elements} that there is some positive integer~$N$ such that $x^{-1} C_{kN}(v)\: x = C_{kN}(w)$ for every $k>0$. If we take $k$ big enough so that $\Delta^{kN}$ is central, and we denote $M=kN$, we have:
$$
   x^{-1} \widetilde C_{M}(v)\: x= x^{-1} C_M(v)\Delta^{-M} x = x^{-1} C_M(v)\: x \Delta^{-M} = C_M(w)\Delta^{-M} = \widetilde C_M(w).
$$
Moreover, from~\autoref{L:conjugated_conjugating_elements} we know that $C_M(v)$ commutes with~$v$, hence $\widetilde C_M(v)= C_M(v)\Delta^{-M}$ also commutes with~$v$. In the same way, $\widetilde C_M(w)$ commutes with~$w$.
\end{proof}

\section{Positive conjugates of elements in a parabolic subgroup}

Suppose that an element~$\alpha$ belongs to a proper parabolic subgroup $P\subsetneq A_S$ and has a positive conjugate.
We will show in this section that then \emph{all} positive conjugates of~$\alpha$ belong to a proper \emph{standard} parabolic subgroup, determined by their corresponding supports.

Moreover, the support of a positive conjugate of~$\alpha$ will be shown to be {\it preserved by conjugation}, in the sense that if~$\alpha'$ and~$\alpha''$ are two positive conjugates of~$\alpha$, whose respective supports are~$X$ and~$Y$, then every element conjugating~$\alpha'$ to~$\alpha''$ will also conjugate~$A_X$ to~$A_Y$.

This will allow us to define a special parabolic subgroup associated to~$\alpha$, which will be the smallest parabolic subgroup (by inclusion) containing~$\alpha$.
\medskip

Given $X\subsetneq S$, we will denote by~$c_X$ and~$c_S$ the cycling operations, respectively, in the Garside groups~$A_X$ and~$A_S$.
Analogous notation will be used for the other concepts introduced in \autoref{S:CyclingsAndFriends}.
\medskip

\begin{lemma}\label{L:cycling_parabolic}
If $\alpha\in A_X\subsetneq A_S$ where $X \subsetneq S$, then $c_S(\alpha)\in A_Y$, where either $Y=X$ or $Y=\tau_S(X)$.
\end{lemma}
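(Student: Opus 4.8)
The plan is to compute $c_S(\alpha)$ directly from the mixed normal form of~$\alpha$, using the fact that, since $\alpha\in A_X$ with $X\subsetneq S$, the $np$-normal form of~$\alpha$ in~$A_X$ is also its $np$-normal form in~$A_S$ (as recorded in~\autoref{S:Garside}). So I would write this common $np$-normal form as $\alpha=x^{-1}y=x_s^{-1}\cdots x_1^{-1}y_1\cdots y_t$, keeping in mind that every factor~$x_i$ and~$y_j$ lies in~$A_X$ and, viewed inside~$A_S$, is a simple element, and that $x_1\cdots x_s$ and $y_1\cdots y_t$ are in left normal form in~$A_S$. The case $\alpha=1$ is trivial, so assume $\alpha\neq 1$ and split according to whether $x=1$ or $x\neq 1$.

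\textbf{Case $x=1$.} Then $\alpha=y_1\cdots y_t$ is a positive element. Since $X\subsetneq S$, the element~$\Delta_S$ does not belong to~$A_X$ (its support is all of~$S$), so no factor~$y_j$ equals~$\Delta_S$, and moreover $\Delta_S\not\preccurlyeq\alpha$ by the same support argument; hence $\alpha=y_1\cdots y_t$ is the left normal form of~$\alpha$ in~$A_S$, with $\inf_S(\alpha)=0$. Therefore $\iota_S(\alpha)=y_1\in A_X$, and $c_S(\alpha)=y_1^{-1}\alpha\,y_1=y_2\cdots y_t\,y_1\in A_X$, so the conclusion holds with $Y=X$.

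\textbf{Case $x\neq 1$.} Here I would invoke~\autoref{L:twisted_cycling}: the conjugating element for the twisted cycling of~$\alpha$ (with respect to the classical Garside structure of~$A_S$) is precisely~$x_s^{-1}$. As $x_s\in A_X$, this gives $\widetilde c_S(\alpha)=x_s\,\alpha\,x_s^{-1}\in A_X$. Now recall that, by definition of twisted cycling, $c_S(\alpha)=\tau_S\bigl(\widetilde c_S(\alpha)\bigr)$, and that $\tau_S$ (conjugation by~$\Delta_S$) restricts to a permutation of~$S$, hence carries~$A_X$ onto~$A_{\tau_S(X)}$. Thus $c_S(\alpha)\in A_{\tau_S(X)}$, and the conclusion holds with $Y=\tau_S(X)$.

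\textbf{Main obstacle.} There is no deep difficulty here; the work is essentially bookkeeping resting on~\autoref{L:twisted_cycling} and on the relation $c_S=\tau_S\circ\widetilde c_S$. The two points that need care are (i) the inheritance of the $np$-normal form from~$A_X$ to~$A_S$, which guarantees that the relevant conjugator ($y_1$ or~$x_s^{-1}$) actually lies in~$A_X$, and (ii) keeping track of the twist~$\tau_S$, which is exactly what accounts for the alternative $Y=\tau_S(X)$ in the statement.
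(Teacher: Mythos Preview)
Your proof is correct and follows essentially the same route as the paper: split on whether the negative part of the $np$-normal form is trivial, use $\iota_S(\alpha)=y_1\in A_X$ in the first case, and invoke \autoref{L:twisted_cycling} together with $c_S=\tau_S\circ\widetilde c_S$ in the second. The paper's argument is identical in substance, with only cosmetic differences in presentation.
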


\begin{proof}
Let $x^{-1}y=x_s^{-1}\cdots x_1^{-1} y_1\cdots y_t$ be the $np$-normal form of~$\alpha$ in~$A_S$.
As this is precisely the $np$-normal form of~$\alpha$ in~$A_X$, one has that $\mbox{supp}(x)\subseteq X$ and $\mbox{supp}(y)\subseteq X$.

Suppose that $x=1$, so $\alpha=y_1\cdots y_t$.
It is clear that $y_1\neq \Delta_S$, as $X\subsetneq S$.
Then $c_S(\alpha) = \alpha^{y_1} = y_2\cdots y_t y_1$, hence $c_S(\alpha)\in A_X$.
Notice that if $y_1\neq \Delta_X$ or if~$\alpha$ is a power of~$\Delta_X$, then one has $c_S(\alpha)=c_X(\alpha)$, but otherwise we may have $c_S(\alpha)\neq c_X(\alpha)$.

Now suppose that $x\neq 1$.
We saw in \autoref{L:twisted_cycling} that
$$
 \widetilde c_S(\alpha)=x_{s-1}^{-1}\cdots x_1^{-1}y_1\cdots y_t x_s^{-1},
$$
hence $\widetilde c_S(\alpha)= \tau_S^{-1}(c_S(\alpha))\in A_X$, which implies that $c_S(\alpha)\in A_{\tau_S(X)}$.
Notice that in this case~$c_S(\alpha)$ is not necessarily equal to~$c_X(\alpha)$, but $\widetilde c_S(\alpha)=\widetilde c_X(\alpha)$.
\end{proof}

%Later on we will need the following generalization of the above result, dealing with powers of the conjugated element.
%
%
%\begin{lemma}\label{L:cycling_parabolic_power}
%If $\alpha\in A_X\subsetneq A_S$ where $X \subsetneq S$, let $k>0$, and let $c=\iota(\alpha^k)$ be the conjugating element for cycling of~$\alpha^k$.
%Then $\alpha^c\in A_Y$, where either $Y=X$ or $Y=\tau_S(X)$.
%\end{lemma}
%
%\begin{proof}
%We can assume that $\alpha$ is non-trivial.
%Let $x^{-1}y=x_s^{-1}\cdots x_1^{-1} y_1\cdots y_t$ be the $np$-normal form of~$\alpha^k$ in~$A_S$.
%As this is precisely the $np$-normal form of~$\alpha^k$ in~$A_X$, all factors belong to $A_X$.
%
%If $x=1$ then $c=y_1\in A_X$, hence $\alpha^c \in A_X$.
%Otherwise, we saw in~\autoref{L:twisted_cycling} that~$x_s^{-1}\in A_X$ is the conjugating element for twisted cycling of~$\alpha^k$, which implies $c=x_s^{-1}\Delta_S$ by definition of twisted cycling.
%As $\alpha^{x_s^{-1}}\in A_X$, we have $\alpha^c\in A_{\tau_S(X)}$ as claimed.
%\end{proof}
%

\begin{lemma}\label{L:decycling_parabolic}
If $\alpha\in A_X\subsetneq A_S$ where $X \subsetneq S$, then $d_S(\alpha)\in A_Y$, where either $Y=X$ or $Y=\tau_S(X)$.
\end{lemma}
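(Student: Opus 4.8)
The plan is to deduce this from \autoref{L:cycling_parabolic} by passing to inverses and using the duality between decycling and twisted cycling. The first step is to record the identity
$$
  d_S(\alpha) = \bigl(\widetilde c_S(\alpha^{-1})\bigr)^{-1}.
$$
Indeed, as noted in the remark following \autoref{L:convexity}, the conjugating element $g$ for the decycling of $\alpha$ (so that $d_S(\alpha)=g^{-1}\alpha g$) is simultaneously the conjugating element for the twisted cycling of $\alpha^{-1}$; hence $\widetilde c_S(\alpha^{-1})=g^{-1}\alpha^{-1}g=(g^{-1}\alpha g)^{-1}=d_S(\alpha)^{-1}$.

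The second step is to run the argument in the proof of \autoref{L:cycling_parabolic} for the element $\alpha^{-1}\in A_X$. Let $x^{-1}y$ be the $np$-normal form of $\alpha^{-1}$; since $\alpha^{-1}\in A_X\subsetneq A_S$ with $X\subsetneq S$, all of its factors lie in $A_X$ and, in particular, none of them equals $\Delta_S$. If $x\neq 1$, then by \autoref{L:twisted_cycling} the conjugating element for the twisted cycling of $\alpha^{-1}$ is the inverse $x_s^{-1}$ of a simple element of $A_X$, so $\widetilde c_S(\alpha^{-1})\in A_X$. If $x=1$, then $\alpha^{-1}=y_1\cdots y_t$ with $y_1\neq\Delta_S$, so $c_S(\alpha^{-1})=y_1^{-1}\alpha^{-1}y_1\in A_X$, and therefore $\widetilde c_S(\alpha^{-1})=\tau_S^{-1}\bigl(c_S(\alpha^{-1})\bigr)\in A_{\tau_S^{-1}(X)}$. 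Since $\tau_S$ is an involution, $\tau_S^{-1}(X)=\tau_S(X)$; so in both cases $\widetilde c_S(\alpha^{-1})\in A_Y$ for some $Y\in\{X,\tau_S(X)\}$, and taking inverses, $d_S(\alpha)=\bigl(\widetilde c_S(\alpha^{-1})\bigr)^{-1}\in A_Y$, as desired.

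I expect the only subtlety to be the bookkeeping between $\tau_S$ and $\tau_S^{-1}$, which is harmless because $\tau_S^2=\mathrm{id}$; the genuinely degenerate situations are trivial, since $\alpha\in A_X$ with $X\subsetneq S$ cannot equal $\Delta_S^{\,p}$ for $p\neq 0$ (a prefix $\Delta_S\preccurlyeq\alpha$ would force $\operatorname{supp}(\alpha)=S$) and $\alpha=1$ is immediate. It is worth noting that a direct proof mimicking \autoref{L:cycling_parabolic} — identifying the last simple factor $\alpha_r$ of the left normal form of $\alpha$, which is the conjugator for decycling — goes through painlessly when the positive part $y$ of the $np$-normal form of $\alpha$ is non-trivial (then $\alpha_r=y_t\in A_X$ and $d_S(\alpha)=y_t\alpha y_t^{-1}\in A_X$), but is awkward precisely when $\alpha$ is a negative element, where $\alpha_r$ is a $\tau_S$-twisted right complement rather than an obvious element of $A_X$ or $A_{\tau_S(X)}$; the reduction to twisted cycling of $\alpha^{-1}$ packages exactly that computation, which is why I would take this route rather than the direct one.
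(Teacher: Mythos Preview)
Your proof is correct and follows essentially the same approach as the paper: both establish the identity $d_S(\alpha)=\bigl(\widetilde c_S(\alpha^{-1})\bigr)^{-1}$ from the equality of the conjugating elements for decycling of~$\alpha$ and twisted cycling of~$\alpha^{-1}$, and then apply (the proof of) \autoref{L:cycling_parabolic} to~$\alpha^{-1}$. The only difference is that you spell out the case analysis of \autoref{L:cycling_parabolic} explicitly rather than citing it, and add some commentary on alternative routes.
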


\begin{proof}
It is obvious from the conjugating elements for decycling and twisted cycling that for every element~$\alpha$, one has $d_S(\alpha)= (\widetilde c_S(\alpha^{-1}))^{-1}$.
Now $\alpha\in A_X$, so $\alpha^{-1}\in A_X$ and then $\widetilde c_S(\alpha^{-1})\in A_Y$, where either $Y=X$ or $Y=\tau_S(X)$.
Therefore, $d_S(\alpha)= (\widetilde c_S(\alpha^{-1}))^{-1}\in A_Y$.
\end{proof}

Thanks to~\autoref{L:cycling_parabolic} and~\autoref{L:decycling_parabolic}, we see that if $\alpha\in A_X$, then we obtain $\alpha''\in A_X \cap SSS(\alpha)$, where $\alpha''$ is obtained from~$\alpha$ by iterated cycling and decycling (and possibly conjugating by~$\Delta_S$ at the end).
Notice that if~$\alpha$ is conjugate to a positive element, then~$\alpha''$ will be positive (as the infimum of~$\alpha''$ is maximal in its conjugacy class).

Let us suppose that $\alpha''\in A_X$ is positive.
We want to study the graph~$\mathcal G_{C^+(\alpha)}$.
We already know that some vertex $\alpha''\in C^+(\alpha)$ belongs to a proper standard parabolic subgroup.
Let us see that this is the case for all elements in~$C^+(\alpha)$.

\begin{proposition}~\label{P:arrows_in_positive_graph}
Let $v\in C^+(\alpha)$ with $\mbox{supp}(v)=X\subsetneq S$.
Then the label of every arrow in~$\mathcal G_{C^+(\alpha)}$ starting at~$v$ either belongs to~$A_X$, or is a letter~$t\in S$ that commutes with every letter in~$X$, or is equal to~$r_{X,t}$ for some letter~$t\in S$ adjacent to~$X$ in~$\Gamma_S$.
\end{proposition}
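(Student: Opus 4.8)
\emph{The plan.} Recall that the label $x$ of an arrow of $\mathcal{G}_{C^+(\alpha)}$ starting at $v$ is a positive element with $v^x\in A_S^+$; applying the convexity of $C^+(\alpha)$ (\autoref{L:convexity}) to the classical Garside structure, $x$ is simple, hence square-free; and, by minimality of the arrow, $v^y\notin A_S^+$ for every non-trivial proper prefix $y$ of $x$. If $x\in A_X$ the first alternative holds, so the plan is to assume $x\notin A_X$ and prove that then $x=r_{X,t}$ for some generator $t\in S\setminus X$. This suffices: if $t$ commutes with every generator of $X$ then it commutes with $\Delta_X$, so $\Delta_X\vee t=\Delta_Xt$ and \autoref{L:ribbon_lcm} gives $r_{X,t}=t$, which is the second alternative; otherwise $m(s,t)>2$ for some $s\in X$, so $t$ is adjacent to $X$ in $\Gamma_S$, which is the third alternative.

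\emph{Reducing to a divisibility statement.} First I would record that, for every $t\in S\setminus X$, the element $r_{X,t}$ is non-trivial (\autoref{L:ribbon_prefix}) and conjugates $v$ to a \emph{positive} element: by \autoref{L_ribbon_permutation}, $r_{X,t}^{-1}Xr_{X,t}=Y$ for some $Y\subseteq X\cup\{t\}$, so $r_{X,t}^{-1}A_Xr_{X,t}=A_Y$; and since $r_{X,t}=\Delta_X^{-1}\Delta_{X\cup\{t\}}$ and conjugation by $\Delta_X$ (resp.\ by $\Delta_{X\cup\{t\}}$) restricts to an automorphism of the monoid $A_X^+$ (resp.\ $A_{X\cup\{t\}}^+$), one checks $r_{X,t}^{-1}A_X^+r_{X,t}\subseteq A_{X\cup\{t\}}^+$, so $v^{r_{X,t}}\in A_Y^+$. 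Consequently, if I can show $r_{X,t}\preccurlyeq x$ for some $t\in S\setminus X$, then the minimality of the arrow $x$ immediately upgrades this to $r_{X,t}=x$, finishing the proof. So the entire problem reduces to: \emph{exhibit $t\in S\setminus X$ with $r_{X,t}\preccurlyeq x$.}

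\emph{Finding $t$ and growing $r_{X,t}$ inside $x$.} Since $x\notin A_X$, the first step is to produce a left-divisor generator $t$ of $x$ lying in $S\setminus X$. This is not automatic from square-freeness alone (e.g.\ $\sigma_1\sigma_2$ is square-free with only $\sigma_1$ as left-divisor generator), but it follows from the arrow conditions: for any generator $t_1\preccurlyeq x$ with $t_1\in X$ we must have $t_1\not\preccurlyeq v$, since otherwise $v=t_1v'$ would give $v^{t_1}=v't_1\in A_S^+$, contradicting minimality; propagating this observation along the left normal form of $v$ in $A_X$ --- whose factors \emph{are} simple, so that \autoref{L:simple_plus_letter} applies to each of them --- one rules out, assuming $\mbox{supp}(v)=X$, that every left-divisor generator of $x$ lies in $X$. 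Having fixed such a $t$ and written $x=tx'$, one then reads $x$ letter by letter and, using \autoref{L:simple_plus_letter} repeatedly (now applied to the simple element $x$) together with $\mbox{supp}(v)=X$, shows that in order for the conjugate $v^x$ to be positive the element $x$ must be left-divisible by successively larger square-free prefixes of $\Delta_{X\cup\{t\}}=\Delta_Xr_{X,t}$, and ultimately by $r_{X,t}$ itself.

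\emph{The main obstacle.} The difficulty, present throughout the previous step, is that \autoref{L:simple_plus_letter} is valid only for \emph{simple} elements, whereas $v$ is an arbitrary positive element (and the Remark after that lemma shows the statement genuinely fails for non-simple elements). I expect the way around it to be an induction on the canonical length $\ell_X(v)$: peeling off one simple factor of the left normal form of $v$ at a time keeps all applications of \autoref{L:simple_plus_letter} confined to simple elements, while the minimality of the arrow $x$ is exactly what forbids initial letters of $x$ lying in $X$ from being ``wasted'' --- which is why, for instance, $\sigma_1\sigma_2$ is never an arrow label out of an element of $\langle\sigma_1\rangle$.
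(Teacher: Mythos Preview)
Your reduction --- show $r_{X,t}\preccurlyeq x$ for some $t\in S\setminus X$, then invoke minimality --- is correct, and the remark that $r_{X,t}=t$ when $t$ commutes with $X$ neatly unifies the second and third alternatives. Two of your steps are incomplete, however, and the second one is where the real work lies.

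\emph{Finding $t$.} Your argument via \autoref{L:simple_plus_letter} propagated along the normal form of $v$ is both unnecessary and not clearly completable. The paper dispatches this in one line by convexity: if some initial letter $t$ of $x$ lies in $X$, then $t\preccurlyeq\Delta_X\wedge x$; since $v^{\Delta_X}=\tau_X(v)$ is positive, \autoref{L:convexity} says $v^{\Delta_X\wedge x}$ is positive too, and minimality forces $\Delta_X\wedge x=x$, i.e.\ $x\preccurlyeq\Delta_X$, so $x\in A_X$. Hence if $x\notin A_X$, every initial letter of $x$ already lies in $S\setminus X$.

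\emph{Growing $r_{X,t}$ inside $x$.} Your suggested induction on $\ell_X(v)$ has no evident inductive hypothesis: there is no shorter element for which $x$ is still a minimal arrow, so peeling off a normal-form factor of $v$ destroys exactly the minimality you need. The paper's mechanism is different. Write $v=v_1\cdots v_r$ as a word in the \emph{letters} of $X$ (not normal-form factors). For any $c_0$ with $t\preccurlyeq c_0\preccurlyeq x$ and $c_0\preccurlyeq r_{X,t}$, push $c_0$ through $v$ one letter at a time by setting $v_ic_i=c_{i-1}\vee v_i$. One proves $c_0\preccurlyeq c_1\preccurlyeq\cdots\preccurlyeq c_r\preccurlyeq x$ and, crucially, $c_i\preccurlyeq r_{X,t}$ for every $i$; this second bound is what keeps every $c_i$ simple, so that \autoref{L:simple_plus_letter} legitimately applies (to the simple prefix $c_{i-1}u_i'$ of $c_{i-1}\vee v_i$, with a single letter appended). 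Now iterate: feed $c_r$ back as the new $c_0$ and repeat. The outputs form an increasing chain of prefixes of $x$, hence stabilise; the stable value conjugates $v$ to a positive element (since $c_0\vee v=vc_r$), so by minimality it equals $x$. At stabilisation all $c_i$ coincide with $x$, giving $v_ix=xu_i$ with each $u_i$ a single letter. Thus $x$ conjugates the set $X$ to a set of generators, so $x^{-1}\Delta_Xx$ is positive, whence $t\preccurlyeq x\preccurlyeq\Delta_Xx$ and $\Delta_Xr_{X,t}=\Delta_X\vee t\preccurlyeq\Delta_Xx$, i.e.\ $r_{X,t}\preccurlyeq x$. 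The double bookkeeping $c_i\preccurlyeq x$ and $c_i\preccurlyeq r_{X,t}$, together with the outer iteration to a fixed point, is the missing idea.
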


\begin{proof}
Let~$x$ be the label of an arrow in~$\mathcal G_{C^+(\alpha)}$ starting at~$v$.
Let $t\in S$ be such that $t\preccurlyeq x$.
We distinguish three cases.
\smallskip

\underline{Case~1}:\quad
Suppose that $t\in X$.
As we know that~$\Delta_X$ conjugates~$v$ to a positive element~$\tau_X(v)$, the convexity property implies that $\Delta_X\wedge x$ also conjugates~$v$ to a positive element.
As~$x$ is minimal with this property, $\Delta_X\wedge x$ must be either trivial or equal to~$x$.
However, $\Delta_X\wedge x$ cannot be trivial as $t\preccurlyeq \Delta_X\wedge x$.
Therefore, $\Delta_X\wedge x=x$, which is equivalent to $x\preccurlyeq \Delta_X$. Hence $x\in A_X$.
\smallskip

\underline{Case~2}:\quad
Suppose that $t\notin X$ and~$t$ is not adjacent to~$X$.
Then~$t$ commutes with all letters of~$X$, which means that $v^t=v$ is a positive element.
Hence, by minimality of~$x$, we have $x=t$.
\smallskip

\underline{Case~3}:\quad
Suppose that $t\notin X$ and~$t$ is adjacent to~$X$.
We must show $x=r_{X,t}$.

We will determine~$x$ algorithmically, starting with $t \preccurlyeq x$, and iteratively adding letters until we obtain the whole conjugating element~$x$.

Write $v=v_1\cdots v_r$, where $v_i\in X$ for $i=1,\ldots,r$, and let $c_0$ be such that $t\preccurlyeq c_0\preccurlyeq x$ and $c_0\preccurlyeq r_{X,t}$.
Consider the following diagram, in which every two paths with the same initial and final vertices represent the same element:

\[
\xymatrix@C=12mm@R=12mm{
  \ar[r]^{v_1} \ar[d]_{c_0}
& \ar[r]^{v_2} \ar[d]_{c_1}
& \ar@{.>}[r]  \ar[d]_{c_2}
& \ar[r]^{v_r} \ar[d]^{c_{r-1}}
& \ar[d]^{c_r}
\\
  \ar[r]^{u_1}
& \ar[r]^{u_2}
& \ar@{.>}[r]
& \ar[r]^{u_r}
&
}
\]

Starting with~$c_0$ and~$v_1$, the elements~$c_1$ and~$u_1$ are defined by the condition $c_0\vee v_1 = c_0u_1 = v_1c_1$, corresponding to the first square.
Then~$c_2$ and~$u_2$ are determined by the condition $c_1\vee v_2= c_1u_2 = v_2c_2$ corresponding to the second square, and so on.

Consider the following claims that will be proven later:

Claim~1: $c_0 \vee (v_1\cdots v_i) = (v_1\cdots v_i)c_i$ for every $1\leq i\leq r$.

Claim~2: $ t \preccurlyeq c_0\preccurlyeq c_1 \preccurlyeq c_2 \preccurlyeq \cdots \preccurlyeq c_r \preccurlyeq x$ and $c_r\preccurlyeq r_{X,t}$.

These claims give us a procedure to compute~$x$:
We start with $c_0=t=x_0$, and compute $c_1, c_2,\ldots, c_r=:x_1$.
If~$x_1$ is longer than~$x_0$, we start the process again, this time taking $c_0=x_1$, and compute $c_1,\ldots,c_r=:x_2$.
We keep going while~$x_{i}$ is longer than~$x_{i-1}$.
As all obtained elements are prefixes of~$x$, it follows that the process will stop and we have $x_{k-1}=x_k$ for some~$k$.
On the other hand, Claim~1 implies that $x_k\preccurlyeq vx_k$, so $v^{x_k}$ is a positive element. Now notice that $x_k\preccurlyeq x$. Hence, by minimality of~$x$, it follows that $x_k=x$.

At this point of the iteration, something interesting will happen.
We can substitute $c_0=x$ in the above diagram and obtain:
\[
\xymatrix@C=12mm@R=12mm{
  \ar[r]^{v_1} \ar[d]_{x}
& \ar[r]^{v_2} \ar[d]_{x}
& \ar@{.>}[r]  \ar[d]_{x}
& \ar[r]^{v_r} \ar[d]^{x}
& \ar[d]^{x}
\\
  \ar[r]^{u_1}
& \ar[r]^{u_2}
& \ar@{.>}[r]
& \ar[r]^{u_r}
&
}
\]

This happens because each vertical arrow is a prefix of the following one, and the first and last arrows coincide, hence all vertical arrows must be the same.
Then $v_ix=xu_i$ for every $i=1,\ldots,r$.
As the relations defining~$A_S$ are homogeneous, it follows that each~$u_i$ is a single letter, and thus~$x$ conjugates the whole set~$X$ to a set $Y\subsetneq S$ (since $\mbox{supp}(v)=X$).
But then $x^{-1}\Delta_X x$ is a positive element, that is, $x\preccurlyeq \Delta_X x$.
Hence $t\preccurlyeq x \preccurlyeq \Delta_X x$. By \autoref{L:ribbon_lcm}, $\Delta_X r_{X,t} = \Delta_X\vee t \preccurlyeq \Delta_X x$, so we have that $r_{X,t}\preccurlyeq x$.
By \autoref{L_ribbon_permutation}, $v^{r_{X,t}}$ is a positive element.
Hence, minimality of~$x$ implies $x=r_{X,t}$, as we wanted to show.

The proofs of Claim~1 and Claim~2 remain to be done:

\emph{Proof of Claim~1:}
We will show by induction on~$i$ that $c_0 \vee (v_1\cdots v_i) = (v_1\cdots v_i)c_i$.
This is true for $i=1$ by definition of~$c_1$, so assume that $i>1$ and that the claim is true for~$i-1$.
Let then $c_0 \vee (v_1\cdots v_i)=v_1\cdots v_id$.
We have
$$
   (v_1\cdots v_{i-1})c_{i-1}= c_0 \vee (v_1\cdots v_{i-1})\preccurlyeq c_0 \vee (v_1\cdots v_i) = (v_1\cdots v_i)d,
$$
and thus
$$
  (v_1\cdots v_{i-1})v_ic_i =  (v_1\cdots v_{i-1})v_i \vee (v_1\cdots v_{i-1})c_{i-1} \preccurlyeq  (v_1\cdots v_{i-1})v_i d.
$$
This implies that $c_i\preccurlyeq d$.
But $(v_1\cdots v_i)c_i$ is a common multiple of~$c_0$ and $v_1\cdots v_i$ by construction, so $d=c_i$ and the claim is shown.

\emph{Proof of Claim~2:} By hypothesis, we have that $c_0\preccurlyeq r_{X,t}$.
We will first show by induction on~$i$ that $c_i\preccurlyeq r_{X,t}$ for every $i\geq 0$. Suppose that $c_{i-1}\preccurlyeq r_{X,t}$ for some~$i$.
Since $X^{r_{X,t}}=Y$ for some $Y\subsetneq S$ by \autoref{L_ribbon_permutation}, it follows that $v_i^{r_{X,t}}$ is positive, that is, $r_{X,t}\preccurlyeq v_i r_{X,t}$.
Hence $c_{i-1}\preccurlyeq v_i r_{X,t}$ and finally $v_ic_i = c_{i-1}\vee v_i \preccurlyeq v_i r_{X,t}$, so $c_i\preccurlyeq r_{X,t}$.

%(We don't use it) Moreover, as $t$ is the only possible initial letter of $r_{X,t}$ by \autoref{L:ribbon_prefix}, it also follows that $t$ is the only possible initial letter of every $c_i$ for every $i\geq 0$.

Secondly, we show that $c_{i-1}\preccurlyeq c_i$ for $i\geq 1$:
Since~$v_i$ is a single letter belonging to~$X$, and $t$ is the only possible initial letter of $r_{X,t}$ by \autoref{L:ribbon_prefix}, the assumption $t\notin X$ implies that $v_i\not\preccurlyeq c_{i-1}$, hence $u_i\neq 1$.
Write the word~$c_{i-1}u_i$ as~$c_{i-1}u_i' s$, where~$s$ is a single letter.
Since $c_{i-1}u_i=c_{i-1}\vee v_i$, it follows that $v_i\not\preccurlyeq c_{i-1}u_i'$ and $v_i\preccurlyeq c_{i-1}u_i's$.
By \autoref{L:simple_plus_letter} (notice that~$c_{i-1}$ is a simple element with respect to the usual Garside structure, as it is a prefix of~$r_{X,t}$, hence~$c_{i-1}u_i'$ is simple as it is a prefix of~$c_{i-1}\vee v_i$, which is also simple), we obtain that $c_{i-1}u_i's = v_i c_{i-1}u_i'$.
That is, $v_ic_i = c_{i-1}u_i = c_{i-1}u_i' s = v_i c_{i-1} u_i'$, which implies that $c_{i-1}\preccurlyeq c_i$.

Finally, we prove that $c_r\preccurlyeq x$: As $x^{-1}vx$ is positive, we have that $c_0\preccurlyeq x \preccurlyeq vx$, and also $vc_0\preccurlyeq vx$.
Hence, $c_0\vee vc_0 \preccurlyeq vx$.
But notice that by Claim~1, $c_0\vee v= v c_r$, so $vc_r = c_0\vee v \preccurlyeq c_0\vee vc_0 \preccurlyeq vx$.
Therefore $c_r\preccurlyeq x$.
\end{proof}

\begin{example}\label{E:positive_conjugates_arrows}
In \autoref{F:positive_conjugates_arrows} we can see the graph $\mathcal G_{C^+(\alpha)}$ for $\alpha=\sigma_1\sigma_2$ in the braid group on 5~strands (the Artin--Tits group of type~$A_4$).
We see the 6~vertices corresponding to the positive conjugates of $\sigma_1\sigma_2$, and the three kinds of arrows explained in \autoref{P:arrows_in_positive_graph}.
For instance, the arrows starting from $\sigma_1\sigma_2$ are labeled~$\sigma_1$ (type 1), $\sigma_4$ (type 2) and $\sigma_3\sigma_2\sigma_1$ (type 3).
\end{example}

\begin{figure}[h]
\begin{center}
\tikzset{main node/.style={ellipse,draw,minimum size=0.53cm,inner sep=0pt},}
 \begin{tikzpicture}
    \tikzstyle{flecha}=[->, thick,>=latex]
    \node[main node] (1) {$\sigma_1\sigma_2$};
    \node[main node] (2) [right = 3cm  of 1]  {$\sigma_2\sigma_3$};
    \node[main node] (3) [right = 3cm  of 2] {$\sigma_3\sigma_4$};
    \node[main node] (4) [below = 1.5cm  of 1] {$\sigma_2\sigma_1$};
    \node[main node] (5) [right = 3cm  of 4] {$\sigma_3\sigma_2$};
    \node[main node] (6) [right = 3cm  of 5] {$\sigma_4\sigma_3$};

    \draw[flecha] (1)to[bend left] node[below]{$\sigma_3\sigma_2\sigma_1$}(2);
    \draw[flecha] (2)to[bend left] node[above]{$\sigma_1\sigma_2\sigma_3$}(1);
    \draw[flecha] (2)to[bend left] node[below]{$\sigma_4\sigma_3\sigma_2$}(3);
    \draw[flecha] (3)to[bend left] node[above]{$\sigma_2\sigma_3\sigma_4$}(2);
    \draw[flecha] (4)to[bend left] node[below]{$\sigma_3\sigma_2\sigma_1$}(5);
    \draw[flecha] (5)to[bend left] node[above]{$\sigma_1\sigma_2\sigma_3$}(4);
    \draw[flecha] (5)to[bend left] node[below]{$\sigma_4\sigma_3\sigma_2$}(6);
    \draw[flecha] (6)to[bend left] node[above]{$\sigma_2\sigma_3\sigma_4$}(5);
    \draw[flecha] (1)to[bend right] node[left]{$\sigma_1$}(4);
    \draw[flecha] (4)to[bend right] node[right]{$\sigma_2$}(1);
    \draw[flecha] (2)to[bend right] node[left]{$\sigma_2$}(5);
    \draw[flecha] (5)to[bend right] node[right]{$\sigma_3$}(2);
    \draw[flecha] (3)to[bend right] node[left]{$\sigma_3$}(6);
    \draw[flecha] (6)to[bend right] node[right]{$\sigma_4$}(3);
    \draw[flecha] (1)to[loop left] node[above left]{$\sigma_4$}(1);
    \draw[flecha] (4)to[loop left] node[below left]{$\sigma_4$}(4);
    \draw[flecha] (3)to[loop right] node[above right]{$\sigma_1$}(3);
    \draw[flecha] (6)to[loop right] node[below right]{$\sigma_1$}(6);

\end{tikzpicture}
\end{center}
\caption{The graph $\mathcal G_{C^+(\alpha)}$ for $\alpha=\sigma_1\sigma_2$ in the braid group on 5 strands.}\label{F:positive_conjugates_arrows}
\end{figure}
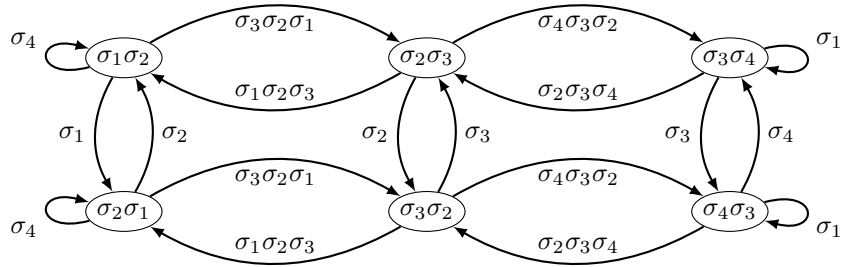

\begin{corollary}\label{C:conjugators_of_positive_elements}
Let $\alpha\in A_S$ be a non-trivial element that belongs to a proper parabolic subgroup, and is conjugate to a positive element.
Then all positive conjugates of~$\alpha$ belong to a proper standard parabolic subgroup.
Moreover, if~$v$ and~$w$ are positive conjugates of~$\alpha$ with $\mbox{supp}(v)=X$ and $\mbox{supp}(w)=Y$, then for every $x\in A_S$ such that $x^{-1}vx=w$, one has $x^{-1}z_X x = z_Y$ (and hence $x^{-1}A_Xx=A_Y$).
\end{corollary}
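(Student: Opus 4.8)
The plan is to argue by induction on $|S|$. For $|S|\le 1$ the statement is vacuous, since the only proper parabolic subgroup is trivial; so assume $|S|\ge 2$ and that the corollary holds for every Artin--Tits group of spherical type generated by fewer than $|S|$ elements. Writing $\alpha=g^{-1}\beta g$ with $\beta\in A_{X_1}$ for some $X_1\subsetneq S$, iterated cycling and decycling in $A_S$ keep, by \autoref{L:cycling_parabolic} and \autoref{L:decycling_parabolic}, an element of $A_Z$ inside $A_Z$ or $A_{\tau_S(Z)}$, so they produce an element $v_0\in SSS(\alpha)$ lying in a proper standard parabolic subgroup; since $\alpha$ is conjugate to a positive element, $\inf_S(v_0)\ge 0$, hence $v_0$ is a positive conjugate of $\alpha$ with $\emptyset\ne\mbox{supp}(v_0)\subsetneq S$. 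I will then propagate information through the graph $\mathcal G_{C^+(\alpha)}$, which is finite and connected.

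The heart of the argument is the following claim about a single arrow $v\to v'$ of $\mathcal G_{C^+(\alpha)}$, with label $x$, under the assumption $\mbox{supp}(v)=X\subsetneq S$: one has $\mbox{supp}(v')\subsetneq S$ and $x^{-1}z_Xx=z_{\mbox{supp}(v')}$. By \autoref{P:arrows_in_positive_graph} there are three cases. If $x$ is a letter $t$ commuting with every element of $X$, then $v'=v$ and $t$ commutes with $z_X$, so $\mbox{supp}(v')=X$ and $x^{-1}z_Xx=z_X$. If $x=r_{X,t}$ with $t$ adjacent to $X$, then by \autoref{L_ribbon_permutation} conjugation by $r_{X,t}$ is a generator-to-generator isomorphism of $A_X$ onto $A_Y$ for some $Y\subseteq X\cup\{t\}$ with $|Y|=|X|$; it carries $v$ to $v'$ and the support $X$ of $v$ to the support of $v'$, so $\mbox{supp}(v')=Y$, which is proper since $|Y|<|S|$, and $x^{-1}A_Xx=A_Y$ gives $x^{-1}z_Xx=z_Y$ by \autoref{L:parabolic=central_element}. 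The delicate case is $x\in A_X$: then $v'=x^{-1}vx\in A_X$, and $z_X$ is fixed under conjugation by $x$ because it is central in $A_X$; it remains to see $\mbox{supp}(v')=X$. If not, $v'$ lies in a proper standard parabolic subgroup of $A_X$, so $v=xv'x^{-1}$ lies in the proper parabolic subgroup $x A_{\mbox{supp}(v')}x^{-1}$ of $A_X$. Applying the inductive hypothesis inside $A_X$ (legitimate since $|X|<|S|$) to the nontrivial element $v\in A_X$, which is conjugate to the positive element $v$, we deduce that every positive conjugate of $v$ in $A_X$ — in particular $v$ itself — lies in a proper standard parabolic subgroup of $A_X$, contradicting $\mbox{supp}(v)=X$. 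Hence $\mbox{supp}(v')=X$ and $x^{-1}z_Xx=z_X$.

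Granting the single-arrow claim, I would finish as follows. Since $\mathcal G_{C^+(\alpha)}$ is connected and $\mbox{supp}(v_0)\subsetneq S$, an induction along paths from $v_0$ shows $\mbox{supp}(v)\subsetneq S$ for every positive conjugate $v$ of $\alpha$, which is the first assertion. For the second, take positive conjugates $v,w$ with $\mbox{supp}(v)=X$, $\mbox{supp}(w)=Y$, and first suppose $x$ is a positive conjugator with $x^{-1}vx=w$. By convexity of $C^+(\alpha)$ (\autoref{L:convexity}) and the standard minimal-prefix argument, $x$ factors as a product $x_1\cdots x_k$ of arrow labels along a path $v\to\cdots\to w$ in $\mathcal G_{C^+(\alpha)}$, and the single-arrow identities telescope to $x^{-1}z_Xx=z_Y$. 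For an arbitrary $x$ with $x^{-1}vx=w$, choose $N$ with $\Delta_S^N$ central and $x\Delta_S^N$ positive; then $x\Delta_S^N$ is a positive conjugator from $v$ to $w$, so $(x\Delta_S^N)^{-1}z_X(x\Delta_S^N)=z_Y$, and centrality of $\Delta_S^N$ gives $x^{-1}z_Xx=z_Y$. Finally $x^{-1}A_Xx=A_Y$ follows from \autoref{L:parabolic=central_element}.

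I expect the case $x\in A_X$ of the single-arrow claim to be the main obstacle: one must rule out that conjugating a positive element of full support $X$ by an element of $A_X$ shrinks its support, and the only way I see to do this is to feed the corollary itself back in for the smaller group $A_X$ — which is precisely why the whole statement is organized as an induction on $|S|$. Everything else is a finite case check over the three arrow types together with routine propagation around the finite connected conjugacy graph.
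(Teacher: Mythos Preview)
Your proof is correct and follows essentially the same route as the paper's: both use \autoref{P:arrows_in_positive_graph} to classify the arrows of $\mathcal G_{C^+(\alpha)}$, propagate along the connected graph, and handle the delicate case $x\in A_X$ by invoking the first assertion for the smaller group $A_X$ to rule out support shrinkage. The only difference is organizational: you set this up as an explicit induction on $|S|$, whereas the paper phrases the same step as ``taking $A_X$ as the global Artin--Tits group'' and appealing to the already-proved first part there.
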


\begin{proof}
With the given hypothesis, we know that we can conjugate~$\alpha$ to a positive element $v\in A_X$, where $X\subsetneq S$:
First, compute a conjugate of~$\alpha$ in a proper standard parabolic subgroup, then apply iterated twisted cycling and decycling until a super summit conjugate is obtained; the latter will be positive and, by \autoref{L:cycling_parabolic} and \autoref{L:decycling_parabolic}, contained in a proper standard parabolic subgroup.

By \autoref{P:arrows_in_positive_graph}, there are three types of arrows in~$\mathcal G_{C^+(\alpha)}$ starting at~$v$.
In each of these three cases, consider the target vertex~$v^x$ of an arrow with label~$x$ starting at~$v$:

\begin{enumerate}

\item $x\in A_X$.
In this case $v^x\in A_X$.

\item $x=t\notin X$ where~$t$ is not adjacent to~$X$.
In this case $v^x=v\in A_X$.

\item $x=r_{X,t}$, where $t\notin X$ is adjacent to~$X$.
Then $x^{-1}Xx=Z$ for some proper subset $Z\subsetneq S$.
Hence $v^x\in A_Z$.
\end{enumerate}

Therefore, in every case,~$v^x$ belongs to a proper standard parabolic subgroup.
We can apply the same argument to every vertex of~$\mathcal G_{C^+(\alpha)}$ and, since~$\mathcal G_{C^+(\alpha)}$ is connected, it follows that all vertices in $\mathcal G_{C^+(\alpha)}$ belong to a proper standard parabolic subgroup.

Now denote $X=\mbox{supp}(v)$, let~$x$ be the label of an arrow in~$\mathcal G_{C^+(\alpha)}$ starting at~$v$, and let $Y=\mbox{supp}(v^x)$.

If $x\in A_X$, we have $Y=\mbox{supp}(v^x)\subseteq X$.
If we had $Y\subsetneq X$, then~$v^x$ would be a positive element belonging to a proper standard parabolic subgroup of~$A_X$.
Hence, taking~$A_X$ as the global Artin--Tits group, all positive conjugates of~$v^x$ in~$A_X$ would belong to a proper standard parabolic subgroup of~$A_X$, which is not the case, as~$v$ itself does not satisfy that property.
Hence $Y=X$, so $\mbox{supp}(v^x)=X$.
Moreover, as $x\in A_X$, we have $x^{-1}z_Xx=z_X$.

If $x=t\notin X$ where~$t$ is not adjacent to~$X$, then~$x$ commutes with all letters of~$X$, so we have $v^x=v$, whence $Y=X$, and also $x^{-1}z_Xx=z_X$.

Finally, if $x=r_{X,t}$ where $t\notin X$ is adjacent to~$X$, then $x^{-1}Xx=Z$ for some $Z\subsetneq S$ by \autoref{L_ribbon_permutation}.
As~$v$ contains all letters of~$X$, it follows that~$v^x$ contains all letters of~$Z$, that is, $Z=Y$.
Therefore $x^{-1}Xx=Y$, which implies that $x^{-1}A_X x = A_Y$ and hence $x^{-1} z_X x = z_Y$.

Applying this argument to all arrows in~$\mathcal G_{C^+(\alpha)}$, it follows that the label of any arrow starting at a vertex~$u_0$ and ending at a vertex~$u_1$ conjugates~$z_{\mbox{\scriptsize supp}(u_0)}$ to~$z_{\mbox{\scriptsize supp}(u_1)}$.
This can be extended to paths in~$\mathcal G_{C^+(\alpha)}$:
If a path goes from~$u_0$ to~$u_k$, the element associated to the path conjugates~$z_{\mbox{\scriptsize supp}(u_0)}$ to~$z_{\mbox{\scriptsize supp}(u_k)}$.

Now suppose that~$v$ and~$w$ are two positive conjugates of~$\alpha$, where $\mbox{supp}(v)=X$ and $\mbox{supp}(w)=Y$, and suppose that $x\in A_S$ is such that $x^{-1}vx=w$.
Then~$v$ and~$w$ are vertices of~$\mathcal G_{C^+(\alpha)}$.
Up to multiplying~$x$ by a central power of~$\Delta_S$, we can assume that~$x$ is positive.
Decomposing~$x$ as a product of minimal conjugators, it follows that~$x$ is the element associated to a path in~$\mathcal G_{C^+(\alpha)}$ starting at~$v$ and finishing at~$w$.
Therefore, $x^{-1}z_X x=z_Y$, as we wanted to show.
\end{proof}

\begin{example}\label{E:positive_conjugates_action}
Consider again the situation from \autoref{E:positive_conjugates_arrows} and \autoref{F:positive_conjugates_arrows}.
We see that all positive conjugates of $\sigma_1\sigma_2$ belong to a proper standard parabolic subgroup, namely either to
$\langle\sigma_1,\sigma_2\rangle$, or to $\langle\sigma_2,\sigma_3\rangle$, or to $\langle\sigma_3,\sigma_4\rangle$.
We can also check that the labels of the arrows in the graph $\mathcal G_{C^+(\alpha)}$ conjugate the central elements of the (minimal) standard parabolic subgroups containing the respective conjugates as expected; cf.\ \autoref{F:positive_conjugates_action}.
\end{example}

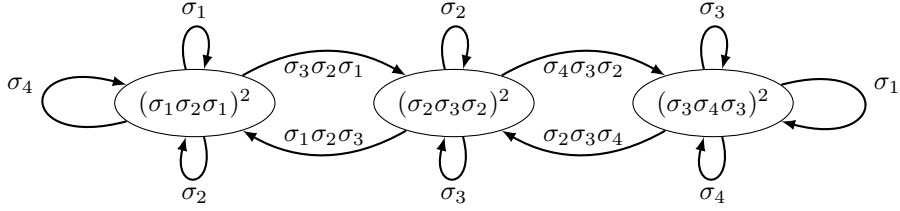
\begin{figure}[h]
\begin{center}
\tikzset{main node/.style={ellipse,draw,minimum size=0.53cm,inner sep=0pt},}
 \begin{tikzpicture}
    \tikzstyle{flecha}=[->, thick,>=latex]
    \node[main node] (1) {$(\sigma_1\sigma_2\sigma_1)^2$\rule[-6pt]{0pt}{18pt}};
    \node[main node] (2) [right = 1.3cm  of 1]  {$(\sigma_2\sigma_3\sigma_2)^2$\rule[-6pt]{0pt}{18pt}};
    \node[main node] (3) [right = 1.3cm  of 2] {$(\sigma_3\sigma_4\sigma_3)^2$\rule[-6pt]{0pt}{18pt}};

    \draw[flecha] (1)to[loop above] node[above]{$\sigma_1$}(1);
    \draw[flecha] (1)to[loop left] node[above left]{$\sigma_4$}(1);
    \draw[flecha] (1)to[bend left] node[below]{$\sigma_3\sigma_2\sigma_1$}(2);
    \draw[flecha] (1)to[loop below] node[below]{$\sigma_2$}(1);

    \draw[flecha] (2)to[bend left] node[above]{$\sigma_1\sigma_2\sigma_3$}(1);
    \draw[flecha] (2)to[loop above] node[above]{$\sigma_2$}(2);
    \draw[flecha] (2)to[loop below] node[below]{$\sigma_3$}(2);
    \draw[flecha] (2)to[bend left] node[below]{$\sigma_4\sigma_3\sigma_2$}(3);

    \draw[flecha] (3)to[loop above] node[above]{$\sigma_3$}(3);
    \draw[flecha] (3)to[bend left] node[above]{$\sigma_2\sigma_3\sigma_4$}(2);
    \draw[flecha] (3)to[loop right] node[above right]{$\sigma_1$}(3);
    \draw[flecha] (3)to[loop below] node[below]{$\sigma_4$}(3);

\end{tikzpicture}
\end{center}
\caption{The action of the conjugating elements from \autoref{F:positive_conjugates_arrows} on the central elements $z_{\,.}$ of the (minimal) standard parabolic subgroups containing the positive conjugates of $\sigma_1\sigma_2$.}\label{F:positive_conjugates_action}
\end{figure}

The result from \autoref{C:conjugators_of_positive_elements} allows to introduce an important concept:

\begin{definition}
Let~$\alpha\in A_S$ be conjugate to a positive element~$\alpha' = \beta^{-1}\alpha \beta \in A_S^+$. Let~$X=\mbox{supp}(\alpha')$. We define the {\em parabolic closure} of $\alpha$ as the subgroup~$P_\alpha = \beta A_X \beta^{-1}$.
\end{definition}

\begin{proposition}\label{P:P_beta_well defined_positive case}
Under the above assumptions, the parabolic closure~$P_\alpha$ is well defined, and it is the smallest parabolic subgroup (by inclusion) containing~$\alpha$.
\end{proposition}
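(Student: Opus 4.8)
The plan is to prove, in turn, the three facts packaged in the statement: that $P_\alpha$ is independent of the chosen positive conjugate, that $\alpha\in P_\alpha$, and that $P_\alpha$ is contained in every parabolic subgroup containing $\alpha$.

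First, $\alpha\in P_\alpha$ and the fact that $P_\alpha$ is parabolic are immediate: if $\alpha'=\beta^{-1}\alpha\beta\in A_S^+$ and $X=\mathrm{supp}(\alpha')$, then $\alpha'\in A_X^+\subseteq A_X$, so $\alpha=\beta\alpha'\beta^{-1}\in\beta A_X\beta^{-1}=P_\alpha$, and $P_\alpha$ is a conjugate of the standard parabolic subgroup $A_X$. For well-definedness, let $\alpha'=\beta^{-1}\alpha\beta$ and $\alpha''=\gamma^{-1}\alpha\gamma$ be two positive conjugates, with supports $X$ and $Y$; then $x=\beta^{-1}\gamma$ satisfies $x^{-1}\alpha'x=\alpha''$. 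If $\alpha=1$ then $X=Y=\emptyset$ and both candidate subgroups are trivial. If $\alpha\neq1$ and $X\subsetneq S$, then $\alpha'\in A_X$ exhibits $\alpha$ inside a proper parabolic subgroup, so \autoref{C:conjugators_of_positive_elements} applies to $\alpha$ and yields $x^{-1}A_Xx=A_Y$, which rearranges to $\beta A_X\beta^{-1}=\gamma A_Y\gamma^{-1}$; by the symmetric argument, $X=S$ forces $Y=S$, in which case both candidate subgroups equal $A_S$. Hence $P_\alpha$ is well defined.

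The real work is minimality. I would first record that the parabolic closure is equivariant: for any $g\in A_S$, the element $g^{-1}\beta$ conjugates $g^{-1}\alpha g$ to the same positive element $\alpha'$, so $P_{g^{-1}\alpha g}=g^{-1}P_\alpha g$. Consequently, given a parabolic subgroup $P\ni\alpha$ written as $P=\gamma A_Y\gamma^{-1}$, it suffices to prove $P_g\subseteq A_Y$ for $g:=\gamma^{-1}\alpha\gamma\in A_Y$. This is trivial if $Y=S$, so assume $Y\subsetneq S$. Since $g$ is conjugate (in $A_S$) to a positive element, iterated cycling in the ambient Garside structure of $A_S$ produces a conjugate $g''$ of $g$ whose infimum is maximal in the conjugacy class; that infimum is at least $\inf_S(\alpha')\geq0$, so $g''\in A_S^+$. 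By \autoref{L:cycling_parabolic} applied inductively (using $Y_i\subsetneq S$ at each step, which holds since each $Y_i$ has the same cardinality as $Y$), every intermediate element lies in a standard parabolic subgroup $A_{Y_i}$ with $Y_0=Y$ and $Y_{i+1}\in\{Y_i,\tau_S(Y_i)\}$; and, reading off the conjugating element in the proof of that lemma, the conjugator $c_i$ from $g_i$ to $g_{i+1}$ carries $A_{Y_i}$ onto $A_{Y_{i+1}}$, i.e. $c_i^{-1}A_{Y_i}c_i=A_{Y_{i+1}}$ (the conjugator is a factor of the normal form of the positive or negative part, which lies in $A_{Y_i}$, or such a factor multiplied by $\Delta_S$). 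Writing $c$ for the product of the $c_i$, we get $g''=c^{-1}gc\in A_{Y_N}^+$, hence $\mathrm{supp}(g'')\subseteq Y_N$, together with $c^{-1}A_Yc=A_{Y_N}$. Using well-definedness to compute $P_g$ from the positive conjugate $g''$, we conclude $P_g=c\,A_{\mathrm{supp}(g'')}\,c^{-1}\subseteq c\,A_{Y_N}\,c^{-1}=A_Y$, as desired; applying the equivariance reduction in reverse gives $P_\alpha\subseteq P$.

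The step I expect to be the main obstacle is the refinement of \autoref{L:cycling_parabolic} (and, if one prefers to symmetrize using \autoref{L:decycling_parabolic}, of that lemma as well) just used: one needs not merely that cycling keeps an element of a proper standard parabolic subgroup $A_{Y_i}$ inside some standard parabolic subgroup, but that the conjugating element itself takes $A_{Y_i}$ onto that subgroup. This is visible in the proofs of those lemmas, but it must be made explicit. Everything else — the equivariance reduction, the fact that a conjugate of maximal infimum of an element which is conjugate to a positive element is itself positive, and the elementary support bookkeeping — is routine, as are the degenerate cases $\alpha=1$ and $Y=S$.
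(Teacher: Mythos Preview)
Your argument is correct and follows the same overall strategy as the paper: well-definedness via \autoref{C:conjugators_of_positive_elements}, then minimality by standardizing the ambient parabolic and producing a positive conjugate inside it via cycling-type operations. The difference lies in which operation is used. The paper applies \emph{twisted} cycling and decycling: for an element of $A_Z$ with nontrivial negative part the twisted-cycling conjugator is $x_s^{-1}\in A_Z$, and the decycling conjugator is likewise the inverse of a factor in $A_Z$, so the trajectory never leaves $A_Z$ and the total conjugator $y$ lies in $A_Z$; one then gets $P_\alpha = xy\,A_{\mbox{\scriptsize supp}(\alpha')}\,(xy)^{-1} \subseteq xy\,A_Z\,(xy)^{-1} = x A_Z x^{-1}=P$ with no further bookkeeping. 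You instead use ordinary cycling, which can move the element from $A_{Y_i}$ to $A_{\tau_S(Y_i)}$, and compensate by tracking that each conjugator carries $A_{Y_i}$ onto $A_{Y_{i+1}}$. That refinement of \autoref{L:cycling_parabolic} is indeed visible in its proof (the conjugator is either $y_1\in A_{Y_i}$ or $x_s^{-1}\Delta_S$, which conjugates $A_{Y_i}$ to $A_{\tau_S(Y_i)}$), so your route is valid; the paper's choice of twisted cycling simply sidesteps the need for this extraction, since then the conjugators never leave $A_Z$ at all.
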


\begin{proof}
Suppose that $\alpha''=\gamma^{-1}\alpha \gamma$ is another positive conjugate of~$\alpha$, and denote $Y=\mbox{supp}(\alpha'')$. We must show that $\beta A_X \beta^{-1} = \gamma A_Y \gamma^{-1}$.
We have $(\alpha')^{\beta^{-1}\gamma}=\alpha''$.
If $\alpha$ belongs to a proper parabolic subgroup, we can apply~\autoref{C:conjugators_of_positive_elements}, so both $A_X$ and $A_Y$ are proper standard parabolic subgroups, the conjugating element $\beta^{-1}\gamma$ maps $z_X$ to~$z_Y$, and then it maps $A_X$ to $A_Y$ by~\autoref{L:parabolic=central_element}.
Therefore $\beta A_X \beta^{-1} = \gamma A_Y \gamma^{-1}$, that is, $P_\alpha$ is well defined.
If $\alpha$ does not belong to a proper parabolic subgroup, none of its conjugates does, hence $A_X=A_Y=A_S$. We then have $\beta A_X \beta^{-1} = \beta A_S \beta^{-1} = A_S = \gamma A_S \gamma^{-1} = \gamma A_Y \gamma^{-1}$, so $P_\alpha$ is also well defined (and equal to~$A_S$) in this case.

Now let $P$ be a parabolic subgroup containing~$\alpha$. Let $x\in A_S$ be such that $x^{-1} P x$ is standard, that is, $x^{-1}Px=A_Z$ for some $Z\subseteq S$. Then $x^{-1}\alpha x\in A_Z$ and we can obtain another conjugate $\alpha'=y^{-1}x^{-1}\alpha xy \in A_Z \cap SSS(\alpha)$, where $y\in A_Z$, by iterated twisted cycling and iterated decycling. Since $\alpha$ is conjugate to a positive element, all elements in $SSS(\alpha)$ are positive, so $\alpha'$ is positive. Hence, if we denote $X=\mbox{supp}(\alpha')$, we have $A_X\subseteq A_Z$. Conjugating back, we have $P_\alpha = xy A_X y^{-1}x^{-1} \subseteq xy A_Z y^{-1}x^{-1} = x A_Z x^{-1} = P$. Hence $P_\alpha$ is contained in any parabolic subgroup containing~$\alpha$, as we wanted to show.
\end{proof}

\section{Parabolic closure of an arbitrary element}

In the previous section we defined the parabolic closure of a given element $\alpha\in A_S$, provided that $\alpha$ is conjugate to a positive element. In this section we will see how to extend this definition to every element $\alpha\in A_S$. That is, we will see that for every element $\alpha\in A_S$ there is a parabolic subgroup $P_\alpha$ which is the smallest parabolic subgroup (by inclusion) containing~$\alpha$.

Instead of using the positive conjugates of $\alpha$ (which may not exist), we will define~$P_\alpha$ by using one of the summit sets we defined earlier: $RSSS_\infty(\alpha)$.  We will see that the support of the elements in $RSSS_\infty(\alpha)$ is also {\it preserved by conjugation}, so it can be used to define the smallest parabolic subgroup containing~$\alpha$.

\begin{definition}
Let $\alpha\in A_S$.
Let $\alpha'\in RSSS_\infty(\alpha)$, where $\alpha'=\beta^{-1}\alpha \beta$.
If we denote $Z=\mbox{supp}(\alpha')$, we define the parabolic closure of $\alpha$ as $P_\alpha = \beta A_Z \beta^{-1}$.
\end{definition}

The next result shows~\autoref{T:minimal_parabolic_containing_an_element}.

\begin{proposition}\label{P:minimalContainingParabolic}
Under the above assumptions, the parabolic closure~$P_\alpha$ is well defined, and it is the smallest parabolic subgroup (by inclusion) containing~$\alpha$.
\end{proposition}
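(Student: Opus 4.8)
The plan is to imitate the proof of \autoref{P:P_beta_well defined_positive case}, replacing the cone of positive conjugates by $RSSS_\infty(\alpha)$ throughout, and to argue by induction on the rank $|S|$. The containment $\alpha\in P_\alpha$ costs nothing: writing the $np$-normal form of $\alpha'=\beta^{-1}\alpha\beta\in RSSS_\infty(\alpha)$ as $x^{-1}y$, both $x$ and $y$ are positive with support contained in $Z=\mathrm{supp}(\alpha')$, so $\alpha'=x^{-1}y\in A_Z$, whence $\alpha\in\beta A_Z\beta^{-1}=P_\alpha$; and $P_\alpha$ is parabolic by construction. What remains is that $P_\alpha$ is independent of $\alpha'$ (well-definedness) and that $P_\alpha$ lies in every parabolic containing $\alpha$ (minimality). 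One case is immediate: if $\alpha$ is conjugate to a positive element then every element of $RSSS_\infty(\alpha)\subseteq SSS(\alpha)$ is positive, so the present definition of $P_\alpha$ is precisely the one of \autoref{P:P_beta_well defined_positive case} and nothing is left to prove. So assume that no conjugate of $\alpha$ is positive; then every conjugate of $\alpha$ has negative infimum with respect to every $\Delta_S^n$, and in particular the negative part of the $np$-normal form of any element of $RSSS_\infty(\alpha)$ — with respect to any $\Delta_S^n$ — is non-trivial.

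The technical core I would establish first is a transfer lemma: \emph{for $v\in A_X$ ($X\subseteq S$) that is $A_S$-conjugate to $\alpha$, one has $v\in RSSS_\infty(\alpha)$ if and only if $v\in RSSS_\infty^{(X)}(v)$}, where $RSSS_\infty^{(X)}$ is computed inside $A_X$ with the Garside elements $\Delta_X^m$. This rests on \autoref{S:Garside}: the $np$-normal form of an element of $A_X$ is the same in $A_X$ and in $A_S$, so $\inf$ and $\sup$ agree on $A_X$ for corresponding powers of $\Delta_X$ and $\Delta_S$; and — using \autoref{L:twisted_cycling} and the non-triviality of the negative part secured above — the conjugating elements for twisted cycling and for decycling (with respect to any $\Delta_S^n$) of an element of $A_X$ lie in $A_X$ and coincide with the corresponding conjugating elements computed in $A_X$, exactly as in the proofs of \autoref{L:cycling_parabolic} and \autoref{L:decycling_parabolic}. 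Hence iterated twisted cycling and decycling inside $A_S$, started in $A_X$, stay in $A_X$; the minimal canonical length over the $A_S$-conjugacy class of $v$ equals that over its $A_X$-conjugacy class; and closed cycling and decycling orbits coincide — giving the equivalence for every Garside structure, hence for $RSSS_\infty$.

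For well-definedness I would prove, by induction on $|S|$, that the support is preserved inside $RSSS_\infty(\alpha)$: if $v,w\in RSSS_\infty(\alpha)$, $\mathrm{supp}(v)=X$, $\mathrm{supp}(w)=Y$, and $x^{-1}vx=w$, then $x^{-1}z_Xx=z_Y$ and hence $x^{-1}A_Xx=A_Y$ by \autoref{L:parabolic=central_element}; well-definedness then follows verbatim as in \autoref{P:P_beta_well defined_positive case}. Now $RSSS_\infty(\alpha)$ is convex in the sense of \autoref{L:convexity} (an intersection of the convex sets $RSSS_n(\alpha)$; the operation $\wedge$ does not depend on the Garside element), finite, and $\tau_S$-invariant, so by the convexity argument of \autoref{S:CyclingsAndFriends} the graph $\mathcal G_{RSSS_\infty(\alpha)}$ of minimal positive conjugators is finite, connected, with simple labels, and it suffices to analyse the label $\ell$ of one arrow $v\to v^\ell$ with $X=\mathrm{supp}(v)\subsetneq S$. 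The transfer lemma restores the mechanism of \autoref{P:arrows_in_positive_graph}: conjugating $v$ by $\Delta_X$ keeps it in $RSSS_\infty(\alpha)$ (this conjugation is the Garside automorphism of $A_X$ and preserves $RSSS_\infty^{(X)}$), and conjugating $v$ by the ribbon $r_{X,t}$ keeps it in $RSSS_\infty(\alpha)$ (this conjugation is a Garside isomorphism $A_X\to A_Z$ with $Z\subsetneq S$, so it carries $RSSS_\infty^{(X)}(v)$ onto $RSSS_\infty^{(Z)}(v^{r_{X,t}})$). Running the convexity-plus-minimality argument of \autoref{P:arrows_in_positive_graph} then forces $\ell\in A_X$, or $\ell=t$ commuting with every letter of $X$, or $\ell=r_{X,t}$ for some $t$ adjacent to $X$. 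In the first case the inductive hypothesis applied inside $A_X$ (where $v\in RSSS_\infty^{(X)}(v)$ has full support, hence parabolic closure $A_X$) makes $\ell$ normalise $A_X$, so $\mathrm{supp}(v^\ell)=X$ and $\ell$ fixes $z_X$; in the second, $v^\ell=v$ and $\ell$ commutes with the positive word $z_X$; in the third, $\ell$ conjugates $A_X$ onto $A_Z$, so $\mathrm{supp}(v^\ell)=Z$ and $\ell$ carries $z_X$ to $z_Z$. Composing along a path in the connected graph, as in \autoref{C:conjugators_of_positive_elements}, gives the claim.

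For minimality, take a parabolic $P\ni\alpha$ and write $P=uA_Zu^{-1}$, so $u^{-1}\alpha u\in A_Z$; running the procedure of \autoref{P:I_infty_is_nonempty} that reaches $RSSS_\infty$ (iterated twisted cycling and decycling with respect to the $\Delta_S^N$), started from $u^{-1}\alpha u$, every conjugating element stays in $A_Z$ by the transfer lemma's ingredients, so we obtain $\alpha'\in RSSS_\infty(\alpha)\cap A_Z$ with $\alpha'=h^{-1}u^{-1}\alpha uh$, $h\in A_Z$; then $\mathrm{supp}(\alpha')\subseteq Z$ and, by well-definedness, $P_\alpha=uhA_{\mathrm{supp}(\alpha')}h^{-1}u^{-1}\subseteq uhA_Zh^{-1}u^{-1}=uA_Zu^{-1}=P$. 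The step I expect to resist most is the arrow analysis inside the induction, above all the ribbon case $\ell=r_{X,t}$: in \autoref{P:arrows_in_positive_graph} this rested on a delicate diagram argument that used \emph{positivity} of the conjugate, which is no longer available, so it must be redone using the transfer lemma, convexity of $RSSS_\infty(\alpha)$, and \autoref{L:ribbon_prefix} to see that $\ell\preccurlyeq r_{X,t}$ forces $\ell=r_{X,t}$; making the induction on $|S|$ mesh cleanly with the transfer lemma — so that the parabolic closure in $A_X$ is at hand exactly where it is invoked — is the other point that needs care.
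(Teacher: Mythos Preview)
Your strategy differs substantially from the paper's, and the difference is instructive. The paper does \emph{not} attempt to re-run the arrow analysis of \autoref{P:arrows_in_positive_graph} on $RSSS_\infty(\alpha)$. Instead it reduces well-definedness directly to the already-proved positive case via the transport map: given $\alpha',\alpha''\in RSSS_\infty(\alpha)$ and $g$ with $g^{-1}\alpha' g=\alpha''$, choose $N$ so that $\alpha'=x^{-1}y$ and $\alpha''=u^{-1}v$ with $x,y,u,v$ simple for $\Delta_S^N$; invoke \autoref{C:conjugated_conjugating_elements_twisted} for $\alpha',\alpha''$ and for $(\alpha')^{-1},(\alpha'')^{-1}$ to obtain $g^{-1}\widetilde C_M(\alpha')\,g=\widetilde C_M(\alpha'')$ and $g^{-1}\widetilde C_T((\alpha')^{-1})\,g=\widetilde C_T((\alpha'')^{-1})$. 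The inverses of these twisted-cycling products are \emph{positive} elements $w_1(\alpha'),w_2(\alpha')\in A_Z^+$, and since $x$ is a suffix of $w_1(\alpha')$ and $y$ a suffix of $w_2(\alpha')$, the product $w(\alpha')=w_1(\alpha')w_2(\alpha')$ has support exactly $Z$. Now \autoref{C:conjugators_of_positive_elements}, applied to the positive conjugates $w(\alpha')$ and $w(\alpha'')$, gives $g^{-1}A_Zg=A_U$. No induction on $|S|$, no transfer lemma, no new arrow analysis. Your minimality argument, by contrast, is essentially the paper's.

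Your route has a genuine gap in the ribbon case. You correctly obtain $\ell\preccurlyeq r_{X,t}$ from convexity of $RSSS_\infty(\alpha)$ (since $t\preccurlyeq \ell\wedge r_{X,t}$ and $v^{r_{X,t}}\in RSSS_\infty(\alpha)$ by your transfer lemma, minimality of $\ell$ forces $\ell\wedge r_{X,t}=\ell$). But the step ``$\ell\preccurlyeq r_{X,t}$ forces $\ell=r_{X,t}$'' does not follow from \autoref{L:ribbon_prefix} and convexity alone. In \autoref{P:arrows_in_positive_graph} this equality was obtained from the diagram argument built on the decomposition $v=v_1\cdots v_r$ with $v_i\in X$: that is what eventually shows $\ell$ conjugates $X$ letterwise into $S$, hence $r_{X,t}\preccurlyeq \ell$. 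Without positivity of $v$ there is no such decomposition, and nothing you have written rules out a proper prefix $\ell\prec r_{X,t}$ with $v^\ell\in RSSS_\infty(\alpha)$ and $\mathrm{supp}(v^\ell)\subseteq X\cup\{t\}$. Your induction on $|S|$ does not rescue this: when $X\cup\{t\}=S$ there is nowhere to induct, and even when $|X\cup\{t\}|<|S|$ the inductive hypothesis yields well-definedness of parabolic closures inside $A_{X\cup\{t\}}$, not the letterwise-conjugation property needed here. A smaller omission: you reduce away only the case ``$\alpha$ conjugate to positive'', but not ``$\alpha^{-1}$ conjugate to positive''; in the latter case the positive part $y$ of elements of $RSSS_\infty(\alpha)$ vanishes, and your argument that decycling stays inside $A_Z$ (which passes through twisted cycling of the inverse) no longer applies as stated. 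The paper disposes of that case by the symmetry $P_\alpha=P_{\alpha^{-1}}$ before entering the general argument.
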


\begin{proof}
Write $\alpha'=x^{-1}y$ in $np$-normal form, and recall that $Z=\mbox{supp}(\alpha')=\mbox{supp}(x)\cup \mbox{supp}(y)$.
If $\alpha$ does not belong to a proper parabolic subgroup, none of its conjugates does, hence $\mbox{supp}(v)=S$ for every $v\in RSSS_\infty(\alpha)$, so $P_\alpha=A_S$ is well defined, and it is indeed the smallest parabolic subgroup containing~$\alpha$. Hence we can assume that $\alpha$ belongs to some proper parabolic subgroup.

If $\alpha$ is conjugate to a positive element, then all elements in $SSS(\alpha)$ will be positive. As $RSSS_\infty(\alpha)\subseteq RSSS(\alpha)\subseteq SSS(\alpha)$, it follows that $\alpha'$ is positive and $Z=\mbox{supp}(\alpha')$.
Therefore the above definition of $P_\alpha$ coincides with the definition we gave in the previous section.
Hence, by~\autoref{P:P_beta_well defined_positive case}, $P_\alpha$ is well defined and it is the smallest parabolic subgroup containing~$\alpha$.

Suppose that $\alpha^{-1}$ is conjugate to a positive element.
As one has $\beta\in SSS(\alpha)$ if and only if $\beta^{-1}\in SSS(\alpha^{-1})$, the inverse of every element of $RSSS(\alpha)\subseteq SSS(\alpha)$ is positive. In this case $Z=\mbox{supp}\left((\alpha')^{-1}\right)$, and $P_\alpha$ coincides with the definition of $P_{\alpha^{-1}}$ in the previous section. Hence $P_\alpha$ is well defined, and it is the smallest parabolic subgroup containing $\alpha^{-1}$, thus the smallest one containing~$\alpha$.

We can then assume that $\alpha$ belongs to some proper parabolic subgroup, and that the $np$-normal form of $\alpha'$ has the form $\alpha'=x^{-1}y= x_s^{-1}\cdots x_1^{-1} y_1\cdots y_t$, with $s,t>0$ (that is, $x,y\neq 1$).

Let $N=\max(s,t)$, and let us use the Garside structure $(A_S, A_S^+, \Delta_S^N)$. With respect to this structure, $x$ and~$y$ are simple elements and, as $\alpha'\in RSSS_\infty(\alpha)$, both $\alpha'$ and $(\alpha')^{-1}$ belong to their respective ultra summit sets.

In order to show that $P_\alpha$ is well defined, let $\alpha''= \gamma^{-1}\alpha \gamma\in RSSS_\infty(\alpha)$.  We have to show that every element~$g$ conjugating $\alpha'$ to~$\alpha''$, conjugates $A_Z$ to $A_U$, where $Z=\mbox{supp}(\alpha')$ and $U=\mbox{supp}(\alpha'')$. We will show this by constructing positive elements with supports $Z$ and~$U$, respectively, which are also conjugate by~$g$; then the claim follows by \autoref{C:conjugators_of_positive_elements}.

As we are using the Garside structure $(A_S, A_S^+, \Delta_S^N)$, we have that $\alpha'=x^{-1}y$ in $np$-normal form, where $x$ and $y$ are simple elements, and also $\alpha''=u^{-1}v$ in $np$-normal form, where $u$ and $v$ are simple elements, since both $\alpha'$ and $\alpha''$ belong to $SSS_N(\alpha)$, so they have the same infimum and supremum.

Let then $g$ be an element such that $g^{-1} \alpha' g = \alpha''$, and recall that $\alpha',\alpha''\in RSSS_\infty(\alpha) \subseteq USS_\infty(\alpha) \subseteq USS_N(\alpha)$. By~\autoref{C:conjugated_conjugating_elements_twisted}, there is a positive integer $M$ such that $g^{-1}\widetilde C_M(\alpha') g = \widetilde C_M(\alpha'')$, where $\widetilde C_M(\alpha')$ (resp.\ $\widetilde C_M(\alpha'')$) is the product of the conjugating elements for $M$ consecutive twisted cyclings of~$\alpha'$ (resp.\ $\alpha''$) with respect to the Garside structure $(A_S, A_S^+, \Delta_S^N)$.

By~\autoref{L:twisted_cycling}, $\widetilde C_M(\alpha')$ is the inverse of a positive element, say $w_1(\alpha'):=\widetilde C_M(\alpha')^{-1}$. Recall that $\alpha'\in A_Z$, hence the factors composing $\widetilde C_M(\alpha')$ belong to~$A_Z$, and then $w_1(\alpha')\in A_Z^+$. In the same way, $\widetilde C_M(\alpha'')$ is the inverse of a positive element, say $w_1(\alpha''):=\widetilde C_M(\alpha'')^{-1}\in A_U^+$. We then have $g^{-1} w_1(\alpha') g = w_1(\alpha'')$, where $w_1(\alpha')\in A_Z^+$ and $w_1(\alpha'')\in A_U^+$.

Now, from $g^{-1} \alpha' g = \alpha''$ we obtain $g^{-1} (\alpha')^{-1} g = (\alpha'')^{-1}$. Since $\alpha',\alpha''\in RSSS_\infty(\alpha)$, it follows that $(\alpha')^{-1},(\alpha'')^{-1}\in USS_\infty(\alpha^{-1}) \subseteq USS_N(\alpha^{-1})$.
Thus, we can apply~\autoref{C:conjugated_conjugating_elements_twisted} and obtain that there is a positive integer~$T$ such that $g^{-1}\widetilde C_T((\alpha')^{-1}) g = \widetilde C_T((\alpha'')^{-1})$ in the same way as above. If we denote $w_2(\alpha') = \widetilde C_T((\alpha')^{-1})^{-1}$ and $w_2(\alpha'') = \widetilde C_T((\alpha'')^{-1})^{-1}$, we have $g^{-1} w_2(\alpha') g = w_2(\alpha'')$, where $w_2(\alpha')\in A_Z^+$ and $w_2(\alpha'')\in A_U^+$.

Let us denote $w(\alpha')=w_1(\alpha')w_2(\alpha')\in A_Z^+$ and $w(\alpha'')=w_1(\alpha'')w_2(\alpha'')\in A_U^+$.
By construction, $g^{-1} w(\alpha') g = w(\alpha'')$.
We will now show that $\mbox{supp}(w(\alpha'))=Z$ and $\mbox{supp}(w(\alpha''))=U$.

Notice that the conjugating element for twisted cycling of $\alpha'=x^{-1}y$ using the Garside structure $(A_S, A_S^+, \Delta_S^N)$ is~$x^{-1}$. By \autoref{L:twisted_cycling}, $x$ is a suffix of~$w_1(\alpha')$. On the other hand, the conjugating element for twisted cycling of $(\alpha')^{-1}=y^{-1}x$ is~$y^{-1}$. Hence, $y$ is a suffix of $w_2(\alpha')$. This implies that $Z=\mbox{supp}(\alpha')=\mbox{supp}(x)\cup \mbox{supp}(y)\subseteq \mbox{supp}(w_1(\alpha'))\cup \mbox{supp}(w_2(\alpha'))=\mbox{supp}(w(\alpha'))\subseteq Z$, whence $\mbox{supp}(w(\alpha'))=Z$. In the same way it follows that $\mbox{supp}(w(\alpha''))=U$.

We can then apply~\autoref{C:conjugators_of_positive_elements}, since $g^{-1} w(\alpha') g = w(\alpha'')$, to conclude that $g^{-1} A_Z g =A_U$, as we wanted to show. This means that $P_\alpha$ is well defined, as taking $g= \beta^{-1}\gamma$ (a conjugating element from $\alpha'$ to~$\alpha''$) one has:
$$
 \beta A_Z \beta^{-1} = \beta g A_U g^{-1} \beta^{-1} = \gamma A_U \gamma^{-1},
$$
hence one can equally use either $\alpha'$ or $\alpha''$ to define~$P_\alpha$.

Now let us prove that~$P_\alpha$ is the smallest parabolic subgroup (for inclusion) containing~$\alpha$. Suppose that $P$ is a parabolic subgroup containing~$\alpha$, and let $a\in A_S$ be an element such that $a^{-1}Pa=A_X$ is standard. Then $a^{-1}\alpha a\in A_X$. We can now apply iterated twisted cyclings and iterated decyclings to this element (in all needed Garside structures), so that the resulting element, say $\widehat \alpha$, belongs to $RSSS_\infty(\alpha)$. The product of all conjugating elements, call it~$b$, will belong to~$A_X$, so we will have $\widehat\alpha = b^{-1}a^{-1} \alpha ab \in A_X \cap RSSS_\infty(\alpha)$.

Let $Y=\mbox{supp}(\widehat \alpha)$. By definition, we have $P_\alpha = ab A_Y b^{-1}a^{-1}$. But on the other hand, as $\widehat \alpha\in A_X$, all letters in the $np$-normal form of $\widehat\alpha$ belong to~$A_X$. Hence $A_Y\subseteq A_X$, and we finally have:
$$
   P_\alpha = ab A_Y b^{-1}a^{-1} \subseteq ab A_X b^{-1}a^{-1} = a A_X a^{-1} =P.
$$
Therefore, $P_\alpha$ is contained in every parabolic subgroup containing $\alpha$, as we wanted to show.
\end{proof}

\section{Parabolic subgroups, powers and roots}

In this section we will see that the parabolic closure $P_\alpha$ of $\alpha\in A_S$ behaves as expected under conjugation, taking powers and taking roots.
The behavior under conjugation follows directly from the definition:

\begin{lemma}\label{L:associated_parabolic_and_conjugacy}
For every $\alpha,x \in A_S$, one has $P_{x^{-1}\alpha x} = x^{-1}P_\alpha x$.
\end{lemma}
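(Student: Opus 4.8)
The plan is to deduce the statement directly from the definition of the parabolic closure, using the fact that all the constructions involved behave well under conjugation. First I would observe that the set $RSSS_\infty(\gamma)$ depends only on the conjugacy class of $\gamma$: for each $n\geq 1$ the set $RSSS_n(\gamma)$ is defined as the collection of elements of the conjugacy class $\gamma^G$ that lie in closed orbits under both cycling and decycling with respect to the Garside structure $(A_S,A_S^+,\Delta_S^n)$, and this description makes no reference to $\gamma$ beyond its conjugacy class. Hence $RSSS_n(\gamma')=RSSS_n(\gamma)$ whenever $\gamma'$ is conjugate to $\gamma$, and therefore $RSSS_\infty(\gamma')=RSSS_\infty(\gamma)$. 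In particular, since $x^{-1}\alpha x$ is conjugate to $\alpha$, we have $RSSS_\infty(x^{-1}\alpha x)=RSSS_\infty(\alpha)$.

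Next I would fix an element $\alpha'\in RSSS_\infty(\alpha)$, say $\alpha'=\beta^{-1}\alpha\beta$, and put $Z=\mbox{supp}(\alpha')$, so that $P_\alpha=\beta A_Z\beta^{-1}$ by definition. By the previous paragraph $\alpha'$ also belongs to $RSSS_\infty(x^{-1}\alpha x)$, and one can rewrite
$$
  \alpha'=\beta^{-1}\alpha\beta=(x^{-1}\beta)^{-1}(x^{-1}\alpha x)(x^{-1}\beta),
$$
so $x^{-1}\beta$ is a conjugating element exhibiting $\alpha'$ as a representative used in the definition of $P_{x^{-1}\alpha x}$. Since the support $Z$ of $\alpha'$ is an intrinsic feature of $\alpha'$ (it does not matter which element we are computing the parabolic closure of), the definition gives
$$
  P_{x^{-1}\alpha x}=(x^{-1}\beta)A_Z(x^{-1}\beta)^{-1}=x^{-1}\bigl(\beta A_Z\beta^{-1}\bigr)x=x^{-1}P_\alpha x.
$$

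The only point that requires care is that $P_\alpha$ and $P_{x^{-1}\alpha x}$ are \emph{well defined}, i.e.\ independent of the chosen representative of the respective $RSSS_\infty$; this is precisely the content of \autoref{P:minimalContainingParabolic}, which may be invoked, so the computation above with one particular choice of $\alpha'$ already suffices. Alternatively, and even more briefly, one can argue from \autoref{P:minimalContainingParabolic} that $P_\alpha$ is the smallest parabolic subgroup containing $\alpha$: conjugation by $x$ is an inclusion-preserving bijection from the set of parabolic subgroups of $A_S$ to itself which carries the parabolic subgroups containing $\alpha$ onto those containing $x^{-1}\alpha x$, hence it carries the smallest one onto the smallest one. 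There is essentially no obstacle here; the lemma is a formal consequence of the definition together with the well-definedness already established, which is why it can be stated before the results on powers and roots.
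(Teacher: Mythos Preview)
Your proof is correct and follows essentially the same approach as the paper: pick $\alpha'=\beta^{-1}\alpha\beta\in RSSS_\infty(\alpha)$, rewrite it as $(x^{-1}\beta)^{-1}(x^{-1}\alpha x)(x^{-1}\beta)$, and apply the definition of the parabolic closure with conjugating element $x^{-1}\beta$. The paper's version is terser (it leaves the conjugacy-invariance of $RSSS_\infty$ implicit and omits your alternative minimality argument), but the core computation is identical.
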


\begin{proof}
Let $\alpha'= \beta^{-1}\alpha \beta \in RSSS_\infty(\alpha)$. Then $\alpha'=\beta^{-1}x(x^{-1}\alpha x)x^{-1}\beta$.

If $X=\mbox{supp}(\alpha')$, by definition we have $P_\alpha=\beta A_X \beta^{-1}$, and also
$$
 P_{x^{-1}\alpha x}=x^{-1}\beta A_X \beta^{-1} x = x^{-1} P_\alpha x.
$$
\end{proof}

The behavior of $P_\alpha$ when taking powers or roots is not so easy, but it is also as expected:

\begin{theorem}\label{T:parabolic_for_powers_and_roots}
Let $A_S$ be an Artin--Tits group of spherical type. If $\alpha\in A_S$ and~$m$ is a nonzero integer, then $P_{\alpha^m}=P_\alpha$.
\end{theorem}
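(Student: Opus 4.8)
The inclusion $P_{\alpha^m}\subseteq P_\alpha$ is immediate: by \autoref{P:minimalContainingParabolic}, $P_\alpha$ is a parabolic subgroup containing $\alpha$, hence it contains $\alpha^m$, and $P_{\alpha^m}$ is the smallest parabolic subgroup containing $\alpha^m$. Since a subgroup contains an element if and only if it contains its inverse, $P_{\alpha^{-1}}=P_\alpha$ and $P_{\alpha^{-m}}=P_{\alpha^m}$, so we may assume $m>0$. For the reverse inclusion it is enough to prove $\alpha\in P_{\alpha^m}$, because then $P_\alpha\subseteq P_{\alpha^m}$ by minimality of $P_\alpha$.

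The plan is to pick a conjugate of $\alpha$ adapted to the problem. By \autoref{P:I_infty_is_nonempty} we may choose $\alpha'=\beta^{-1}\alpha\beta\in SU_\infty(\alpha)$. Since $SU_n\subseteq RSSS_n$ for every $n$, we have $\alpha'\in RSSS_\infty(\alpha)$, so by definition $P_\alpha=\beta A_Z\beta^{-1}$ with $Z=\mbox{supp}(\alpha')$. On the other hand, for every $n\geq 1$ the hypothesis $\alpha'\in SU_n(\alpha)$ yields both $(\alpha')^m\in USS_n\big((\alpha')^m\big)$ and $(\alpha')^{-m}\in USS_n\big((\alpha')^{-m}\big)$; hence $(\alpha')^m$ and its inverse both lie in closed orbits under cycling with respect to $(A_S,A_S^+,\Delta_S^n)$, which is exactly the condition $(\alpha')^m\in RSSS_n\big((\alpha')^m\big)=RSSS_n(\alpha^m)$. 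Therefore $(\alpha')^m\in RSSS_\infty(\alpha^m)$, and as $(\alpha')^m=\beta^{-1}\alpha^m\beta$ we obtain $P_{\alpha^m}=\beta A_{Z'}\beta^{-1}$ with $Z'=\mbox{supp}\big((\alpha')^m\big)$. Thus the whole theorem reduces to the equality $\mbox{supp}\big((\alpha')^m\big)=\mbox{supp}(\alpha')$, for $\alpha'\in SU_\infty(\alpha)$.

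One inclusion is trivial: $\alpha'\in A_Z$ forces $(\alpha')^m\in A_Z$, so $Z'\subseteq Z$. For $Z\subseteq Z'$, I would first dispose of the degenerate cases: writing the $np$-normal form $\alpha'=x^{-1}y$, if $x=1$ then $\alpha'$ is positive, hence so is $(\alpha')^m$, and the support of a positive element is additive under products; symmetrically if $y=1$, using that the $np$-normal form of $g^{-1}$ is $y^{-1}x$ when $g=x^{-1}y$, so $\mbox{supp}(g^{-1})=\mbox{supp}(g)$. This also covers the periodic case, since a periodic element in its super summit set is a power of $\Delta_S$, hence positive or inverse-positive. So assume $x\neq 1\neq y$, and proceed by induction on $|S|$: if $Z\subsetneq S$ then $\alpha'\in A_Z$, and since $\alpha'\in RSSS_\infty(\alpha')$ one checks, using that parabolic subgroups of $A_Z$ are parabolic subgroups of $A_S$, that the parabolic closure of $\alpha'$ computed inside the smaller Artin--Tits group $A_Z$ equals $A_Z$; applying the theorem in $A_Z$ (valid by induction) to $\alpha'$ then gives $P^{A_Z}_{(\alpha')^m}=A_Z$, whereas $(\alpha')^m$ lies in $A_{\mbox{\scriptsize supp}((\alpha')^m)}$, forcing $\mbox{supp}((\alpha')^m)=Z$. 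Decomposing $A_S$ into irreducible factors, over which all the objects split, this leaves only the case that $A_S$ is irreducible and $Z=\mbox{supp}(\alpha')=S$, i.e.\ $P_\alpha=A_S$. In that case I would argue directly: choosing $N\geq\max(\ell_S(x),\ell_S(y))$, in the Garside structure $(A_S,A_S^+,\Delta_S^N)$ the elements $x,y$ are simple, $\alpha'=x^{-1}y$ is its own $np$-normal form with one simple factor in each part, and $\mbox{supp}(x)\cup\mbox{supp}(y)=S$; since $\alpha'\in SU_N(\alpha)$, all powers of $\alpha'$ stay in their ultra summit sets, hence in their super summit sets, for this structure, and the standard analysis of powers of such stable elements shows $\inf_N$ and $\sup_N$ are additive along powers, so no cancellation occurs when the copies of $\alpha'$ in $(\alpha')^m$ are multiplied and brought into $np$-normal form; consequently every generator occurring in $x$ or in $y$ still occurs in that normal form, and $S=\mbox{supp}(x)\cup\mbox{supp}(y)\subseteq\mbox{supp}\big((\alpha')^m\big)$.

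I expect the main obstacle to be precisely this last point: making ``no cancellation'' quantitative enough to conclude that the support cannot drop under powers. The subtlety is that in left normal form the factors of $(\alpha')^m$ are shuffled by powers of $\tau_S$, which need not preserve $\mbox{supp}(\alpha')$; working throughout with the $np$-normal form (which carries no $\tau$-twists, cf.\ \autoref{L:twisted_cycling}) and, if necessary, enlarging the Garside element to $\Delta_S^{2N}$ so that $\tau$ becomes trivial, should be what makes the argument go through.
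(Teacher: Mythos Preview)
Your setup via $SU_\infty$ is a genuine streamlining of the paper's argument: picking $\alpha'\in SU_\infty(\alpha)$ gives at once $\alpha'\in RSSS_\infty(\alpha)$ and $(\alpha')^m\in RSSS_\infty(\alpha^m)$, so the reduction to $\mbox{supp}\big((\alpha')^m\big)=\mbox{supp}(\alpha')$ is immediate. The paper instead first conjugates into $RSSS_\infty(\alpha)$ and then further conjugates by the cycling/decycling elements for the $m$-th power, invoking convexity (\autoref{L:convexity}, \autoref{L:conjugating_elements_to_summit_sets}) to stay inside $RSSS_\infty(\alpha)$; your route sidesteps this. The induction on $|S|$ is correct but a detour: it only reduces to the case $\mbox{supp}(\alpha')=S$, which still needs the hard step.

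The gap is exactly where you put it, and it is real. The implication ``$\inf_N,\sup_N$ additive $\Rightarrow$ no cancellation $\Rightarrow$ support preserved'' fails in the middle: even granting $\inf_N\big((\alpha')^m\big)=-m$ and $\sup_N\big((\alpha')^m\big)=m$, if one writes $(\alpha')^m=(x_m\cdots x_1)^{-1}(y_m\cdots y_1)$ and cancels the common prefix $d=(x_m\cdots x_1)\wedge(y_m\cdots y_1)$, the resulting $np$-normal form can keep the same number of simple factors while losing letters of $\mbox{supp}(d)$; knowing how many factors survive does not tell you which generators do. What is actually needed is $d=1$, i.e.\ $\alpha^m\wedge 1=x_1^{-1}\cdots x_m^{-1}$. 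The paper extracts this from \cite[Theorem~2.9]{BGG}: for $\alpha\in USS(\alpha)$ with $\ell(\alpha)>1$ and $\inf(\alpha)=p$ one has $\alpha^m\Delta^{-mp}\wedge\Delta^m=C_m(\alpha)$, and in the structure $(A_S,A_S^+,\Delta_S^N)$ with $p=-1$, $\ell=2$ this translates (via the twisted-cycling description of \autoref{L:twisted_cycling}) to the statement that the negative part of the $np$-normal form of $(\alpha')^m$ is exactly $x_m\cdots x_1$. Then $x_1=x$ and $y_1=y$ survive as factors and $\mbox{supp}(x)\cup\mbox{supp}(y)\subseteq\mbox{supp}\big((\alpha')^m\big)$. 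In short, ``no cancellation'' is not a formal consequence of stability but the content of a specific theorem about iterated cycling; once you cite it, your proof closes.
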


\begin{proof}
    By~\autoref{L:associated_parabolic_and_conjugacy}, we can conjugate $\alpha$ to assume that $\alpha\in RSSS_\infty(\alpha)$. We can further conjugate $\alpha$ by the conjugating elements for iterated twisted cycling and iterated decycling of its $m$-th power (for all needed Garside structures), in order to take $\alpha^m$ to $RSSS_\infty(\alpha^m)$. By \autoref{L:conjugating_elements_to_summit_sets}, these conjugating elements are the meet of two elements which conjugate $\alpha$ to other elements in  $RSSS_\infty(\alpha)$. Hence, by the convexity of $RSSS_\infty(\alpha)$,
 they maintain $\alpha$ in $RSSS_\infty(\alpha)$. In summary, up to conjugacy we can assume that $\alpha\in RSSS_\infty(\alpha)$ and $\alpha^m\in RSSS_\infty(\alpha^m)$. We can also assume that $m$ is positive, as $P_{\alpha^{-m}}=P_{\alpha^m}$ by definition.

Under these assumptions, the parabolic closures of $\alpha$ and $\alpha^m$ will be determined by their corresponding supports. Hence, we must show that $\mbox{supp}(\alpha)=\mbox{supp}(\alpha^m)$.

If either $\alpha$ or $\alpha^{-1}$ is positive, the result is clear. We can then assume that this is not the case, that is, we have $\alpha=x_1^{-1}y_1$ in $np$-normal form with $x_1,y_1\neq 1$. Moreover, using a suitable Garside structure, we can assume that $x_1$ and $y_1$ are simple elements.

We will now use the following result~\cite[Theorem 2.9]{BGG}: If $\alpha\in USS(\alpha)$, $\inf(\alpha)=p$, $\ell(\alpha)>1$ and $m\geq 1$, one has
\begin{equation}\label{EQ:IteratedCycling}
    \alpha^m \Delta^{-mp} \wedge \Delta^m = C_m(\alpha).
\end{equation}

In other words, if we consider the positive element $\alpha^m \Delta^{-mp}$, and compute the first $m$ factors in its left normal form (including any $\Delta$ factors), the product of these $m$ factors equals the product of the first $m$ conjugating elements for iterated cyclings of~$\alpha$.

We point out that we are using a Garside structure $(A_S,A_S^+,\Delta_S^N)$ such that $\alpha=x_1^{-1}y_1$, where $x_1$ and $y_1$ are non-trivial simple elements.
So the $\Delta$ in \eqref{EQ:IteratedCycling} means $\Delta_S^N$ in our case.
Also, we remark that $\alpha$ satisfies the hypotheses for~\eqref{EQ:IteratedCycling}, as $\alpha \in RSSS_\infty(\alpha)\subseteq USS_\infty(\alpha)\subseteq USS_N(\alpha)$, and the left normal form of $\alpha$ is $\Delta^{-1}\widetilde x_1 y_1$, so $\inf(\alpha)=-1$ and $\ell(\alpha)=2$.

We will restate the above result from~\cite{BGG} in terms of $np$-normal forms. It turns out that the statement will become much nicer.
%\commV{Should we state this as a proposition on its own?}

\begin{figure}
\[
\xymatrix@C=12mm@R=12mm{
 & & & &
\\
  \ar[u]_{x_1} \ar[r]^{y_1} & & & &
\\
  \ar[u]_{x_2} \ar[r]^{y_2}
& \ar[u]_{x_1} \ar[r]^{y_1} & & &
\\
  \ar[u]_{x_3} \ar[r]^{y_3}
& \ar[u]_{x_2} \ar[r]^{y_2}
& \ar[u]_{x_1} \ar[r]^{y_1} & &
\\
  \ar[u]_{x_4} \ar[r]^{y_4}
& \ar[u]_{x_3} \ar[r]^{y_3}
& \ar[u]_{x_2} \ar[r]^{y_2}
& \ar[u]_{x_1} \ar[r]^{y_1} &
}
\]
\caption{How to transform $\alpha^4=(x_1^{-1}y_1)^4$ into $x_1^{-1}x_2^{-1}x_3^{-1} x_4^{-1}y_4 y_3 y_2 y_1$. Around each square, the product $x_i^{-1}y_i$ is the $np$-normal form of $y_{i-1}x_{i-1}^{-1}$.}
\label{F:stairs}
\end{figure}
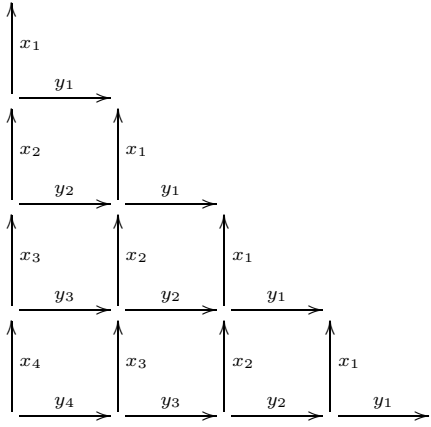

Write $\alpha^m = x_1^{-1} y_1 x_1^{-1} y_1 \cdots x_1^{-1} y_1$ (see~\autoref{F:stairs}). Notice that applying a twisted cycling to $\alpha$ means to conjugate it by $x_1^{-1}$, and one obtains $\alpha_2=y_1 x_1^{-1}$, whose $np$-normal form will be of the form $\alpha_2=x_2^{-1}y_2$. We then have:
$$
   \alpha^m =  x_1^{-1} (x_2^{-1} y_2 x_2^{-1} y_2 \cdots x_2^{-1} y_2) y_1
$$
Now we can apply twisted cycling to $\alpha_2$ (conjugating it by $x_2^{-1}$), and we obtain $\alpha_3 = x_3^{-1}y_3$. Then we see that:
$$
  \alpha^m = x_1^{-1}x_2^{-1}(x_3^{-1}y_3\cdots x_3^{-1}y_3)y_2y_1.
$$
Repeating the process $m$ times, we finally obtain:
$$
  \alpha^m = x_1^{-1}x_2^{-1}\cdots x_m^{-1}y_m\cdots y_2y_1.
$$
That is, we have written $\alpha^m$ as a product of a negative times a positive element, where the negative one is the product of the first $m$ conjugating elements for iterated twisted cycling of~$\alpha$. We will see that~\cite[Theorem 2.9]{BGG} is equivalent to the fact that no cancellation occurs between $x_1^{-1}x_2^{-1}\cdots x_m^{-1}$ and $y_m\cdots y_2y_1$.

Indeed, on the one hand, as $p=-1$, we have the element $\alpha^m\Delta^{-pm} = \alpha^m \Delta^{m}$, which is:
$$
  \alpha^m \Delta^{m} = x_1^{-1}\cdots x_m^{-1} y_m\cdots y_1 \Delta^{m}
$$
$$
  = (x_1^{-1}\Delta)(\Delta^{-1}x_2^{-1}\Delta^2) \cdots (\Delta^{m-1}x_m^{-1}\Delta^m)  \tau^m(y_m\cdots y_1).
$$
On the other hand, for $i=1,\ldots,m$, we have $\alpha_{i}=\widetilde c^{i-1}(\alpha)= \tau^{-i+1}\circ c^{i-1}(\alpha)$, so $c^{i-1}(\alpha)= \tau^{i-1}(\alpha_{i})=\tau^{i-1}(x_i)^{-1}\tau^{i-1}(y_i)$. This means that the $i$-th conjugating element for iterated cycling of $\alpha$ is $\tau^{i-1}(x_i)^{-1} \Delta = \Delta^{i-1} x_i^{-1} \Delta^i$.

Hence, by~\cite[Theorem 2.9]{BGG} the product of the first $m$ factors of the left normal form of $\alpha^m \Delta^{m}$ is equal to $(x_1^{-1}\Delta)(\Delta^{-1}x_2^{-1}\Delta^2) \cdots (\Delta^{m-1}x_m^{-1}\Delta^m)$, that is, to $x_1^{-1}x_2^{-1}\cdots x_m^{-1}\Delta^m$. In other words:
$$
  \alpha^m \Delta^{m} \wedge \Delta^m = x_1^{-1}x_2^{-1}\cdots x_m^{-1}\Delta^m.
$$
As the greatest common prefix is preserved by multiplication on the right by any power of~$\Delta$, we obtain:
$$
    \alpha^m \wedge 1 = x_1^{-1}x_2^{-1}\cdots x_m^{-1}.
$$
But the biggest common prefix of an element and 1 is precisely the negative part of its $np$-normal form, so this shows that there is no cancellation between $x_1^{-1}x_2^{-1}\cdots x_m^{-1}$ and $y_m\cdots y_2y_1$, as we claimed.

But then:
\begin{align*}
\mbox{supp}(\alpha^m) &= \mbox{supp}(x_m\cdots x_1)\cup \;\mbox{supp}(y_m\cdots y_1) \\
                      &\supseteq \mbox{supp}(x_1)\cup \;\mbox{supp}(y_1)=\mbox{supp}(\alpha).
\end{align*}

Since it is clear that $\mbox{supp}(\alpha^m)\subseteq \mbox{supp}(\alpha)$ (as no new letters can appear when computing the $np$-normal form of $\alpha^m$ starting with $m$ copies of the $np$-normal form of $\alpha$), we finally have $\mbox{supp}(\alpha^m)= \mbox{supp}(\alpha)$, as we wanted to show.
\end{proof}

This behavior of $P_\alpha$ with respect to taking powers or roots allows us to show an interesting consequence: All roots of an element in a parabolic subgroup belong to the same parabolic subgroup.

\begin{corollary}\label{C:roots_in_parabolic}
Let $A_S$ be an Artin--Tits group of spherical type. If $\alpha$ belongs to a parabolic subgroup~$P$, and $\beta\in A_S$ is such that $\beta^m=\alpha$ for some nonzero integer~$m$, then $\beta\in P$.
\end{corollary}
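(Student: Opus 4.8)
The plan is to deduce this immediately from \autoref{T:parabolic_for_powers_and_roots} together with the defining property of the parabolic closure established in \autoref{P:minimalContainingParabolic}. The key observation is that every element lies in its own parabolic closure, since $P_\gamma$ is by definition a parabolic subgroup containing $\gamma$; and that $P_\gamma$ is the \emph{smallest} such subgroup, so any parabolic subgroup containing $\gamma$ must contain $P_\gamma$.

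Concretely, I would argue as follows. Since $\alpha$ belongs to the parabolic subgroup $P$, \autoref{P:minimalContainingParabolic} gives $P_\alpha\subseteq P$. Now apply \autoref{T:parabolic_for_powers_and_roots} to the element $\beta$ and the nonzero integer $m$: this yields $P_\beta=P_{\beta^m}=P_\alpha$. Combining these, $P_\beta=P_\alpha\subseteq P$. Finally, $\beta\in P_\beta$ because $P_\beta$ is a parabolic subgroup containing $\beta$, so $\beta\in P_\beta\subseteq P$, as claimed.

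There is essentially no obstacle here once \autoref{T:parabolic_for_powers_and_roots} and \autoref{P:minimalContainingParabolic} are in hand; the only point worth stating explicitly is the trivial-but-necessary remark that $\gamma\in P_\gamma$ for every $\gamma\in A_S$, which is immediate from the definition of the parabolic closure. One could optionally remark that the statement is symmetric in a useful way: it says precisely that the property ``$\gamma\in P$'' for a fixed parabolic subgroup $P$ is invariant under passing to roots, complementing the fact (from the same theorem) that it is invariant under passing to powers.
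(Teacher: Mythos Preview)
Your proof is correct and matches the paper's own argument essentially line for line: both use $P_\alpha = P_\beta$ from \autoref{T:parabolic_for_powers_and_roots}, then $P_\alpha \subseteq P$ from minimality, and conclude $\beta \in P_\beta = P_\alpha \subseteq P$. The only difference is that you spell out $\beta \in P_\beta$ explicitly, which the paper leaves implicit.
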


\begin{proof}
Since $\alpha$ is a power of $\beta$,~\autoref{T:parabolic_for_powers_and_roots} tells us that $P_\alpha=P_\beta$. Now $\alpha \in P$ implies $P_\alpha\subseteq P$, as $P_\alpha$ is the minimal parabolic subgroup containing~$\alpha$. Hence $\beta \in P_\beta =P_\alpha \subseteq P$.
\end{proof}

\section{Intersection of parabolic subgroups}

In this section we will show one of the main results of this paper: The intersection of two parabolic subgroups in an Artin--Tits group of spherical type $A_S$ is also a parabolic subgroup.

We will use the parabolic closure $P_\alpha$ of an element $\alpha\in A_S$, but we will also need some technical results explaining how the left normal form of a positive element, in which some factors equal $\Delta_X$ for some $X\subseteq S$, behaves when it is multiplied by another element:

\begin{lemma}
Let $X\subseteq S$ be nonempty, and let $\alpha\in A_S^+$ be a simple element such that $\Delta_X \alpha$ is simple. Then there is $Y\subseteq S$ and a decomposition $\alpha=\rho \beta$ in $A_S^+$, such that $X\rho = \rho Y$ and the left normal form of $\Delta_X (\Delta_X \alpha)$ is $(\rho \Delta_Y)(\Delta_Y \beta)$.
\end{lemma}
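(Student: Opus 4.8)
First I would pin down $\rho$. The first factor of any left normal form is the maximal simple prefix of the element, and $\Delta_X(\Delta_X\alpha)$ is a product of two simple elements, hence has canonical length at most $2$; so the left normal form of $\Delta_X(\Delta_X\alpha)$ has the shape $(V)\bigl(V^{-1}\Delta_X(\Delta_X\alpha)\bigr)$ with $V:=\Delta_X(\Delta_X\alpha)\wedge\Delta_S$. Since $\Delta_X$ is simple and $\Delta_X\preccurlyeq\Delta_X(\Delta_X\alpha)$ we get $\Delta_X\preccurlyeq V$, so $V=\Delta_X\rho$ for a unique $\rho\in A_S^+$; cancelling $\Delta_X$ on the left (left multiplication is a lattice automorphism of $(A_S,\preccurlyeq)$) yields $\rho=\partial_S(\Delta_X)\wedge(\Delta_X\alpha)$. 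The lemma then reduces to the two assertions: (a)~$\rho\preccurlyeq\alpha$, and (b)~$\rho^{-1}X\rho=Y$ for some $Y\subseteq S$. Indeed, (b) gives $\rho^{-1}\Delta_X\rho=\Delta_Y$ (conjugation by $\rho$ is an isomorphism $A_X\to A_Y$ taking standard generators to standard generators, hence the Garside element to the Garside element), so $V=\Delta_X\rho=\rho\Delta_Y$; and with $\beta:=\rho^{-1}\alpha\in A_S^+$ (by (a)) the second factor is $V^{-1}\Delta_X(\Delta_X\alpha)=\rho^{-1}\Delta_X\alpha=(\rho^{-1}\Delta_X\rho)\,\rho^{-1}\alpha=\Delta_Y\beta$, while $X\rho=\rho Y$ restates (b). (The case $V=\Delta_S$, where the normal form is rewritten as $\Delta_S\,\Delta_{\tau_S(X)}$, occurs only for $\Delta_X\alpha=\Delta_S$, i.e.\ $\alpha=\partial_S(\Delta_X)=\rho$, $\beta=1$, and is checked by hand.)

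For (a) and (b) I would induct on the length $\ell(\alpha)$. The base case $\alpha=1$ forces $\rho=\partial_S(\Delta_X)\wedge\Delta_X=1$ and is trivial; and whenever $\rho=1$ the conclusion holds with $Y=X$, $\beta=\alpha$ (left-weightedness of $(\Delta_X)(\Delta_X\alpha)$ is exactly $\rho=1$). So assume $\rho\neq 1$, and let $t$ be its first letter. Two preliminary remarks: since $\rho\preccurlyeq\partial_S(\Delta_X)$ and $\partial_S(\Delta_X)\wedge\Delta_X=1$ (a common initial generator would lie in $X$ and make $\Delta_X$ non--square--free), we have $\rho\wedge\Delta_X=1$; in particular $t\notin X$, $\Delta_X t$ is simple, $X\subsetneq S$, and the ribbon $r_{X,t}=\Delta_X^{-1}\Delta_{X\cup\{t\}}$ is nontrivial with $t\preccurlyeq r_{X,t}$ (\autoref{L:ribbon_prefix}) and $\Delta_X r_{X,t}=\Delta_{X\cup\{t\}}=\Delta_X\vee t$ (\autoref{L:ribbon_lcm}); and since $\partial_S(\Delta_X\alpha)=\alpha^{-1}\partial_S(\Delta_X)$ is positive, $\alpha\preccurlyeq\partial_S(\Delta_X)$, so $\alpha$ and $\rho$ are both prefixes of the single simple element $\partial_S(\Delta_X)$. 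The plan for the step is: show $r_{X,t}\preccurlyeq\alpha$ and $r_{X,t}\preccurlyeq\rho$; put $X':=r_{X,t}^{-1}Xr_{X,t}$ and $\alpha':=r_{X,t}^{-1}\alpha$; by \autoref{L_ribbon_permutation} $X'\subseteq X\cup\{t\}\subseteq S$ and $r_{X,t}^{-1}\Delta_X r_{X,t}=\Delta_{X'}$, so $\Delta_{X'}(\Delta_{X'}\alpha')=r_{X,t}^{-1}\Delta_X(\Delta_X\alpha)$ is again of the required form with $\ell(\alpha')<\ell(\alpha)$; apply the induction hypothesis to $(X',\alpha')$ to get $Y=(\rho')^{-1}X'\rho'\subseteq S$ and $\rho'\preccurlyeq\alpha'$, where $\rho'=\partial_S(\Delta_{X'})\wedge(\Delta_{X'}\alpha')$; and finally reassemble, checking $r_{X,t}\rho'=\rho$ and that $(r_{X,t}\rho'\,\Delta_Y)(\Delta_Y\beta)$ with $\beta=\rho^{-1}\alpha$ is the left normal form of $\Delta_X(\Delta_X\alpha)$, which yields $\rho^{-1}X\rho=Y$ and $\rho\preccurlyeq\alpha$.

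The main obstacle is showing $r_{X,t}\preccurlyeq\rho$, equivalently that $t$ left-divides $V=\Delta_X\rho$ and not merely $\rho$: once $t\preccurlyeq V$ is known, the set $\{s\in S:s\preccurlyeq V\}$ contains $X\cup\{t\}$, its join is $\Delta_{X\cup\{t\}}$, hence $\Delta_{X\cup\{t\}}\preccurlyeq V$ and therefore $r_{X,t}=\Delta_X^{-1}\Delta_{X\cup\{t\}}\preccurlyeq\rho$. This is precisely where the hypothesis that $\Delta_X\alpha$ is simple must be used essentially — it is what keeps $\rho$ small and compatible with $\Delta_X$. I expect the cleanest route to $t\preccurlyeq V$, and to $r_{X,t}\preccurlyeq\alpha$ at the same time, is to apply \autoref{L:simple_plus_letter} repeatedly inside the simple element $\Delta_X\alpha=\Delta_X\cdot\alpha$: reading $\alpha$ from the left, the first of its letters that can be ``carried past $\Delta_X$'' to the front is $t$, and continuing to carry letters transports exactly the prefix $r_{X,t}$ of $\alpha$ to the front of $\Delta_X\alpha$, producing $r_{X,t}\preccurlyeq\alpha$, $r_{X,t}\preccurlyeq\rho$, and the identity $\Delta_X r_{X,t}=r_{X,t}\Delta_{X'}$ simultaneously. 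The remaining bookkeeping in the reassembly step — verifying both factors are simple and left-weighted, and passing from the normal form of $\Delta_{X'}(\Delta_{X'}\alpha')$ back to that of $\Delta_X(\Delta_X\alpha)$ — is the more routine part.
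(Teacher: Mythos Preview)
Your proposal is correct and follows the paper's approach: induct on $|\alpha|$, peel off a ribbon $r_{X,t}$, and recurse. The one organizational difference is that the paper does not pre-define $\rho$ and then verify its properties; instead it picks $t$ directly as any initial letter of $\Delta_X\alpha$ lying outside~$X$ (such a $t$ exists precisely when $(\Delta_X)(\Delta_X\alpha)$ is not already left-weighted), and from $t\preccurlyeq\Delta_X\alpha$ the single lcm identity $\Delta_X\vee t=\Delta_X r_{X,t}$ of \autoref{L:ribbon_lcm} gives $r_{X,t}\preccurlyeq\alpha$ in one line. Your ``main obstacle'' dissolves the same way: you already have $t\preccurlyeq\rho\preccurlyeq\Delta_X\alpha$, so $t$ \emph{is} an initial letter of $\Delta_X\alpha$ and the lcm argument applies directly --- there is no need for repeated applications of \autoref{L:simple_plus_letter}. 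Likewise, the separate check $r_{X,t}\preccurlyeq\rho$ is unnecessary: once the induction hypothesis gives the normal form $(\rho_1\Delta_Y)(\Delta_Y\beta)$ of $\Delta_T(\Delta_T\alpha_1)$, one verifies that $(r_{X,t}\rho_1\Delta_Y)(\Delta_Y\beta)$ is left-weighted (the first factor is $\Delta_X r_{X,t}\rho_1\preccurlyeq\Delta_X\alpha$, hence simple, and its set of final letters contains that of its suffix $\rho_1\Delta_Y$), and then $\rho=r_{X,t}\rho_1$ by uniqueness of the normal form.
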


\begin{proof}
We proceed by induction on the length $|\alpha|$ of $\alpha$ as a word over~$S$. If $|\alpha|=0$ then $\Delta_X (\Delta_X \alpha)= \Delta_X\Delta_X$, which is already in left normal form, and the result holds taking $Y=X$ and $\rho=\beta=1$. Suppose then that $|\alpha|>0$ and the result is true when $\alpha$ is shorter.

Let $Z\subseteq S$ be the set of initial letters of $\Delta_X \alpha$. That is, $Z=\{\sigma_i\in S : \sigma_i\preccurlyeq \Delta_X \alpha\}$. It is clear that $X\subseteq Z$, as every letter of $X$ is a prefix of~$\Delta_X$.

Notice that the set of final letters of $\Delta_X$ is precisely~$X$. Hence, if $Z=X$ the decomposition $\Delta_X(\Delta_X \alpha)$ is already in left normal form, so the result holds taking $X=Y$, $\rho=1$ and $\beta= \alpha$.

Suppose on the contrary that there exists some $t\in Z$ which is not in~$X$. Then $t\preccurlyeq \Delta_X \alpha$ so $\Delta_X r_{X,t} = t\vee \Delta _X\preccurlyeq \Delta_X\alpha$. Cancelling $\Delta_X$ from the left we have $r_{X,t}\preccurlyeq \alpha$, so $\alpha=r_{X,t}\alpha_1$.

We know that $X r_{X,t}= r_{X,t} T$ for some $T\subsetneq S$. Hence:
$$
\Delta_X (\Delta_X \alpha) = \Delta_X (\Delta_X r_{X,t}\alpha_1) = (r_{X,t}\Delta_T)(\Delta_T\alpha_1).
$$
Now consider the element $\Delta_T(\Delta_T\alpha_1)$. As $\alpha_1$ is shorter than $\alpha$, we can apply the induction hypothesis to obtain that $\alpha_1 = \rho_1 \beta$, where $T\rho_1 = \rho_1 Y$ for some $Y\subsetneq S$, and the left normal form of $\Delta_T(\Delta_T\alpha_1)$ is $(\rho_1 \Delta_Y)(\Delta_Y \beta)$.

We claim that the left normal form of $\Delta_X (\Delta_X \alpha)$ is $(\rho \Delta_Y)(\Delta_Y \beta)$, where $\rho =r_{X,t}\rho_1$. First, we have:
$$
  \Delta_X (\Delta_X \alpha) = \Delta_X (\Delta_X r_{X,t}\rho_1 \beta) = (r_{X,t} \Delta_T)(\Delta_T \rho_1 \beta)
$$
$$
  = (r_{X,t}\rho_1 \Delta_Y)(\Delta_Y \beta)= (\rho \Delta_Y)(\Delta_Y \beta).
$$
Note that $\rho \Delta_Y = \Delta_X \rho$ is simple, as it is a prefix of $\Delta_X \alpha$ which is simple. Finally, the set of final letters of $\rho \Delta_Y$ contains the set of final letters of its suffix $\rho_1 \Delta_Y$, which in turn contains the set of initial letters of $\Delta_Y \beta$ (as the product $(\rho_1 \Delta_Y)(\Delta_Y \beta)$ is in left normal form). So the claim holds.

Now it just remains to notice that $X \rho = X r_{X,t} \rho_1 = r_{X,t} T \rho_1 = r_{X,t} \rho_1 Y = \rho Y$, to finish the proof.
\end{proof}

\begin{lemma}~\label{L:Deltas_times_something}
Let $X\subseteq S$ be nonempty, and let $m>r>0$. Let $\alpha\in A_S^+$ be such that $\sup(\alpha)=r$, and let $x_1 x_2 \cdots x_{m+r}$ be the left normal form of $(\Delta_X)^m \alpha$ (where some of the first factors can be equal to~$\Delta_S$ and some of the last factors can be trivial). Then there is $Y\subseteq S$ and a decomposition $\alpha=\rho \beta$ in~$A_S^+$, such that $X \rho = \rho Y$ and:
\begin{enumerate}

  \item $x_1\cdots x_r= (\Delta_X)^r \rho = \rho \: (\Delta_Y)^r$.

  \item  $x_i=\Delta_Y$ for $i=r+1,\ldots,m-1$.

  \item $x_m\cdots x_{m+r}=\Delta_Y \beta$.

\end{enumerate}
\end{lemma}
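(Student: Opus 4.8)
The plan is to prove the statement by induction on the word length $|\alpha|$ of $\alpha$ over $S$, in the spirit of the previous lemma: at each stage one either recognises a base case in which the ribbon part $\rho$ is trivial, or peels off a single elementary ribbon $r_{X,t}$ from $\alpha$ and recurses. The case $X=S$ is immediate (then $r_{X,t}=1$ for all $t$, $\Delta_X^m\alpha=\Delta_S^m\alpha$ already has the required shape with $\rho=1$, $Y=X$, $\beta=\alpha$), so assume $X\subsetneq S$; put $w=\Delta_X^m\alpha$ and note $\Delta_X\preccurlyeq w$.

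The engine of the argument is: \emph{if $t\in S\setminus X$ and $t\preccurlyeq w$, then in fact $r_{X,t}\preccurlyeq\alpha$}. Indeed, \autoref{L:ribbon_lcm} gives $\Delta_X r_{X,t}=\Delta_X\vee t\preccurlyeq w=\Delta_X\cdot(\Delta_X^{m-1}\alpha)$, so cancelling one $\Delta_X$ yields $r_{X,t}\preccurlyeq\Delta_X^{m-1}\alpha$; and since $\Delta_X\vee r_{X,t}$ is again $\Delta_X r_{X,t}$ — it is the least common multiple of $X\cup\{t\}$, using \autoref{L:ribbon_prefix} — one may repeat the cancellation, absorbing each copy of $\Delta_X$ in turn, and end with $r_{X,t}\preccurlyeq\alpha$. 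This is precisely where the abundance of $\Delta_X$-factors is used (that $m>r=\sup(\alpha)$ is far more than needed). Having written $\alpha=r_{X,t}\alpha'$, \autoref{L_ribbon_permutation} gives $T:=r_{X,t}^{-1}Xr_{X,t}\subseteq X\cup\{t\}$ (so $T$ is nonempty and $T\subsetneq S$) with $\Delta_X r_{X,t}=r_{X,t}\Delta_T$, whence $w=r_{X,t}\,\Delta_T^m\alpha'$, with $|\alpha'|<|\alpha|$ and $\sup(\alpha')\leq\sup(\alpha)$ (the latter because $\alpha\preccurlyeq\Delta_S^r$ and $\Delta_S^r\preccurlyeq r_{X,t}\Delta_S^r$, as conjugation by $\Delta_S^{-r}$ keeps $r_{X,t}$ positive).

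For the base case one assumes that no $t\in S\setminus X$ divides $w$ on the left. Then $w\wedge\Delta_S=\Delta_X$: if instead $w\wedge\Delta_S=\Delta_X\rho$ with $\rho\neq1$, then $\Delta_X\rho$ is simple, so its left-divisor generators lie in $\{s\in S:s\preccurlyeq w\}=X$; no generator of $X$ can be an initial letter of $\rho$ (that would make $\Delta_X$ followed by that letter a non-simple prefix of the simple element $\Delta_X\rho$), so $\rho$ has an initial letter $t\in S\setminus X$, whence the engine applied to $\rho\preccurlyeq\Delta_X^{m-1}\alpha$ gives $r_{X,t}\preccurlyeq\alpha$ and hence $t\preccurlyeq r_{X,t}\preccurlyeq w$, contradicting the base-case hypothesis. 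The same reasoning shows $\{s\in S:s\preccurlyeq\Delta_X\alpha\}=X$, so that $(\Delta_X)^{m-1}$ followed by the left normal form of $\Delta_X\alpha$ satisfies all left-normal-form conditions — every generator of $X$ being a suffix of $\Delta_X$ gives $\partial_S(\Delta_X)\wedge\Delta_X=1$ and $\partial_S(\Delta_X)\wedge u=1$ whenever $\{s:s\preccurlyeq u\}\subseteq X$ — and this is the claimed form with $\rho=1$, $Y=X$, $\beta=\alpha$, after padding. For the inductive step, pick such a $t$, write $w=r_{X,t}\Delta_T^m\alpha'$ as above, apply the induction hypothesis to $\Delta_T^m\alpha'$ to get $\alpha'=\rho'\beta$ with $T\rho'=\rho'Y$ and its normal form, and put $\rho=r_{X,t}\rho'$: then $X\rho=r_{X,t}T\rho'=r_{X,t}\rho'Y=\rho Y$, $\alpha=\rho\beta$, and $\Delta_X^m\alpha=\rho\,\Delta_Y^m\beta$. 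Left-multiplying the normal form of $\Delta_T^m\alpha'$ by $r_{X,t}$ changes only its first factor, replacing $\rho'\Delta_Y$ by $r_{X,t}\rho'\Delta_Y=\Delta_{X\cup\{t\}}\rho'$; the junction with the second factor survives because every generator of $Y$ is a suffix of $\rho\Delta_Y$, all other junctions are inherited, and — \emph{provided this new first factor is simple} — one reads off exactly the claimed shape.

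The one genuinely delicate point, which I expect to be the main obstacle, is the simplicity of that new first factor $\Delta_{X\cup\{t\}}\rho'$, i.e. that it is a prefix of $\Delta_S$: the bound $\Delta_T\rho'\preccurlyeq\Delta_S$ provided by the recursion does not suffice, since $\partial_S(\Delta_{X\cup\{t\}})$ is only a \emph{suffix}, not a prefix, of $\partial_S(\Delta_T)$. I would resolve this by strengthening the induction hypothesis: unwinding the recursion, $\rho=r_{X,t_1}r_{X_1,t_2}\cdots r_{X_{k-1},t_k}$ is a chain of elementary ribbons, and one proves by an auxiliary induction along the chain that $\Delta_X\rho\preccurlyeq\Delta_{X\cup\{t_1,\dots,t_k\}}\preccurlyeq\Delta_S$, using the monotonicity $r_{A,t}\preccurlyeq r_{A',t}$ for $A\subseteq A'$ (itself an easy consequence of \autoref{L:ribbon_lcm} and \autoref{L:ribbon_prefix}). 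An alternative that avoids the reassembly step is to \emph{define} $\rho:=\Delta_X^{-1}(w\wedge\Delta_S)$ from the start (so that $\Delta_X\rho$ is simple by construction) and instead prove that this $\rho$ is a ribbon with $\rho\preccurlyeq\alpha$, again extracting both facts from the cancellation engine together with \autoref{L_ribbon_permutation}. Once simplicity of the head is secured, everything else is routine normal-form bookkeeping.
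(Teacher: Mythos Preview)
Your overall strategy---peel off an elementary ribbon $r_{X,t}$, recurse on $\Delta_T^m\alpha'$, and reattach---is different from the paper's, which instead reduces to the case $r=1$ via the domino rule and then invokes the preceding lemma (the $m=2$, $\alpha$ simple case) column by column. Your base case and the ``engine'' $t\preccurlyeq\Delta_X^m\alpha\Rightarrow r_{X,t}\preccurlyeq\alpha$ are correct, and the bookkeeping for the three conditions after a successful reattachment is fine. The gap is exactly where you flagged it, but it is worse than you suspect: the simplicity of $r_{X,t}x_1'$ can \emph{fail}, so the inductive step as written is not merely hard to justify but actually incorrect.

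A concrete failure occurs already in $B_3$ with $X=\{\sigma_1\}$, $t=\sigma_2$, and $\alpha=\sigma_2\sigma_1\sigma_1\sigma_2$ (so $r=\sup(\alpha)=2$; take $m=5$). Here $r_{X,t}=\sigma_2\sigma_1$, $T=\{\sigma_2\}$, $\alpha'=\sigma_1\sigma_2$, and one computes that the first normal-form factor of $\Delta_T^m\alpha'=\sigma_2^5\sigma_1\sigma_2$ is $x_1'=\Delta_S$; then $r_{X,t}x_1'=\sigma_2\sigma_1\Delta_S$ has length $5>|\Delta_S|$, hence is not simple. (What really happens is that $x_1=\Delta_S$ too, and the overflow propagates into $x_2$: left-multiplication by $r_{X,t}$ does \emph{not} change only the first factor.) Your Proposal~1 cannot rescue this: the claimed monotonicity $r_{A,t}\preccurlyeq r_{A',t}$ for $A\subseteq A'$ is false whenever $t\in A'\setminus A$ (then $r_{A',t}=1$ but $r_{A,t}\neq1$), and the ribbon chain produced by your recursion does revisit earlier generators---in the example above the chain is $r_{\{\sigma_1\},\sigma_2}\,r_{\{\sigma_2\},\sigma_1}=\sigma_2\sigma_1\cdot\sigma_1\sigma_2$, and $\Delta_X\rho=\sigma_1\sigma_2\sigma_1\sigma_1\sigma_2$ is not simple. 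Proposal~2 might be salvageable but would require substantial new work; as it stands your argument does not close. The paper's domino-rule approach sidesteps the issue entirely because it never needs a single ``head factor'' to absorb the whole ribbon $\rho$: it builds $\rho$ as $\rho_1\cdots\rho_r$ across $r$ separate rows, with each $x_i=\rho_i\Delta_{Y_i}$ guaranteed simple by the preceding lemma.
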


\begin{proof}
Suppose first that $\alpha$ is a simple element, so $r=1$.
By the domino rule \cite[Definition III\,1.57, Proposition V\,1.52]{DDGKM} (see also \cite[Lemma 1.32]{DG}), the left normal form of $(\Delta_X)^m \alpha$ is computed as follows:
\[
\xymatrix@C=12mm@R=12mm{
  \ar[r]^{\Delta_X} \ar[d]_{x_1}
& \ar[r]^{\Delta_X} \ar[d]_{y_2}
& \ar@{.>}[r]  \ar[d]_{y_3}
& \ar[r]^{\Delta_X} \ar[d]^{y_{m-2}}
& \ar[r]^{\Delta_X} \ar[d]^{y_{m-1}}
& \ar[r]^{\Delta_X} \ar[d]^{y_{m}}
& \ar[d]^{\alpha}
\\
  \ar[r]_{x_2}
& \ar[r]_{x_3}
& \ar@{.>}[r]
& \ar[r]_{x_{m-1}}
& \ar[r]_{x_{m}}
& \ar[r]_{x_{m+1}}
&
}
\]
where the $x_i$ and the $y_i$ are defined from right to left, in the following way: First, the left normal form of $\Delta_X \alpha$ is $y_{m}x_{m+1}$ (here $x_{m+1}$ could be trivial). Then the left normal form of $\Delta_X y_{m}$ is $y_{m-1}x_{m}$, and so on. Around each square, the down-right path represents the left normal form of the right-down path.

By construction, $\Delta_X \preccurlyeq y_m$. Hence, by the previous lemma, the left normal form of $\Delta_X y_m$ is ($\rho \Delta_Y)(\Delta_Y \beta_1)$, for some $Y\subseteq S$ and some~$\rho$ which conjugates~$X$ to~$Y$. Hence $y_{m-1}=\rho \Delta_Y = \Delta_X \rho$, and $x_{m}= \Delta_Y \beta_1$.

Now, if some $y_k=\Delta_X\rho$, as it is clear that the left normal form of $\Delta_X (\Delta_X \rho)$ is  $(\rho \Delta_Y)(\Delta_Y)$, it follows that $y_{k-1}=\rho\Delta_Y=\Delta_X\rho$ and $x_k=\Delta_Y$. Therefore, the above diagram is actually as follows:
\[
\xymatrix@C=12mm@R=12mm{
  \ar[r]^{\Delta_X} \ar[d]_{\rho \Delta_Y}
& \ar[r]^{\Delta_X} \ar[d]_{\rho\Delta_Y}
& \ar@{.>}[r]  \ar[d]_{\rho\Delta_Y}
& \ar[r]^{\Delta_X} \ar[d]^{\rho\Delta_Y}
& \ar[r]^{\Delta_X} \ar[d]^{\rho\Delta_Y}
& \ar[r]^{\Delta_X} \ar[d]^{y_m}
& \ar[d]^{\alpha}
\\
  \ar[r]_{\Delta_Y}
& \ar[r]_{\Delta_Y}
& \ar@{.>}[r]
& \ar[r]_{\Delta_Y}
& \ar[r]_{\Delta_Y \beta_1}
& \ar[r]_{x_{m+1}}
&
}
\]
As $\rho\Delta_Y\Delta_Y = \Delta_X\Delta_X\rho$, we have $\alpha= \rho \beta_1 x_{m+1}$. This shows the result for $r=1$.

The case $r>1$ follows from the above one and the domino rule. If $\alpha=\alpha_1\cdots \alpha_r$ in left normal form, the left normal form of $(\Delta_X)^m \alpha$ is computed by completing the squares in the diagram in \autoref{F:domino} (row by row, from right to left), where the down-right path is the left normal form of the right-down path, around each square.

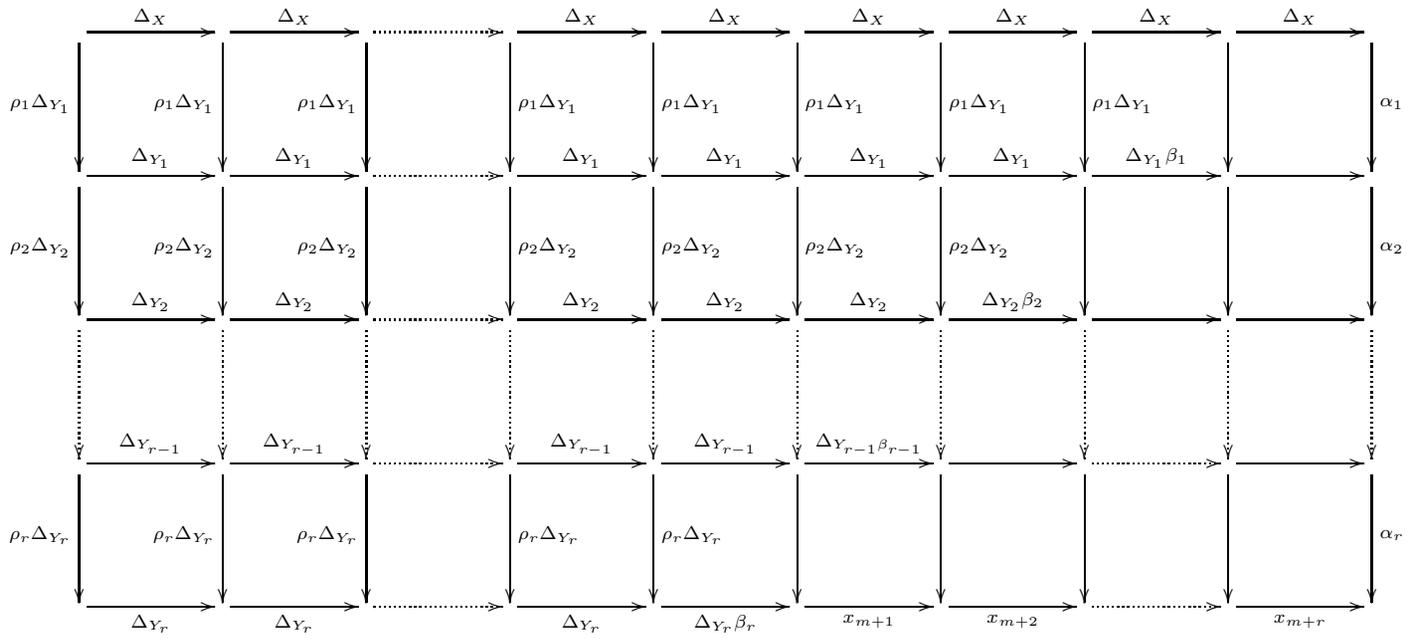
\begin{sidewaysfigure}
\[
\xymatrix@C=17mm@R=17mm{
  \ar[r]^{\Delta_X} \ar[d]_{\rho_1 \Delta_{Y_1}}
& \ar[r]^{\Delta_X} \ar[d]_{\rho_1\Delta_{Y_1}}
& \ar@{.>}[r]  \ar[d]_{\rho_1\Delta_{Y_1}}
& \ar[r]^{\Delta_X} \ar[d]^{\rho_1\Delta_{Y_1}}
& \ar[r]^{\Delta_X} \ar[d]^{\rho_1\Delta_{Y_1}}
& \ar[r]^{\Delta_X} \ar[d]^{\rho_1\Delta_{Y_1}}
& \ar[r]^{\Delta_X} \ar[d]^{\rho_1\Delta_{Y_1}}
& \ar[r]^{\Delta_X} \ar[d]^{\rho_1\Delta_{Y_1}}
& \ar[r]^{\Delta_X} \ar[d]
& \ar[d]^{\alpha_1}
\\
  \ar[r]^{\Delta_{Y_1}} \ar[d]_{\rho_2 \Delta_{Y_2}}
& \ar[r]^{\Delta_{Y_1}} \ar[d]_{\rho_2 \Delta_{Y_2}}
& \ar@{.>}[r] \ar[d]_{\rho_2\Delta_{Y_2}}
& \ar[r]^{\Delta_{Y_1}} \ar[d]^{\rho_2 \Delta_{Y_2}}
& \ar[r]^{\Delta_{Y_1}} \ar[d]^{\rho_2 \Delta_{Y_2}}
& \ar[r]^{\Delta_{Y_1}} \ar[d]^{\rho_2 \Delta_{Y_2}}
& \ar[r]^{\Delta_{Y_1}} \ar[d]^{\rho_2 \Delta_{Y_2}}
& \ar[r]^{\Delta_{Y_1} \beta_1} \ar[d]
& \ar[r] \ar[d]
& \ar[d]^{\alpha_2}
\\
  \ar[r]^{\Delta_{Y_2}} \ar@{.>}[d]
& \ar[r]^{\Delta_{Y_2}} \ar@{.>}[d]
& \ar@{.>}[r] \ar@{.>}[d]
& \ar[r]^{\Delta_{Y_2} } \ar@{.>}[d]
& \ar[r]^{\Delta_{Y_2} } \ar@{.>}[d]
& \ar[r]^{\Delta_{Y_2} } \ar@{.>}[d]
& \ar[r]^{\Delta_{Y_2} \beta_2} \ar@{.>}[d]
& \ar[r] \ar@{.>}[d]
& \ar[r] \ar@{.>}[d]
& \ar@{.>}[d]
\\
  \ar[r]^{\Delta_{Y_{r-1}}} \ar[d]_{\rho_r \Delta_{Y_r}}
& \ar[r]^{\Delta_{Y_{r-1}}} \ar[d]_{\rho_r \Delta_{Y_r}}
& \ar@{.>}[r] \ar[d]_{\rho_r\Delta_{Y_r}}
& \ar[r]^{\Delta_{Y_{r-1}}} \ar[d]^{\rho_r \Delta_{Y_r}}
& \ar[r]^{\Delta_{Y_{r-1}}} \ar[d]^{\rho_r \Delta_{Y_r}}
& \ar[r]^{\Delta_{Y_{r-1}\beta_{r-1}}} \ar[d]
& \ar[r] \ar[d]
& \ar@{.>}[r] \ar[d]
& \ar[r] \ar[d]
& \ar[d]^{\alpha_r}
\\
  \ar[r]_{\Delta_{Y_r}}
& \ar[r]_{\Delta_{Y_r}}
& \ar@{.>}[r]
& \ar[r]_{\Delta_{Y_r}}
& \ar[r]_{\Delta_{Y_r}\beta_r}
& \ar[r]_{x_{m+1}}
& \ar[r]_{x_{m+2}}
& \ar@{.>}[r]
& \ar[r]_{x_{m+r}}
&
}
\]
\caption{Computing the left normal form of $(\Delta_X)^m \alpha$ in the proof of \autoref{L:Deltas_times_something}.}
\label{F:domino}
\end{sidewaysfigure}

By construction, the subsets $X=Y_0,Y_1,\cdots, Y_r=Y$ of~$S$ and the elements $\rho_1,\ldots,\rho_r$ satisfy $Y_{i-1}\rho_i=\rho_i Y_i$.  This implies that the first $r$ factors in the normal form of $(\Delta_X)^m \alpha$ are:
$$
  (\rho_1 \Delta_{Y_1})(\rho_2 \Delta_{Y_2})\cdots (\rho_r \Delta_{Y_r})= (\Delta_X)^r \rho = \rho (\Delta_Y)^r,
$$
where $\rho=\rho_1\cdots \rho_r$. Moreover, $x_i=\Delta_Y$ for $i=r+1,\ldots,m-1$, as we can see in the diagram. And finally we see that  $x_m= \Delta_Y \beta_r$ and
$$
  (\Delta_X)^m \alpha = \rho (\Delta_Y)^{m-1} x_m \cdots x_{m+r} = (\Delta_X)^{m-1} \rho x_m \cdots x_{m+r},
$$
whence $\rho x_m \cdots x_{m+r} = \Delta_X \alpha = \Delta_X \rho \beta = \rho \Delta_Y \beta$
for $\beta = \beta_r x_{m+1}\cdots x_{m+r}$.  In particular, $x_m\cdots x_{m+r} = \Delta_Y \beta$, as we wanted to show.
\end{proof}

Therefore, if we multiply a big power $(\Delta_X)^m$ by some element $\alpha$ which is a product of $r$ simple factors, the normal form of the result still has $m-r-1$ factors of the form $\Delta_Y$ for some~$Y$.

In the forthcoming result, we will need a special procedure to compare elements in~$A_S$. For that purpose, we introduce the following:
\begin{definition}
For every element $\gamma\in A_S$ we will define an integer $\varphi(\gamma)$ as follows: Conjugate $\gamma$ to $\gamma'\in RSSS_\infty(\gamma)$. Let $U=\mbox{supp}(\gamma')$. Then let $\varphi(\gamma)=|\Delta_U|$, the length of the element $\Delta_U$ as a word in the standard generators.
\end{definition}

\begin{proposition}\label{P:varphi(n)}
The integer $\varphi(\gamma)$ is well defined. Moreover, if $\gamma$ is conjugate to a positive element, then $\varphi(\gamma)=|\Delta_X|$, where $X=\mbox{supp}(\beta)$ for any positive element $\beta$ conjugate to~$\gamma$.
\end{proposition}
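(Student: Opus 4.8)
The plan is to derive both statements from \autoref{P:minimalContainingParabolic} (the parabolic closure $P_\gamma$ does not depend on the chosen element of $RSSS_\infty(\gamma)$), from \autoref{C:conjugators_of_positive_elements}, and from the following elementary observation, which I would establish first: \emph{if $Z,U\subseteq S$ are such that $A_Z$ and $A_U$ are conjugate in $A_S$, then $|\Delta_Z|=|\Delta_U|$.} To prove it, take $g$ with $g^{-1}A_Z g = A_U$; since $z_Z$ is central in $A_Z$, the result of Godelle recalled in the proof of \autoref{L:commuting cP_cQ} (\cite{God}) applies to $g^{-1}z_Z g\in A_U$ and yields a factorization $g=vy$ with $y\in A_U$ and $v^{-1}Zv=U$. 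Conjugation by $v$ then restricts to a monoid isomorphism $A_Z^+\to A_U^+$ carrying the generating set $Z$ bijectively onto $U$, so it preserves the prefix order and hence takes $\Delta_Z=\bigvee_{z\in Z}z$ to $\Delta_U=\bigvee_{u\in U}u$, i.e.\ $v^{-1}\Delta_Z v=\Delta_U$. As the relations of $A_S^+$ are homogeneous, word length in the generators $S$ is a well-defined grading of $A_S^+$, it is computed on an element of $A_Z^+$ (resp.\ $A_U^+$) by a word using only letters of $Z$ (resp.\ $U$), and conjugation by $v$ turns a $Z$-word of length $k$ into a $U$-word of length $k$; therefore $|\Delta_Z|=|v^{-1}\Delta_Z v|=|\Delta_U|$.

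Well-definedness of $\varphi(\gamma)$ would then follow immediately: $RSSS_\infty(\gamma)$ is nonempty by \autoref{P:I_infty_is_nonempty}, and if $\gamma',\gamma''\in RSSS_\infty(\gamma)$ have supports $Z$ and $U$, then by \autoref{P:minimalContainingParabolic} the standard parabolic subgroups $A_Z$ and $A_U$ are both conjugate to $P_\gamma$, hence conjugate to each other, so $|\Delta_Z|=|\Delta_U|$ by the observation. Thus $\varphi(\gamma)=|\Delta_Z|$ is independent of all choices.

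For the ``moreover'' part I would split into cases. If $\gamma$ lies in no proper parabolic subgroup, then no conjugate of $\gamma$ does; since any element $\delta$ lies in $A_{\mbox{supp}(\delta)}$, this forces $\mbox{supp}(\delta)=S$ for every conjugate $\delta$ of $\gamma$, so $U=\mbox{supp}(\gamma')=S=\mbox{supp}(\beta)=X$ and $\varphi(\gamma)=|\Delta_S|=|\Delta_X|$; the case $\gamma=1$ is trivial. Otherwise $\gamma$ is a nontrivial element of a proper parabolic subgroup that is conjugate to a positive element, so all elements of $SSS(\gamma)$ are positive (as recalled in the proof of \autoref{P:minimalContainingParabolic}); in particular $\gamma'\in RSSS_\infty(\gamma)\subseteq RSSS(\gamma)\subseteq SSS(\gamma)$ is itself a positive conjugate of $\gamma$. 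By \autoref{C:conjugators_of_positive_elements}, any two positive conjugates of $\gamma$, with supports $X_1$ and $X_2$, have $A_{X_1}$ conjugate to $A_{X_2}$, hence $|\Delta_{X_1}|=|\Delta_{X_2}|$ by the observation above. Applying this to $\beta$ (support $X$) and to $\gamma'$ (support $U$) gives $\varphi(\gamma)=|\Delta_U|=|\Delta_X|$, and at the same time shows that $|\Delta_X|$ is the same for every positive conjugate $\beta$ of $\gamma$, as the statement asserts.

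The only non-formal step, and the one I expect to require care, is the preliminary observation — more precisely the implication ``$v^{-1}Zv=U$'' $\Rightarrow$ ``$|\Delta_Z|=|\Delta_U|$''. Word length is not a conjugacy invariant in general; it behaves well here precisely because $v$ conjugates one set of standard generators bijectively onto another, so that conjugation by $v$ is a graded monoid isomorphism between the two Artin monoids carrying $\Delta_Z$ to $\Delta_U$. Once this is in place, the rest is a direct assembly of \autoref{P:minimalContainingParabolic}, \autoref{C:conjugators_of_positive_elements} and \autoref{P:I_infty_is_nonempty}.
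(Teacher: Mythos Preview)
Your proof is correct and follows essentially the same strategy as the paper's: both reduce well-definedness to the fact that if $\gamma',\gamma''\in RSSS_\infty(\gamma)$ then $A_{\mathrm{supp}(\gamma')}$ and $A_{\mathrm{supp}(\gamma'')}$ are conjugate (this is exactly the content of \autoref{P:minimalContainingParabolic}), and then deduce $|\Delta_U|=|\Delta_V|$. The only real difference is in how that last equality is justified. The paper argues via $x^{-1}z_Ux=z_V$, hence $|z_U|=|z_V|$ by homogeneity, and then observes that the exponent $e\in\{1,2\}$ with $z_U=\Delta_U^e$, $z_V=\Delta_V^e$ is the same for conjugate parabolics. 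You instead isolate the cleaner observation that conjugacy of $A_Z$ and $A_U$ forces (via Godelle's ribbon result) the existence of $v$ with $v^{-1}Zv=U$, whence $v^{-1}\Delta_Zv=\Delta_U$ and $|\Delta_Z|=|\Delta_U|$ directly; this sidesteps the ``same $e$'' step entirely. For the positive case the paper simply notes $P_\beta=A_X$ and reuses the same argument, while you go through \autoref{C:conjugators_of_positive_elements} with an explicit case split on whether $P_\gamma=A_S$; the two are equivalent, since \autoref{P:P_beta_well defined_positive case} is itself derived from \autoref{C:conjugators_of_positive_elements}.
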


\begin{proof}
Suppose that $\gamma', \gamma''\in RSSS_\infty(\gamma)$, and let $U=\mbox{supp}(\gamma')$ and $V=\mbox{supp}(\gamma'')$. Then $P_{\gamma'}=A_U$ and $P_{\gamma''}=A_V$, and every element $x$ conjugating $\gamma'$ to $\gamma''$ must also conjugate $A_U$ to~$A_V$. Hence $x^{-1}z_U x = z_V$, by~\autoref{L:parabolic=central_element}.

This implies that $|z_U|=|z_V|$. Moreover, as $A_U$ is conjugate to~$A_V$, we have $z_U=\Delta_U^e$ and $z_V=\Delta_V^e$ for the same $e\in \{1,2\}$. Therefore $|\Delta_U|=|\Delta_V|$, which proves that $\varphi(\gamma)$ is well defined.

On the other hand, if $\gamma$ is conjugate to a positive element $\beta$, and $X=\mbox{supp}(\beta)$, then $P_\beta=A_X$. We can then apply the same argument as above to $\gamma'$ and~$\beta$, to obtain that $\varphi(\gamma)=|\Delta_X|$.
\end{proof}

We can finally show one of the main results in this paper:

\begin{theorem}\label{T:intersection_of_parabolic_is_parabolic}
Let $P$ and $Q$ be two parabolic subgroups of an Artin--Tits group $A_S$ of spherical type. Then $P\cap Q$ is also a parabolic subgroup.
\end{theorem}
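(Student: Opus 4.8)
The strategy is to realise $P\cap Q$ itself as a parabolic closure $P_\alpha$. If $P\cap Q=\{1\}$ there is nothing to prove (it is the standard parabolic $A_\emptyset$), so assume it is nontrivial and consider the family $\mathcal R=\{P_\beta:\beta\in P\cap Q\}$. Since $P$ and $Q$ are parabolic subgroups containing $\beta$, minimality of $P_\beta$ gives $P_\beta\subseteq P\cap Q$; thus every member of $\mathcal R$ is contained in $P\cap Q$, and I want to show that $\mathcal R$ has a maximum element $M$. Granting this, every $\beta\in P\cap Q$ satisfies $\beta\in P_\beta\subseteq M\subseteq P\cap Q$, so $P\cap Q=M$ is a parabolic subgroup.

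To produce the maximum: a strictly ascending chain of parabolic subgroups inside $P\cap Q$ would force a strictly increasing sequence of $\varphi$-values (\autoref{P:varphi(n)}), which is impossible, so $\mathcal R$ has maximal elements; it then suffices to show that $\mathcal R$ is \emph{directed}, since then the maximal element is unique and is a maximum. So, given $\beta_1,\beta_2\in P\cap Q$, I need some $\beta_3\in P\cap Q$ with $P_{\beta_1},P_{\beta_2}\subseteq P_{\beta_3}$. After replacing each $\beta_i$ by $z_{P_{\beta_i}}$ (which lies in $P_{\beta_i}\subseteq P\cap Q$, has the same parabolic closure, and is conjugate to a positive element), and conjugating the whole configuration, I may assume that $P_{\beta_1}=A_X$ is a standard parabolic subgroup, so that $\Delta_X\in A_X\subseteq P\cap Q$. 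The candidate is
$$\beta_3 \;=\; \Delta_X^{\,n}\,\beta_2 \;\in\; P\cap Q, \qquad n\gg 0 .$$
The key claim is that $A_X\subseteq P_{\beta_3}$ as soon as $n$ is large enough. Note that this suffices: $A_X\subseteq P_{\beta_3}$ is equivalent to $\Delta_X\in P_{\beta_3}$ (because $P_{\Delta_X}=A_X$ is the minimal parabolic subgroup containing $\Delta_X$), and once $A_X\subseteq P_{\beta_3}$ we also get $\beta_2=\Delta_X^{-n}\beta_3\in P_{\beta_3}$, hence $P_{\beta_2}\subseteq P_{\beta_3}$.

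The heart of the argument — and the step I expect to be the main obstacle — is therefore to prove that $\Delta_X\in P_{\Delta_X^{\,n}\beta_2}$ for $n$ large. This is exactly what the normal-form results of this section are designed for: after a preliminary reduction (clearing the negative part of $\beta_2$ by a central power of $\Delta_S$ and passing to a Garside structure $(A_S,A_S^+,\Delta_S^N)$ in which the relevant element becomes positive), \autoref{L:Deltas_times_something} shows that a conjugate of $\Delta_X^{\,n}\beta_2$ has left normal form consisting — apart from a bounded number of outer factors — of a long block of factors equal to $\Delta_Y$, where $Y$ is conjugate to $X$ via an explicit element $\rho$ with $X\rho=\rho Y$. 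One then feeds this into the positive-conjugate machinery: iterated (twisted) cycling and decycling keep one inside standard parabolic subgroups by \autoref{L:cycling_parabolic} and \autoref{L:decycling_parabolic}, and supports are preserved along the graph $\mathcal G_{C^+}$ by \autoref{C:conjugators_of_positive_elements}, so one can track $\rho$ through to an $RSSS_\infty$-representative whose support contains a conjugate of $X$ that matches $A_X$ on the nose, forcing $\Delta_X\in P_{\Delta_X^{\,n}\beta_2}$. The delicate points will be the uniform (in $n$) control of the cancellation between $\Delta_X^{\,n}$ and $\beta_2$, and keeping all the conjugations consistent, so that it is genuinely $A_X$, and not merely some conjugate of it, that ends up inside $P_{\beta_3}$.
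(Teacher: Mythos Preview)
Your overall strategy --- realise $P\cap Q$ as a parabolic closure $P_\alpha$ by finding a maximum in $\mathcal R=\{P_\beta:\beta\in P\cap Q\}$ --- is exactly the paper's strategy, and your candidate element $\Delta_X^{\,n}\beta_2$ is essentially the paper's $\beta_m=z_T\,\Delta_Z^{m}$.  The decisive difference is \emph{where you put the maximality hypothesis}.  You try to prove that $\mathcal R$ is directed, i.e.\ that $A_X\subseteq P_{\Delta_X^{n}\beta_2}$ for an \emph{arbitrary} pair $\beta_1,\beta_2$, whereas the paper first fixes $\alpha$ with $\varphi(\alpha)=|\Delta_Z|$ maximal among all elements of $P\cap Q$ and then runs the analogous computation with that specific~$Z$.

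This matters because the first hard step --- showing that $\Delta_X^{\,n}\beta_2$ is conjugate to a positive element for large~$n$ --- is proved in the paper by a length-counting argument (the ``Claim'') that collapses without maximality.  The argument bounds the length of every simple factor of an $RSSS_\infty$-representative $\widetilde\beta_m$ by $\varphi(\beta_m)\le |\Delta_Z|$ and compares with the exponent sum $|\Delta_Z|\,m+\text{const}$; this forces a factor equal to $\Delta_{U_m}$, hence positivity.  If $|\Delta_X|$ is not maximal in $P\cap Q$, then $\varphi(\beta_3)$ can exceed $|\Delta_X|$ and the inequality goes the wrong way.  Your proposed fix, ``clearing the negative part of $\beta_2$ by a central power of~$\Delta_S$'', does not help: a nontrivial central power of~$\Delta_S$ need not lie in $P\cap Q$, and multiplying by it sends the parabolic closure to all of~$A_S$, destroying exactly the information you need.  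So as written you have no route to a positive conjugate, and without one \autoref{L:Deltas_times_something} and the support-tracking via \autoref{C:conjugators_of_positive_elements} never get off the ground.

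It is worth noting that \emph{if} you had positivity, your weaker target $A_X\subseteq P_{\beta_3}$ actually does follow without the second use of maximality: writing $\widehat\beta=B\,\Delta_{Y}\,d$ with $c=\rho d$ and $X\rho=\rho Y$ as in the paper, the positivity of $\widehat\beta$ forces $d\in A_U^+$ where $U=\mathrm{supp}(\widehat\beta)\supseteq Y$, whence $P_{\beta_3}=c\,A_U\,c^{-1}=\rho\,A_U\,\rho^{-1}\supseteq\rho\,A_Y\,\rho^{-1}=A_X$.  So the only genuine gap is positivity, and the clean way to obtain it is to drop the directedness detour and, as the paper does, start from an $\alpha$ with $\varphi(\alpha)$ maximal.
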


\begin{proof}
If either $P$ or $Q$ is equal to $A_S$ or to $\{1\}$, the result is trivially true. Hence we can assume that both subgroups are proper parabolic subgroups.

If $P\cap Q=\{1\}$ the result holds. Hence we will assume that there exists some non-trivial element $\alpha\in P\cap Q$. We take $\alpha$ such that $\varphi(\alpha)$ is maximal (notice that $\varphi(\alpha)$ is bounded above by~$|\Delta_S|$).

Let $P_\alpha$ be the parabolic closure of~$\alpha$. By~\autoref{T:minimal_parabolic_containing_an_element}, we know that $P_\alpha\subseteq P$, and also $P_\alpha\subseteq Q$, so $P_\alpha\subseteq P\cap Q$. Moreover, up to conjugating $P_\alpha$, $P$ and $Q$ by the same suitable element, we can assume that $P_\alpha$ is standard, so $P_\alpha=A_Z\subseteq P\cap Q$ for some $Z\subsetneq S$. Notice that $\Delta_Z\in P_\alpha\subseteq P\cap Q$.

We will show that $P\cap Q=P_\alpha$, that is, $P\cap Q=A_Z$.

Take any element $w\in P\cap Q$. In order to show that $w\in A_Z$, we will consider its parabolic closure~$P_w$, which we will denote by~$T$.  By the above arguments, we have $T\subseteq P\cap Q$ and, in particular, $z_T\in P\cap Q$. Notice that $T$ is conjugate to $A_X$ for some $X\subsetneq S$, hence $z_T$ is conjugate (by the same conjugating element) to the positive element~$z_X$. Since the support of $z_X$ is~$X$, it follows that $P_{z_X}=A_X$, and conjugating back we have $P_{z_T}=T$. Therefore, if we show that $z_T\in A_Z$, this will imply that $T\subseteq A_Z$ and then $w\in A_Z$, as desired.

We then need to show that $z_T\in A_Z$.
Let $a^{-1}b$ be the $np$-normal form of~$z_T$.
We will now construct an infinite family of elements in $P\cap Q$, using $z_T$ and~$\Delta_Z$.
For every $m>0$, consider $\beta_m= z_T (\Delta_Z)^m = a^{-1}b (\Delta_Z)^m$. By construction $\beta_m\in P\cap Q$ for every $m>0$.

Suppose that $a=a_1\cdots a_r$ and $b=b_1\cdots b_s$ are the left normal forms of~$a$ and~$b$, respectively. Then
$$
 \beta_m=  a_r^{-1}\cdots a_1^{-1} b_1\cdots b_s (\Delta_Z)^m.
$$
The $np$-normal form of~$\beta_m$ is computed by making all possible cancellations in the middle of the above expression. As all the involved factors are simple elements, it follows that $\inf(\beta_m)\geq -r$ and $\sup(\beta_m)\leq s+m$.

Recall that $\varphi(\alpha)=|\Delta_Z|$ is maximal among the elements in $P\cap Q$. Let us denote $n=\varphi(\alpha)=|\Delta_Z|$. Now, for every $m>0$, choose some $\widetilde\beta_m \in RSSS_\infty(\beta_m)$.

{\bf Claim:} There is $M>0$ such that $\widetilde\beta_m$ is positive for all $m>M$.

Let $U_m=\mbox{supp}(\widetilde\beta_m)$. We know by maximality of $\varphi(\alpha)$ that $|\Delta_{U_m}|\leq n$. So the length of each simple element in the normal form of~$\widetilde \beta_m$ must be at most~$n$.

Let $x_m^{-1}y_m$ be the $np$-normal form of $\widetilde \beta_m$. As $\widetilde \beta_m\in RSSS_\infty(\beta_m)\subseteq SSS(\beta_m)$, it follows that~$x_m$ is a positive element formed by at most $r$ simple elements, and~$y_m$ is a positive element formed by at most $s+m$ simple elements.

Given some $m>0$, suppose that none of the factors in the left normal form of $y_m$ is equal to $\Delta_{U_m}$. This means that the length of each factor of $y_m$ is at most $n-1$, so $|y_m|\leq (n-1)(s+m)$, that is, $|y_m|\leq (n-1)m+k$ where $k$ is a constant independent of $m$. Hence the exponent sum of $\widetilde\beta_m$ as a product of standard generators and their inverses is $s(\widetilde \beta_m)= |y_m|-|x_m| \leq |y_m| \leq (n-1)m+k$.

But this exponent sum is invariant under conjugation, hence $s(\widetilde \beta_m)=s(\beta_m)=|b|-|a|+nm$. That is, $s(\widetilde \beta_m)=nm+K$ for some constant~$K$ independent of~$m$. We then have $nm+K\leq (n-1)m+k$, that is, $m\leq k-K$.

If we denote $M=k-K$, it follows that for every $m>M$, the left normal form of $y_m$ has some factor equal to $\Delta_{U_m}$. Recall that $U_m=\mbox{supp}(\widetilde \beta_m)$, so $\widetilde \beta_m\in A_{U_m}$. This means that the left normal form of $y_m$ starts with~$\Delta_{U_m}$. But there cannot be cancellations between $x_m^{-1}$ and $y_m$, so this implies that $x_m=1$. Therefore, $\widetilde \beta_m$ is positive for all $m>M$. This proves the claim.
\medskip

We then know that, if $m>M$, the element $\beta_m$ is conjugate to a positive element. The good news is that one can conjugate $\beta_m$ to a positive element $\widehat \beta_m$, using a conjugating element $c_m$ whose length is bounded by a constant independent of~$m$.  Indeed, one just needs to apply iterated cycling to~$\beta_m$ until its infimum becomes non-negative. Since $\inf(\beta_m)\geq -r$, one just needs to increase the infimum at most $r$ times, and we know from~\cite{BKL2} that this can be done with at most $r|\Delta_S|-r$ cyclings. Hence, we can take $\sup(c_m)\leq r|\Delta_S|-r$, say $\sup(c_m)\leq N$, a number which is independent of~$m$.

We then have $\widehat \beta_m= c_m^{-1}\beta_m c_m=c_m^{-1}a^{-1} b (\Delta_Z)^m c_m \in A_S^+$. We will now try to describe the support of the positive element $\widehat\beta_m$.

Consider the element $(\Delta_Z)^m c_m$. By~\autoref{L:Deltas_times_something}, if $m$ is big enough we can decompose $c_m=\rho_m d_m$ so that $\Delta_Z\rho_m = \rho_m \Delta_{Y_m}$ (actually $Z \rho_m = \rho_m Y_m$) for some $Y_m\subsetneq S$, and the left normal form of $(\Delta_Z)^m c_m$ finishes with $m-N-1$ copies of $\Delta_{Y_m}$ followed by some factors whose product equals $\Delta_{Y_m} d_m$. Notice that making $m$ big enough, we can have as many copies of $\Delta_{Y_m}$ as desired.

The negative part of $\widehat\beta_m$ as it is written above is $c_m^{-1}a^{-1}$, which is the product of at most $N+r$ inverses of simple factors. Since $\widehat \beta_m$ is positive, this negative part must cancel completely with the positive part. Namely, it cancels with the first $N+r$ simple factors in the left normal form of $b (\Delta_Z)^m c_m$.

But the first $N+r$ simple factors of $b (\Delta_Z)^m c_m$ are a prefix of~$b$ multiplied by the first $N+r$ factors of $(\Delta_Z)^m c_m$. Recall that we can take $m$ big enough so that $(\Delta_Z)^m c_m$ has as many copies of $\Delta_{Y_m}$ as desired. Hence, for $m$ big enough we can decompose $b(\Delta_Z)^m c_m=A \Delta_{Y_m}d_m$, where $A$ is a positive element containing enough simple factors to absorb $c_m^{-1}a^{-1}$ completely. That is, $A= a c_m B$ for some positive element~$B$.  It follows that $\widehat \beta_m = B \Delta_{Y_m} d_m$.

Now recall that $\rho_m^{-1} \Delta_Z \rho_m =\Delta_{Y_m}$, hence $|\Delta_{Y_m}|=n$. On the other hand, since $\widehat\beta_m$ is positive, its support determines $\varphi(\beta_m)$. Hence, if $U=\mbox{supp}(\widehat \beta_m)$, we have $|\Delta_{U}|\leq n$. As $Y_m\subseteq U$, it follows that $n=|\Delta_{Y_m}|\leq |\Delta_U|\leq n$, so $|\Delta_{Y_m}|= |\Delta_U|=n$ and then $Y_m=U=\mbox{supp}(\widehat \beta_m)$. This implies in particular that $d_m\in A_{Y_m}$.

But now recall that $\widehat \beta_m= c_m^{-1}\beta_m c_m$, where $\widehat \beta_m$ is positive. Hence the minimal parabolic subgroup containing $\beta_m$ is $c_m A_{Y_m} c_m^{-1} = \rho_m d_m  A_{Y_m} d_m^{-1}\rho_m^{-1} = \rho_m A_{Y_m} \rho_m^{-1} = A_Z$.

Therefore, $\beta_m\in A_Z$ for some $m$ big enough. Since $\beta_m= z_T (\Delta_Z)^m$, it follows that $z_T\in A_Z$, as we wanted to show.
\end{proof}

\section{The lattice of parabolic subgroups}

In this secion we will see an interesting simple consequence of the main result in the previous section: The set of parabolic subgroups forms a lattice for inclusion.

\begin{proposition}\label{P:closedPredicate}
Let $A_S$ be an Artin--Tits group of spherical type and let~$\mathcal P$ be the set of parabolic subgroups of~$A_S$.
If~$\pi$ is a predicate on~$\mathcal P$ such that the conjunction of~$\pi(P)$ and~$\pi(Q)$ implies $\pi(P\cap Q)$ for any $P,Q\in \mathcal P$, then the set~$\mathcal P_\pi = \{ P \in\mathcal P : \pi(P) \}$, if nonempty, contains a unique minimal element with respect to inclusion, namely
$$
   \bigcap_{P\in \mathcal P_\pi}{P} .
$$
\end{proposition}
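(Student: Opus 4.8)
The plan is to derive the statement from \autoref{T:intersection_of_parabolic_is_parabolic} together with the fact that the poset of parabolic subgroups of $A_S$, ordered by inclusion, satisfies the descending chain condition. Once these two inputs are available the conclusion follows by a purely order-theoretic argument, so I expect the real work to be concentrated in establishing the chain condition.

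First I would observe that $\mathcal P_\pi$ is stable under pairwise intersection: for $P,Q\in\mathcal P_\pi$ we have $P\cap Q\in\mathcal P$ by \autoref{T:intersection_of_parabolic_is_parabolic}, and $\pi(P\cap Q)$ holds by the hypothesis on~$\pi$; iterating, every intersection of finitely many members of $\mathcal P_\pi$ again lies in $\mathcal P_\pi$. Next I would equip $\mathcal P$ with a ``rank''. For $P\in\mathcal P$ conjugate to a standard parabolic $A_X$, set $\nu(P)=|\Delta_X|$. This is well defined because $\nu(P)=\varphi(z_P)$: indeed $z_P$ is conjugate to the positive element $z_X$, whose support is $X$, so \autoref{P:varphi(n)} gives $\varphi(z_P)=|\Delta_X|$, and $\varphi$ is well defined. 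Clearly $0\le\nu(P)\le|\Delta_S|$.

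The crucial point is that $\nu$ is strictly monotone under proper inclusion. Given parabolic subgroups $Q\subsetneq P$, conjugate everything so that $P=A_X$ is standard. Then $Q$ is a parabolic subgroup of $A_S$ contained in $A_X$, hence a parabolic subgroup of $A_X$ (a parabolic subgroup contained in a standard parabolic $A_X$ is parabolic in $A_X$; see \cite{God}), and it is proper in $A_X$ since $Q\subsetneq A_X$; so $Q$ is conjugate inside $A_X$ to $A_W$ for some $W\subsetneq X$. Consequently $z_Q$ is conjugate (inside $A_X$, hence in $A_S$) to the positive element $z_W\in A_W^+$ with $\operatorname{supp}(z_W)=W$, whence $\nu(Q)=\varphi(z_Q)=|\Delta_W|$ by \autoref{P:varphi(n)}. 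Since $W\subsetneq X$ we have $\Delta_W\preccurlyeq\Delta_X$ with $\operatorname{supp}(\Delta_W)=W\ne X=\operatorname{supp}(\Delta_X)$, so $\Delta_W\ne\Delta_X$ and therefore $\nu(Q)=|\Delta_W|<|\Delta_X|=\nu(P)$. As $\nu$ takes only finitely many values, there is no infinite strictly descending chain in $(\mathcal P,\subseteq)$, and so every nonempty subfamily of $\mathcal P$ contains a minimal element (any one of minimal $\nu$).

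Finally, assuming $\mathcal P_\pi\ne\emptyset$, I would pick a minimal element $P_0\in\mathcal P_\pi$. For an arbitrary $P\in\mathcal P_\pi$ we have $P_0\cap P\in\mathcal P_\pi$ and $P_0\cap P\subseteq P_0$, so minimality of $P_0$ forces $P_0\cap P=P_0$, i.e.\ $P_0\subseteq P$. Hence $P_0\subseteq\bigcap_{P\in\mathcal P_\pi}P$, while the reverse inclusion is immediate because $P_0\in\mathcal P_\pi$; therefore $\bigcap_{P\in\mathcal P_\pi}P=P_0\in\mathcal P_\pi$, and being contained in every member of $\mathcal P_\pi$ it is the unique minimal — indeed the least — element. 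The one genuinely delicate ingredient in this scheme is the strict monotonicity of $\nu$, equivalently the descending chain condition for parabolic subgroups (which rests on the cited fact that a parabolic subgroup contained in a standard parabolic is parabolic in it); everything else is routine bookkeeping.
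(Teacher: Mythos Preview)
Your proof is correct and follows the same overall strategy as the paper: closure of $\mathcal P_\pi$ under finite intersection (via \autoref{T:intersection_of_parabolic_is_parabolic}) together with the descending chain condition on parabolic subgroups. Two minor differences are worth noting. First, your rank function $\nu(P)=|\Delta_X|$ is more elaborate than necessary; the paper simply uses $|X|$, observing that if $\alpha^{-1}A_X\alpha\subsetneq\beta^{-1}A_Y\beta$ then $|X|<|Y|$, so any descending chain has length at most $|S|+1$ (this rests on the same fact you cite from \cite{God}, so the content is identical, but the bookkeeping is lighter and avoids the detour through $\varphi$ and \autoref{P:varphi(n)}). Second, your final step---pick a minimal $P_0\in\mathcal P_\pi$ and intersect it with an arbitrary $P\in\mathcal P_\pi$ to force $P_0\subseteq P$---is arguably cleaner than the paper's, which first argues that $\mathcal P$ is countable, enumerates $\mathcal P_\pi=\{P_i:i\in I\}$, forms the descending chain of partial intersections $T_n=\bigcap_{i\le n}P_i$, and then invokes DCC to see that this chain stabilizes at the desired intersection.
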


\begin{proof}
First note that for $P,Q\in\mathcal P$, we have $P\cap Q\in\mathcal P$ by \autoref{T:intersection_of_parabolic_is_parabolic}, so $\pi(P\cap Q)$ is defined.

The set~$\mathcal P_\pi$ is partially ordered by inclusion.
Assume that~$\mathcal P_\pi$ is nonempty.
We will show that
$$
   R=\bigcap_{P\in \mathcal P_\pi}{P}
$$
is the unique minimal element in~$\mathcal P_\pi$.
It is clear by definition that~$R$ is contained in every element of~$\mathcal P_\pi$, hence it just remains to show that~$R$ is an element of~$\mathcal P_\pi$.

Notice that the set~$\mathcal P$ of parabolic subgroups in~$A_S$ is a countable set, as every element $P\in \mathcal P$ can be determined by a subset $X\subseteq S$ and an element $\alpha\in A_S$ such that $\alpha^{-1}P\alpha =A_X$.
As there are a finite number of subsets of~$S$ and a countable number of elements in~$A_S$, it follows that~$\mathcal P$ is countable.

Therefore, the set~$\mathcal P_\pi$ is also countable, and we can enumerate its elements: $\mathcal P_\pi = \{P_i : i\in I\}$ for some $I\subseteq \mathbb N$.  Now let
$$
      T_n= \bigcap_{i\in I;\,  i \le n}{P_i}.
$$
By~\autoref{T:intersection_of_parabolic_is_parabolic} and the assumption on~$\pi$, the intersection of any finite number of elements of~¤$\mathcal P_\pi$ is contained in~$\mathcal P_\pi$, so we have $T_n\in \mathcal P_\pi$ for all $n\geq 0$.

We then have the following descending chain of elements of~$\mathcal P_\pi$
$$
   T_0\supseteq T_1 \supseteq T_2 \supseteq \cdots
$$
where the intersection of all the parabolic subgroups in this chain equals~$R$.

We finish the proof by noticing that in~$A_S$ there cannot be an infinite chain of distinct nested parabolic subgroups, as if $\alpha^{-1}A_X\alpha \subsetneq \beta^{-1} A_Y \beta$ then $|X|<|Y|$. Hence, there can be at most $|S|+1$ distinct nested parabolic subgroups in any chain. Therefore, there exists $N\geq 0$ such that $T_N=T_{N+k}$ for every $k>0$, and then
$
    R=\bigcap_{i=0}^{\infty}{T_i}=T_N,
$
so $R$ is an element of~$\mathcal P_\pi$.
\end{proof}

\begin{example}\label{R:minimalContainingParabolic}
Let~$A_S$ be an Artin--Tits group of spherical type and $\alpha\in A_S$.
Applying \autoref{P:closedPredicate} with the predicate $\pi(P) = (\alpha\in P)$, we see that the parabolic closure~$P_\alpha$ of~$\alpha$, which has been shown to be the minimal parabolic subgroup containing~$\alpha$ (\autoref{P:minimalContainingParabolic}), is also the intersection of all parabolic subgroups containing~$\alpha$.
\end{example}

\begin{theorem}\label{T:lattice}
The set of parabolic subgroups of an Artin--Tits group of spherical type is a lattice with respect to the partial order determined by inclusion.
\end{theorem}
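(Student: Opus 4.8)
The plan is to verify directly that the poset $(\mathcal P,\subseteq)$ of parabolic subgroups of $A_S$ has binary meets and binary joins. All the real work has already been done in \autoref{T:intersection_of_parabolic_is_parabolic} and \autoref{P:closedPredicate}; what remains is to assemble these two ingredients.

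\emph{Meets.} Given two parabolic subgroups $P$ and $Q$, I would take $P\wedge Q = P\cap Q$. By \autoref{T:intersection_of_parabolic_is_parabolic} this is again a parabolic subgroup, and it is contained in both $P$ and $Q$. Conversely, any parabolic subgroup $R$ with $R\subseteq P$ and $R\subseteq Q$ satisfies $R\subseteq P\cap Q$, so $P\cap Q$ is indeed the greatest lower bound of $P$ and $Q$ in $\mathcal P$.

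\emph{Joins.} Given $P$ and $Q$, I would apply \autoref{P:closedPredicate} to the predicate $\pi(R)=\bigl(P\subseteq R\ \text{and}\ Q\subseteq R\bigr)$ on $\mathcal P$. The set $\mathcal P_\pi$ is nonempty, since $A_S$ is itself a standard parabolic subgroup containing both $P$ and $Q$. Moreover $\pi$ satisfies the closure hypothesis of \autoref{P:closedPredicate}: if $\pi(R_1)$ and $\pi(R_2)$ hold, then $R_1\cap R_2$ is a parabolic subgroup by \autoref{T:intersection_of_parabolic_is_parabolic} (so $\pi(R_1\cap R_2)$ is defined) and it contains both $P$ and $Q$, so $\pi(R_1\cap R_2)$ holds. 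Hence \autoref{P:closedPredicate} yields a unique minimal element of $\mathcal P_\pi$, namely $\bigcap_{R\in\mathcal P_\pi}R$; by construction this is the smallest parabolic subgroup containing both $P$ and $Q$, i.e.\ the least upper bound $P\vee Q$.

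Since every pair of parabolic subgroups has a meet and a join in $\mathcal P$, the set of parabolic subgroups is a lattice with respect to inclusion. The only nontrivial input is \autoref{T:intersection_of_parabolic_is_parabolic}; I do not expect any genuine obstacle here, as the verification of the hypotheses of \autoref{P:closedPredicate} for the join predicate is immediate and the meet is literally the intersection.
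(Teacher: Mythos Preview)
Your proof is correct and follows essentially the same approach as the paper: meets are given by intersection via \autoref{T:intersection_of_parabolic_is_parabolic}, and joins are obtained by applying \autoref{P:closedPredicate} to the predicate ``contains both $P$ and $Q$'' (the paper phrases it as $P\cup Q\subseteq T$, which is equivalent to your formulation). Your explicit verification of the closure hypothesis for the join predicate is a small elaboration the paper leaves implicit, but the argument is the same.
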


\begin{proof}
Let $A_S$ be an Artin--Tits group of spherical type, and let~$\mathcal P$ be the set of parabolic subgroups of~$A_S$. This set is partially ordered by inclusion.
Now assume that $P,Q\in \mathcal P$ are given.

By~\autoref{T:intersection_of_parabolic_is_parabolic}, $P\cap Q$ is the unique maximal parabolic subgroup among those parabolic subgroups contained in both~$P$ and~$Q$.

Applying \autoref{P:closedPredicate} with the predicate $\pi(T) = (P\cup Q\subseteq T)$ shows that there is a unique minimal parabolic subgroup among those parabolic subgroups containing both~$P$ and~$Q$.
Notice that $\mathcal P_{\pi}$ is nonempty as $A_S\in \mathcal P_\pi$.
\end{proof}

\section{Adjacency in the complex of irreducible par\-abolic subgroups}\label{S:annexe}

We postponed to this section the proof of the following result, which characterizes the pairs of adjacent subgroups in the complex of irreducible parabolic subgroups.

{\bf \autoref{centers_commute=3_conditions}}.
Let~$P$ and~$Q$ be two distinct irreducible parabolic subgroups of an Artin--Tits group~$A_S$ of spherical type.
Then $z_Pz_Q=z_Qz_P$ holds if and only if one of the following three conditions is satisfied:
\begin{enumerate}

\item $P\subsetneq Q$.

\item $Q\subsetneq P$.

\item $P\cap Q=\{1\}$ and $xy=yx$ for every $x\in P$ and $y\in Q$.

\end{enumerate}

\begin{proof}
If $P\subsetneq Q$ then $z_P\in Q$ and~$z_Q$ is central in~$Q$, so both elements commute.
Similarly, if $Q\subsetneq P$ then~$z_P$ and~$z_Q$ commute.
Also, if the third condition is satisfied every element of~$P$ commutes with every element of~$Q$, so~$z_P$ and~$z_Q$ commute.

Conversely, assume that~$z_P$ and~$z_Q$ commute.
We can assume $\{1\}\neq P\subsetneq A_S$ and $\{1\}\neq Q\subsetneq A_S$, as otherwise either Condition~1 or Condition~2 holds.
We are going to prove a result which is slightly stronger than what is required:
We shall show that~$P$ and~$Q$ can be \emph{simultaneously} conjugated to standard irreducible parabolic subgroups~$A_X$ and~$A_Y$ (for some subsets $X, Y\subseteq S$); moreover,  one of the following holds:
\begin{enumerate}
\item $X\subsetneq Y$.
\item $Y\subsetneq X$.
\item $X\cap Y=\emptyset$, and all elements of~$X$ commute with all elements of~$Y$.
\end{enumerate}

Notice that the four properties listed in the statement of \autoref{centers_commute=3_conditions} are preserved by conjugation.
Hence, up to conjugation, we can assume that~$P$ is standard.

We decompose~$z_Q$ in $pn$-normal form. Namely, $z_Q=ab^{-1}$ where~$a$ and~$b$ are positive elements such that $a \wedge^{\Lsh} b=1$ (where~$\wedge^{\Lsh}$ means the greatest common suffix in~$A_S$). The suffix order is preserved by right multiplication, so we can right-multiply the above equation by~$b^{-1}$, and we have $(ab^{-1}) \wedge^{\Lsh} 1=b^{-1}$, that is, $z_Q \wedge^{\Lsh} 1=b^{-1}$.

Now~$z_Q$ commutes with~$z_P$, and~$1$ also commutes with~$z_P$. Hence $z_Q z_P z_Q^{-1}$ and $1 z_P (1)^{-1}$ are positive elements (we are assuming that~$P$ is standard, so~$z_P$ is positive). It follows by convexity (\autoref{L:convexity} applied to the suffix order) that $(z_Q \wedge^{\Lsh} 1)z_P (z_Q \wedge^{\Lsh} 1)^{-1}$ is positive, that is, $b^{-1}z_P b$ is positive.

But we know from \autoref{C:conjugators_of_positive_elements} (see also \cite[Corollary~1]{Cum}) that each positive conjugate of~$z_P$ is the generator of the center of a proper standard irreducible parabolic subgroup. That is, $b^{-1}z_P b=z_X$ for some $X\subsetneq S$.

On the other hand, it is shown in~\cite[Theorem~3]{Cum} that if $z_Q=ab^{-1}$ is in $pn$-normal form, then~$b$ is the minimal standardizer of~$Q$, that is,~$b$ is the smallest positive element that conjugates~$Q$ to a standard parabolic subgroup, so $b^{-1}z_Qb = z_Y$ for some $Y\subsetneq S$.

Therefore, when conjugating both~$z_P$ and~$z_Q$ by~$b$, we obtain elements~$z_X$ and~$z_Y$, generators of the centers of proper standard parabolic subgroups. We can then assume, up to conjugacy, that~$P$ and~$Q$ are both proper standard irreducible parabolic subgroups.

Now we will need the following:

{\bf Claim:} Let~$A_S$ be an arbitrary Artin--Tits group. Let $s_0,\ldots, s_k\in S$ be standard generators such that~$s_i$ does not commute with~$s_{i+1}$, and $s_i\neq s_{i+2}$ for every~$i$. If an element $\alpha\in A_S^+$ is represented by a positive word~$w$ which contains the subsequence $s_0s_1\cdots s_k$, then all positive words representing~$\alpha$ contain the same subsequence.

{\it Proof of the claim:}  It is known~\cite{Par} that Artin monoids inject in their groups. This implies that every positive word representing~$\alpha$ is obtained from~$w$ after a finite sequence transformations, each one replacing a subword~$sts\cdots$ (having~$m(s,t)$ letters) with~$tst\cdots$ (also having~$m(s,t)$ letters). It suffices to show that the word obtained from~$w$ after a single transformation contains the subsequence $s_0\cdots s_k$. If $m(s,t)=2$, the transformation replaces~$st$ with~$ts$. But the subword~$st$ can intersect the subsequence $s_0\cdots s_k$ in at most one letter (as~$s_i$ and~$s_{i+1}$ do not commute for every~$i$), hence the subsequence survives after the transformation. If $m(s,t)\geq 3$ then the subword $sts\cdots$ intersects the subsequence $s_0\cdots s_k$ of~$w$ in at most two consecutive letters (as $s_i\neq s_{i+2}$ for every~$i$). This intersection is either $(s_{i},s_{i+1})=(s,t)$ or $(s_{i},s_{i+1})=(t,s)$. In either case, the subsequence survives after the transformation, as~$tst\cdots$ contains both possible subsequences. This shows the claim.
\medskip

Recall that we are assuming that~$P$ and~$Q$ are distinct nonempty proper irreducible standard parabolic subgroups of~$A_S$, so $P=A_X$ and $Q=A_Y$, where $X,Y\subsetneq S$ and~$\Gamma_X$ and~$\Gamma_Y$ are connected graphs. We are also assuming that~$z_P$ and~$z_Q$ (that is,~$z_X$ and~$z_Y$) commute. We will further assume that none of the three conditions in the statement holds, and we will arrive at a contradiction.

If $X\cap Y=\emptyset$, Condition~3 not being satisfied implies the existence of $a\in X$ and $b\in Y$ that are adjacent in~$\Gamma_S$.

Otherwise, as Condition~1 is not satisfied and~$\Gamma_X$ is connected, there exist $a\in X\setminus Y$ and $s_1 \in X\cap Y$ that are adjacent in~$\Gamma_S$.
Moreover, as Condition~2 is not satisfied and~$\Gamma_Y$ is connected, there are $b\in Y\setminus X$ and a simple path $s_1,s_2,\ldots,s_k=b$ in~$\Gamma_Y$.

In either case, we have a path $a=s_0,s_1,\ldots,s_k=b$ satisfying the hypothesis of the above claim, where $a\in X\setminus Y$, $b\in Y\setminus X$, and $s_1,\ldots,s_k\in Y$.

% Since Condition~1 is not satisfied, $X\not\subseteq Y$. Moreover, since Condition~3 is not satisfied and~$\Gamma_X$ is connected, there must be some $a\in X\setminus Y$ which is adjacent in~$\Gamma_S$ to some $s\in Y$. Also, since Condition~2 is not satisfied there is some $b\in Y\setminus X$, and since~$\Gamma_Y$ is connected, there is a simple path $s=s_1,s_2,\ldots,s_k=b$ in~$\Gamma_Y$. So we have a path $a=s_0,s_1,\ldots,s_k=b$ satisfying the hypothesis of the above claim.

Now consider the element~$z_Pz_Q$, which is equal to~$z_Xz_Y$. It is a positive element, and any representative of~$z_X$ involves the letter~$a$. On the other hand, let us denote $A_i=\{s_1,\ldots,s_i\}$ for $i=1,\ldots,k$. Then
$\Delta_{A_k} \preccurlyeq \Delta_Y \preccurlyeq z_Y$, and we have a decomposition
$$
   \Delta_{A_k}=\Delta_{A_1}r_{A_1,s_2}r_{A_2,s_3}\cdots r_{A_{k-1},s_k}
$$
where the product of the~$i$ leftmost factors is precisely~$\Delta_{A_i}$, for $i=1,\ldots,k$. Now notice that $s_1=\Delta_{A_1}$ and recall that~$s_i$ is the first letter of $r_{A_{i-1},s_i}$ (by \autoref{L:ribbon_prefix}) for $i=2,\ldots,k$. Therefore, the sequence $s_0,\ldots,s_k$ is a subsequence of~$z_Xz_Y$.

From the above claim, it follows that every positive word representing~$z_Xz_Y$ must contain $s_0,\ldots,s_k$ as a subsequence. Now choose a representative of~$z_Yz_X$ which is the concatenation of a word representing~$z_Y$ and a word representing~$z_X$. In such a representative, each instance of the letter~$b$ appears to the left of each instance of the letter~$a$. Therefore, this word does not contain $s_0,\ldots,s_k$ as a subsequence. Hence $z_Xz_Y\neq z_Yz_X$, that is, $z_Pz_Q\neq z_Qz_P$.
The latter is a contradiction which finishes the proof.
\end{proof}

%%%%%%%%%%%%%%%%%%%%%%%%%%%%%%%%%%%%%%%%%%%%%%%%%%%%%%%%%%%%%%%%%%%%%%
% bibliography
%%%%%%%%%%%%%%%%%%%%%%%%%%%%%%%%%%%%%%%%%%%%%%%%%%%%%%%%%%%%%%%%%%%%%%
% \def\bibspacing{0.45ex}
\bibliographystyle{plain}
\bibliography{parabolic_subgroups}

{\bf Mar\'{\i}a Cumplido.}\\
maria.cumplido.cabello@gmail.com\\
Univ Rennes, CNRS, IRMAR - UMR 6625, F-35000 Rennes (France).\\
\& Depto. de \'Algebra. Instituto de Matem\'aticas (IMUS). \\
Universidad de Sevilla. Av. Reina Mercedes s/n, 41012 Sevilla (Spain).

\medskip

{\bf Volker Gebhardt.}\\
 v.gebhardt@westernsydney.edu.au\\
 Western Sydney University \\
 Centre for Research in Mathematics\\
 Locked Bag 1797, Penrith NSW 2751, Australia

\medskip

{\bf Juan Gonz\'alez-Meneses.}\\
meneses@us.es\\
Depto. de \'Algebra. Instituto de Matem\'aticas (IMUS). \\
Universidad de Sevilla.  Av. Reina Mercedes s/n, 41012 Sevilla (Spain).

\medskip

{\bf Bert Wiest.}\\
bertold.wiest@univ-rennes1.fr\\
Univ Rennes, CNRS, IRMAR - UMR 6625, F-35000 Rennes (France).

\end{document}